\documentclass[12pt]{amsart}
\usepackage[margin=1.25in]{geometry}

\usepackage{amsmath}
\usepackage{amsfonts}
\usepackage{amssymb}
\usepackage{amscd}
\usepackage{graphicx}
\usepackage[abbrev,alphabetic]{amsrefs}
\RequirePackage[dvipsnames,usenames]{color}
\usepackage{soul,xcolor}
\setstcolor{red}
\usepackage{stmaryrd}
\usepackage{multicol}

\RequirePackage{hyperref}

\newcommand\hidden[1]{} 
\usepackage{mathtools}
\newtagform{hidden}[\hidden]{}{}

\usepackage{hyperref}

\usepackage{amsthm}
\usepackage{comment}
\usepackage[all,cmtip]{xy}
\usepackage{tikz-cd}
\usetikzlibrary{cd}

\makeatletter
\def\@tocline#1#2#3#4#5#6#7{\relax
  \ifnum #1>\c@tocdepth 
  \else
    \par \addpenalty\@secpenalty\addvspace{#2}%
    \begingroup \hyphenpenalty\@M
    \@ifempty{#4}{%
      \@tempdima\csname r@tocindent\number#1\endcsname\relax
    }{%
      \@tempdima#4\relax
    }%
    \parindent\z@ \leftskip#3\relax \advance\leftskip\@tempdima\relax
    \rightskip\@pnumwidth plus4em \parfillskip-\@pnumwidth
    #5\leavevmode\hskip-\@tempdima
      \ifcase #1
       \or\or \hskip 1em \or \hskip 2em \else \hskip 3em \fi%
      #6\nobreak\relax
    \hfill\hbox to\@pnumwidth{\@tocpagenum{#7}}\par
    \nobreak
    \endgroup
  \fi}
\makeatother

\hypersetup{
bookmarks,
bookmarksdepth=3,
bookmarksopen,
bookmarksnumbered,
pdfstartview=FitH,
colorlinks,backref,hyperindex,
linkcolor=Sepia,
anchorcolor=BurntOrange,
citecolor=MidnightBlue,
citecolor=OliveGreen,
filecolor=BlueViolet,
menucolor=Yellow,
urlcolor=OliveGreen
}

\newsavebox{\pullback}
\sbox\pullback{%
\begin{tikzpicture}%
\draw (0,0) -- (1ex,0ex);%
\draw (1ex,0ex) -- (1ex,1ex);%
\end{tikzpicture}}

\newsavebox{\pullbackdl}
\sbox\pullbackdl{%
\begin{tikzpicture}%
\draw (-1ex,0ex) -- (0ex,0ex);%
\draw (0ex,-1ex) -- (0ex,0ex);%
\end{tikzpicture}}

\newsavebox{\pushoutdr}
\sbox\pushoutdr{%
\begin{tikzpicture}%
\draw (-1ex,-1ex) -- (-1ex,0ex);%
\draw (-1ex,0ex) -- (0ex,0ex);%
\end{tikzpicture}}

\newcommand{\Frac}{\mathrm{Frac}}
\newcommand{\Proj}{\mathrm{Proj}}

\DeclareMathOperator{\Spec}{Spec}

\DeclareMathOperator{\Hom}{Hom}
\DeclareMathOperator{\Ext}{Ext}

\DeclareMathOperator{\Pic}{Pic}
\DeclareMathOperator{\Br}{Br}

\DeclareMathOperator{\im}{Im}

\DeclareMathOperator{\Sym}{Sym}
\DeclareMathOperator{\pdeg}{p-deg}

\newcommand*{\coloneq}{\mathrel{\mathop:}=}

\theoremstyle{plain}
\newtheorem{theorem}{Theorem}[section]
\newtheorem*{theorem*}{Theorem}

\newtheorem{proposition}[theorem]{Proposition}
\newtheorem{lemma}[theorem]{Lemma}

\newtheorem*{claim*}{Claim}

\theoremstyle{definition}
\newtheorem{definition}[theorem]{Definition}

\newtheorem{example}[theorem]{Example}
\newtheorem*{setup*}{Setup}

\theoremstyle{remark}
\newtheorem{remark}[theorem]{Remark}

\numberwithin{equation}{theorem}



\newif\ifshowColoursAndTodoes
\showColoursAndTodoestrue

\ifshowColoursAndTodoes
\def\todo#1{\textcolor{Mahogany}%
{\footnotesize\newline{\color{Mahogany}\fbox{\parbox{\textwidth-15pt}{\textbf{todo: } #1}}}\newline}}
\def\commentbox#1{\textcolor{Mahogany}%
{\footnotesize\newline{\color{Mahogany}\fbox{\parbox{\textwidth-15pt}{\textbf{comment: } #1}}}\newline}}

\else
\def\todo#1{}
\def\commentbox#1{}
\renewcommand{\st}[1]{}
\colorlet{red}{black!100} 
\colorlet{teal}{black!100} 
\colorlet{brown}{black!100} 
\colorlet{blue}{black!100} 
\colorlet{magenta}{black!100} 
\colorlet{purple}{black!100} 
\colorlet{cyan}{black!100} 
\fi

\title[Brauer--Manin obstruction from failure of Kodaira Vanishing
]{Brauer--Manin Obstruction from Failure of Kodaira Vanishing in Positive Characteristic
}

\author{Fabio Bernasconi} 
\address{Dipartimento di Matematica “Guido
Castelnuovo”, SAPIENZA Università di Roma, Piazzale Aldo Moro 5, I-00185,
Roma} 
\email{fabio.bernasconi@uniroma1.it}

\author{Domenico Valloni} 
\address{\'Ecole Polytechnique F\'ed\'erale de Lausanne, SB MATH CAG, MA C3 (B\^atiment MA), Station 8, CH-1015 Lausanne, Switzerland}
\email{domenico.valloni@epfl.ch}

\begin{document}

\subjclass[2020]{14G12, 14F22, 14G17, 14F17, 14J20}   

\keywords{Rational points, Brauer--Manin set, Kodaira vanishing theorem, positive characteristic}
\maketitle

\begin{abstract}
We study the Brauer-Manin set of smooth projective surfaces over global fields of positive characteristic for which Kodaira vanishing fails. Our results apply in particular to Raynaud surfaces.
\end{abstract}

\setcounter{tocdepth}{2}

\section{Introduction}
Let $X$ be a geometrically connected smooth projective variety over a global field $K$, and let $\mathbf A_K$ denote the ring of adeles of $K$. A central problem in Diophantine geometry is to understand how the set of rational points $X(K)$ sits in the adelic space $X(\mathbf A_K)$. The Brauer--Manin pairing gives a canonical subset
\[
X(\mathbf A_K)^{\Br(X)}\subseteq X(\mathbf A_K)
\]
which contains the diagonal image of $X(K)$, and whose failure to be all of $X(\mathbf A_K)$ gives an obstruction to the Hasse principle and to weak approximation (for a survey, see \cite[Chapter 13]{Brauer--Groth--book}). 

In this paper we study a phenomenon specific to positive characteristic: we show that a sufficiently strong failure of Kodaira vanishing for surfaces gives rise to an explicit description of the Brauer-Manin set cut-out by the $p$-torsion Brauer group. More precisely,  assume that $K$ is a global function field, and that $X$ descends to a smooth projective surface $\widetilde X$ over $K^p \subset K$. Suppose now that $\widetilde{ \mathcal A}$ is an ample line bundle on $\widetilde X$ for which Kodaira vanishing fails, and that moreover $\widetilde{\mathcal A}^{\otimes p}$ is globally generated and its linear system separates points. The failure of Kodaira vanishing then produces a regular $p$-closed foliation $\widetilde{\mathcal F}$ on $\widetilde X$ and hence a purely inseparable morphism $$ \phi: Y \coloneq \widetilde{X} / \widetilde{\mathcal F}\to X $$ of degree $p$ by Proposition \ref{prop: factorization_frob}. 

\begin{theorem}[See Theorem \ref{thm: criterio_Kodaira_adelic_rational}] \label{thm: intro1}
In the situation above, if $(x_v)\in X(\mathbf A_K)^{\Br(X)[p]}$ does not lift to an adelic point of $Y$, then $(x_v)\in X(K)$.
\end{theorem}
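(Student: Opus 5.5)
The plan is to realize the obstruction to lifting $(x_v)$ to $Y$ as a Brauer class and then use the Brauer--Manin pairing together with the separation-of-points hypothesis.

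The map $\phi\colon Y\to X$ is purely inseparable of degree $p$, and $Y$ sits between $X^{(1/p)}\simeq\widetilde X\otimes K$ and $X$, since $F$ factors through $\phi$ by Proposition \ref{prop: factorization_frob}. Locally, $\phi$ is an $\alpha_p$-type cover: $\mathcal O_Y=\mathcal O_X[t]/(t^p-f)$ for local functions $f$. Globally, the foliation $\widetilde{\mathcal F}$ is a line bundle, and $Y$ is determined by a class in $H^1(X,\mathcal O_X)$ or $H^0(X,\Omega^1_X)$-type data. I would make this precise as follows. The failure of Kodaira vanishing gives a nonzero class $\xi\in H^1(\widetilde X,\widetilde{\mathcal A}^{-1})$ with $F^*\xi=0$. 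Hence $\xi$ comes from a section $s\in H^0(\widetilde X,F_*\mathcal O/\mathcal O\otimes \widetilde{\mathcal A}^{-1})$, equivalently a closed form $\omega=ds$ with values in $\widetilde{\mathcal A}^{-p}$, whose kernel is $\widetilde{\mathcal F}$.

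For each section $g\in H^0(\widetilde X,\widetilde{\mathcal A}^{p})$, the product $g\cdot s$ gives a function modulo $p$-th powers, i.e. a class in $H^0(X,\mathcal O_X/\mathcal O_X^p)$. Via the exact sequence $0\to \mathcal O_X^{*}\xrightarrow{p}\mathcal O_X^*\to \mathcal O_X^*/\mathcal O_X^{*p}\xrightarrow{\mathrm{dlog}}Z\Omega^1\to\cdots$, a global closed $1$-form on $X$ determines an element of $\Br(X)[p]$. Concretely, on opens where $g s = f$, one gets a cyclic algebra $[f,h)$ built from $f$ and a function $h\in K$. Thus for each $g$ and each $c\in K$ one gets classes $\beta_{g,c}\in\Br(X)[p]$.

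Evaluating at a local point $x_v$: $\beta_{g,c}(x_v)$ is the symbol $[c, f(x_v))_v$. By local reciprocity this is trivial for all $c$ if and only if $f(x_v)\in K_v^p$, i.e. $x_v$ lifts to $Y$ along this chart. The condition $(x_v)\in X(\mathbf A_K)^{\Br(X)[p]}$ means $\sum_v [c,f(x_v))_v=0$ for all $c\in K$.

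Since $(x_v)$ does not lift to $Y(\mathbf A_K)$, some $x_{v_0}$ does not lift, and then $f(x_{v_0})\notin K_{v_0}^p$. The key input is a strong-approximation/duality statement for $K/K^p$. The family of functions $v\mapsto f_g(x_v)$ gives an adele of $\mathbf A_K/\mathbf A_K^p$ orthogonal to all $c\in K$ under the Artin--Schreier/residue pairing. By the duality $(\mathbf A_K/K)^\vee\simeq \Omega_K$ (Tate), this forces the adelic point to be $K$-rational modulo $p$-th powers: $f_g(x_v)\in K+\mathbf A_K^p$ coherently. Using the Frobenius twist, $x_v$ corresponds to a point $\tilde x_v$ of $\widetilde X(K_v^p)$ when restricted along $X^{(1/p)}\to X$ where it lifts. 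Since $\widetilde{\mathcal A}^{p}$ is globally generated and separates points, the values $[g_0:\dots:g_N](x_v)$ determine $x_v$. So rationality of all these coordinates, which I would deduce from the orthogonality for all $g$ simultaneously, gives $x_v$ equal to the image of a single $K$-point, hence $(x_v)\in X(K)$.

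I expect the main obstacle to be this last step. One must pass from ``each symbol sums to zero'' to ``the coordinates are global,'' using the nonliftability at some place to rule out the degenerate (all-$p$-th-power) case. One also has to handle places where local coordinates are undefined by normalizing with a nonvanishing section.
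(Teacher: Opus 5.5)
Your strategy matches the paper's in outline. Non-liftability of some $x_{v_0}$ is turned into non-vanishing of a pulled-back form: by Theorem~\ref{prop: equivalence-star-condition}, an $L$-point lifts to $Y$ iff $x^*$ kills $H^0(X,\mathcal A^{\otimes p})\subset H^0(X,\Omega^1_X)$. Then $p$-torsion Brauer classes coming from global $1$-forms, together with residue duality, force $\psi_{\mathcal A^{\otimes p}}((x_v))\in\mathbb P^N(K)$; this last step is \cite[Theorem 1.4]{Val24}, which the paper quotes.

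\textbf{The Brauer classes you use are not well defined.} Your $s$ is a section of $F_*\widetilde{\mathcal A}^{-p}/\widetilde{\mathcal A}^{-1}$. So $g\cdot s$ is a function modulo $p$-th powers only when $g=\sigma^p$ with $\sigma\in H^0(\widetilde{\mathcal A})$. For a general $g\in H^0(\widetilde{\mathcal A}^{\otimes p})$ the only meaningful object is the global $1$-form $g\omega$, locally $g_0\,dh$. This form is in general not closed, since $d(g_0\,dh)=dg_0\wedge dh$, so it is not the differential of any function.

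The symbols also fail to glue. $[c,f)$ corresponds to $c\,df/f$ and depends on $f$ modulo \emph{multiplicative} $p$-th powers. $[f,c)$ depends on $f$ modulo $\wp(\mathcal O)$, and $[a^p,c)=[a,c)$. Neither glues from an $f$ known only modulo additive $p$-th powers.

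The correct input is the map $H^0(X,\Omega^1_X)\to\Br(X)[p]$ (locally $a\,db/b\mapsto[a,b)$), applied to $c\,g\omega$ with $c\in K$. Non-closed forms are allowed, and the local invariant at $x_v$ is $\mathrm{Tr}\,\mathrm{Res}_v(c\,x_v^*(g\omega))$. If you instead restrict to $g=\sigma^p$, you only see $\psi_{\mathcal A}$. You would then need $\mathcal A$ itself to be globally generated and to separate points. That is far stronger than the hypothesis, and it fails for the Raynaud surfaces of Section~\ref{sec: Raynaud}, where $T$ lies in the base locus of $|\mathcal A|$.

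\textbf{The decisive step is left open.} It also cannot be done with functions modulo $p$-th powers, because such classes cannot be divided to produce homogeneous coordinates. The division has to be done with $1$-forms, using that $\Omega^1_{K_v}$ and $\Omega^1_K$ are one-dimensional. Concretely:
\begin{enumerate}
\item Tate's residue duality says $K^\perp=\Omega^1_K$ inside $\Omega^1_{\mathbf A_K}$. So orthogonality to all classes $c\,g_i\omega$, $c\in K$, means the adelic form $(x_v^*(g_i\omega))_v$ is a single $\eta_i\in\Omega^1_K$, for each $g_i$ in a basis of $H^0(X,\mathcal A^{\otimes p})$.
\item If $e$ is a local frame of $\mathcal A^{\otimes p}\subset\Omega^1_X$ near $x_v$, then $x_v^*(g_i\omega)=(g_i/e)(x_v)\,x_v^*(e)$. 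Hence $\psi_{\mathcal A^{\otimes p}}(x_v)=[\eta_0:\dots:\eta_N]$ whenever $x_v^*(e)\neq0$.
\item Non-liftability at $v_0$ plus global generation gives some $\eta_i\neq0$. Since $\eta_i$ is global, it is nonzero in every $\Omega^1_{K_v}$. This forces $x_v^*(e)\neq0$ at \emph{every} place, so all $x_v$ map to one $K$-point.
\item Separation of points, together with separability of $K_v/K$, then gives $(x_v)\in X(K)$.
\end{enumerate}

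\textbf{The liftability criterion is asserted, not proved.} Your claim that $x_{v_0}$ lifts iff $f(x_{v_0})\in K_{v_0}^p$ presupposes that locally $\mathcal O_Y=\mathcal O_X[t]/(t^p-h)$, with $dh$ generating the image of $\mathcal A\to B^1_X$. This needs the descent of $\mathcal A\to B^1_X$ to $\widetilde X$. It is exactly the content of Theorem~\ref{prop: equivalence-star-condition}, and you need to prove it.
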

 Since $\phi$ is purely inseparable and finite of degree $p$, the subset $\phi(Y(\mathbf A_K))\subset X(\mathbf A_K)$ is closed and has measure zero. Thus, outside a measure-zero exceptional locus, every adelic point surviving the obstruction is rational.

We then apply this criterion to Raynaud surfaces, originally constructed as the first counterexample to Kodaira vanishing \cite{Ray78}. Although they are surfaces of general type, they exhibit several characteristic-$p$ pathologies: they are uniruled, their Picard schemes are non-reduced, their Hodge-to-de Rham spectral sequence does not degenerate, and they are Shioda supersingular while carrying nonzero global $2$-forms.

\begin{theorem}
There are infinitely many Raynaud surfaces $X$ such that, if $\mathcal A\in \Pic(X)$ is the line bundle exhibiting the failure of Kodaira vanishing, then $\mathcal A^{\otimes p}$ is globally generated and its linear system separates points. In particular, Theorem \ref{thm: intro1} applies when both $X$ and $ \mathcal A$ descend to $K^p \subset K$. 
\end{theorem}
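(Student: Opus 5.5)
Recall Raynaud's construction. Start with a Tango curve $C$ over an algebraically closed field $k$ (or a suitable base) of characteristic $p$. Such a curve comes with a line bundle $L$ of degree $(2g-2)/p$ such that $L^{\otimes p}\cong \omega_C$ is given by $df$ for some rational function $f$. Then:
\begin{enumerate}
\item Build a rank-two bundle $E$ as an extension $0\to \mathcal O_C\to E\to L\to 0$ using the Tango class.
\item Let $P=\mathbb P(E)$, and inside it take two disjoint sections $C'$ and $C''$.
\item Form a cyclic cover $X\to P$ of degree $n$, with $n\mid p+1$ or $n\mid p-1$, branched along $C'\cup C''$.
\end{enumerate}
The counterexample line bundle is the pullback $\mathcal A=\pi^*\mathcal O_P(C')\otimes(\text{fiber twist})$, of the form $\mathcal A=\mathcal O_X(D)$ where $D$ is the reduced preimage of $C'$ plus fibers. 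One has $H^1(X,\mathcal A^{-1})\neq 0$.

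My plan is to make this explicit and verify that $\mathcal A^{\otimes p}$ is very ample. The infinitude comes from choosing infinitely many Tango curves, e.g.\ the standard family $y^{p^e}-y=x^{m}$, or allowing $p$, $n$, and the genus to vary, together with large enough twists by fibers. The descent to $K^p$ is automatic for curves defined over $\mathbb F_p$ or $\mathbb F_q$ after base change to $K$: since $K^p\supset\mathbb F_q$, all the data, including $\mathcal A$, are defined over a finite field and hence over $K^p$.

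The key steps, in order, are as follows.

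\textbf{Step 1: compute $\mathcal A^{\otimes p}$ on $P$-level data.} Using the Frobenius relation $L^p\cong\omega_C$, the divisor $pC'$ becomes linearly equivalent to $C''$ plus pullbacks of divisors from $C$. More precisely, in the Raynaud setup one has $C''\sim pC'-\rho^*(\ldots)$, since $C''$ is the image of the Frobenius-twisted section. Hence $\mathcal A^{\otimes p}$ is the pullback to $X$ of $\mathcal O_P(M)$ with $M=pC'+\rho^*N$, where $N$ has large degree on $C$.

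\textbf{Step 2: show $\mathcal O_P(M)$ is very ample on $P$.} On a ruled surface, $\mathcal O_{\mathbb P(E)}(a)\otimes\rho^*N$ is very ample once $a\ge1$ and $\deg N$ exceeds an explicit bound, computed from the degrees of the quotient line bundles of $\Sym^aE$. This follows from the standard criterion that $\Sym^a E\otimes N$ be very ample, which holds if every quotient has degree $\ge 2g+1$. Since $E$ is an extension of $L$ by $\mathcal O$, the minimal quotient degree of $\Sym^a E$ is $\ge 0$, so $\deg N\ge 2g+1$ suffices.

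\textbf{Step 3: pass from $P$ to $X$.} The map $\pi:X\to P$ is finite, so $\pi^*$ of a globally generated bundle is globally generated, which settles global generation. Point separation is the delicate part, because $\pi$ is not injective: pulled-back sections do not separate points of a fiber of $\pi$. I would therefore use more sections than the pulled-back ones. Write $\pi_*\mathcal O_X=\bigoplus_{i=0}^{n-1}\mathcal M^{-i}$, where $\mathcal M^{n}\cong\mathcal O_P(C'+C'')$. Then
\[
H^0(X,\mathcal A^{\otimes p})=\bigoplus_i H^0\bigl(P,\mathcal O_P(M)\otimes\mathcal M^{-i}\bigr).
\]
The $i=0$ summand separates points of $X$ lying in distinct $\pi$-fibers. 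The summand with $i=1$ consists of sections $t\cdot s$, where $t$ is the tautological $n$-th root coordinate and $s$ is a section of $\mathcal O_P(M)\otimes\mathcal M^{-1}$. These separate the points within a $\pi$-fiber, since $t$ takes the distinct values $\zeta^j t_0$ there, provided $M-\mathcal M$ is globally generated and nonvanishing at the point. That in turn holds after increasing $N$, again by Step 2.

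The main obstacle is Step 3 at the ramification curves and the identification of the right $\mathcal M$, since it must satisfy the divisibility constraints. I also need to check that the needed increase in $\deg N$ is compatible with keeping $\mathcal A$ the Kodaira-vanishing-violating bundle. I would first try to absorb the fiber twist into the original choice of $\mathcal A$: Raynaud's non-vanishing persists for $\mathcal A\otimes\rho^*N'$ with $\deg N'$ small relative to $\deg L$. Failing that, I would choose Tango curves with $\deg L$ large, e.g.\ $e$ large in the family above, so that $pC'$ alone already carries enough positivity.
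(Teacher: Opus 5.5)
Your Step 1 is false, and Steps 2 and 3 depend on it. The bundle exhibiting the failure of Kodaira vanishing is $\mathcal{A}=\mathcal{O}_X((d-1)T)\otimes f^*\mathcal{N}$. Here $T$ is the reduced preimage of the section $S$, so $\pi^*S=dT$, and $\mathcal{N}^{\otimes d}\cong\mathcal{L}$. Your two descriptions of $\mathcal{A}$ (as $\pi^*\mathcal{O}_P(S)$ twisted by fibres, and as the reduced preimage plus fibres) already disagree: one has $dT$, the other $T$.

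Since $T$ is a section of $f$, on every fibre $F$ we get $\deg(\mathcal{A}^{\otimes p}|_F)=p(d-1)$. On the other hand, any line bundle pulled back from $P$ has fibre degree divisible by $d=\deg(\pi|_F)$. Because $d\mid p+1$, $d$ is prime to both $p$ and $d-1$, so $d\nmid p(d-1)$ for $d\geq 2$. Hence $\mathcal{A}^{\otimes p}$ is not of the form $\pi^*\mathcal{O}_P(M)$ for any $M$. Consequently neither your eigenspace decomposition $\bigoplus_i H^0(P,\mathcal{O}_P(M)\otimes\mathcal{M}^{-i})$ nor the pullback argument for global generation applies to it.

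The relation between $pT$ and $\Gamma$ must be used on $X$ itself. With $\Sigma$ the reduced preimage of $\Gamma$, one has $\Sigma\sim pT-f^*\mathcal{N}^{\otimes p}$, hence
\[
\mathcal{A}^{\otimes p}\cong\mathcal{O}_X(p(d-1)T)\otimes f^*\mathcal{N}^{\otimes p}\cong\mathcal{O}_X((d-1)\Sigma)\otimes f^*\omega_C .
\]
Since $T\cap\Sigma=\emptyset$, global generation of $\mathcal{A}^{\otimes p}$ then reduces to global generation of $\mathcal{N}^{\otimes p}$ and $\omega_C$ on $C$. This is how the paper argues.

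The second gap is that $\mathcal{A}$ is rigid, so there is no fibre twist $N$ you can enlarge. $\mathcal{N}$ is fixed by the Tango structure, and $\mathcal{A}^{\otimes p}$ is the saturation of $f^*\Omega^1_{C/K}$ in $\Omega^1_{X/K}$. Replacing $\mathcal{A}$ by $\mathcal{A}\otimes f^*N'$ changes the bundle and the foliation the statement is about, and you do not prove non-vanishing for it anyway.

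The positivity has to come from the curve, and this imposes conditions your proposal never mentions:
\begin{enumerate}
\item The paper takes $d=2$, so that $\mathcal{N}^{\otimes p}$ is a theta characteristic. A theta characteristic in general need not be globally generated, or even effective, so the paper requires it to be globally generated.
\item It requires $C$ to be non-hyperelliptic, so that $\psi_{\mathcal{A}^{\otimes p}}$ restricts to the canonical embedding on $T\cong C$.
\item On each fibre, the pencil spanned by sections with divisors $pT+f^*(\cdot)$ and $\Sigma+f^*(\cdot)$ gives a purely inseparable map $F\to\mathbb{P}^1$ of degree $p$.
\end{enumerate}

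``Infinitely many'' then means producing infinitely many curves satisfying these conditions, which merely varying the Tango curve (or $p$, $e$) does not guarantee. The paper does this as follows:
\begin{enumerate}
\item Start from a hyperelliptic Tango curve $D$ of odd genus $g$, with the globally generated theta characteristic $\theta_D=\psi^*\mathcal{O}_{\mathbb{P}^1}(\tfrac{g-1}{2})$ coming from the hyperelliptic map $\psi\colon D\to\mathbb{P}^1$.
\item Pull everything back along \'etale covers $C\to D$ with $C$ non-hyperelliptic.
\item Choose $\mathcal{N}$ with $\mathcal{N}^{\otimes 2}\cong\mathcal{L}$ and $\mathcal{N}^{\otimes p}\cong\theta$, where $\theta$ is the pullback of $\theta_D$; this is possible because $p$ is odd.
\end{enumerate}
Your remark that descent to $K^p$ is automatic for data defined over a finite field is correct, but it does not repair either gap.
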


We now place our results in context with the previous literature. 
Explicit descriptions of Brauer--Manin sets over global function fields are known in several important cases, but the existing results are largely restricted to either curves or subvarieties of abelian varieties. For instance, in \cite{PV10} Poonen and Voloch proved that, for varieties of general type which are embedded in an abelian variety over a global function field, the Brauer--Manin obstruction cuts out exactly the rational points when the ambient abelian variety has no isotrivial quotient and a finiteness condition of its $p$-torsion points is satisfied. More recently, Creutz \cite{creutz2025adelicmordelllangbrauermaninobstruction} proved Conjecture C from \cite{PV10}, showing that the assumption on $p$-torsion can be dropped. Curves are studied in \cite{Creutz-Voloch-constant} and \cite{MR5000785}: for constant curves they characterize the complement of the set of rational points in the Brauer-Manin set, whereas for non-isotrivial curves they show that the equality $C(K) = C(\mathbf{A}_K)^{\mathrm{Br}(C)}$ always holds. 

Since Raynaud surfaces are not subvarieties of abelian varieties, as they are uniruled, our results give a genuinely new kind of example for which the Brauer--Manin set can be described explicitly. 
\subsection*{Plan of the article and proofs} We briefly explain the plan of the article and the strategy of the proofs. We start with the basic observation that if $H^{1}(X,\mathcal{L}^{-1})\neq 0$, but $H^1(X,\mathcal{L}^{\otimes -p}) = 0$, then there exists an inclusion $\mathcal{L} \to B^1_X$ in the sheaf of absolute locally exact forms (see Lemma \ref{lem: vanishing_criterion}). This yields by adjunction an inclusion $\mathcal{L}^{\otimes p} \subset \Omega^{1}_{X}$. If now $\mathcal{L}^{\otimes p}$ is globally generated, we can apply \cite[Theorem 1.4]{Val24}, which says that if an adelic point $(x_v) \in X(\mathbf{A}_K)^{\mathrm{Br}(X)[p]}$ satisfies the following condition \[(\star) \text{: there exists a place } v \text{ and } \omega \in H^0(\mathcal{L}^{\otimes p}) \subset H^0(\Omega^1_X) \text{ such that } x_v^* (\omega) \in \Omega^1_{K_v} \text{ is } \neq 0, \] then $\psi_{\mathcal{L}^{\otimes p}}((x_v)_v) \in \mathbb{P}^N(K)$, where $\psi_{\mathcal{L}^{\otimes p}}$ is the morphism associated to the line bundle $\mathcal{L}^{\otimes p}$. Hence, if $\mathcal{L}^{\otimes p}$ separates points as well, it follows that every adelic point in $X(\mathbf{A}_K)^{\mathrm{Br}(X)[p]}$ which also satisfies condition $(\star)$ must be global. One problem is that condition $(\star)$ is not easy to control for a general sub-bundle of $\Omega^1_X$. This is also where the failure of Kodaira vanishing plays a role: since the inclusion $\mathcal{L}^{\otimes p}$ in $\Omega^1_{X}$ comes from an inclusion $\mathcal{L} \subset B^1_{X}$, it induces a $p$-closed foliation $\mathcal{F}$ on $X$ (see Proposition \ref{prop: p-closed-foliations}).
Our main goal in the first part of the article is then to characterize the points satisfying $(\star)$ in terms of the foliation induced by $\mathcal{L}^{\otimes p} \subset \Omega^1_X$ at least when $X$ descends to the Frobenius of $K$. Proposition \ref{prop: equivalence-star-condition} and the consequent Theorem \ref{thm: criterio_Kodaira_adelic_rational} are to be considered the main results in this first part of the article. 

Having developed the above machinery, in Section \ref{sec: Raynaud} we recall the construction of Raynaud surfaces and the explicit ample line bundle $\mathcal{A}$ for which Kodaira vanishing fails. In Proposition \ref{prop: saturation} we describe the foliation associated to $\mathcal{A}^{\otimes p}$ in a purely geometric way. Finally, in Proposition \ref{prop: bpf_lin_system} and then Proposition \ref{prop: examples of good tango structures} we construct infinitely many examples for which $\mathcal{A}^{\otimes p}$ is globally generated and separates points, allowing us to apply Theorem \ref{thm: intro1} in this case. 

\subsection*{Acknowledgement.} We would like to thank T. Kawakami for useful discussions on the Bogomolov--Sommese vanishing theorem.
FB was partially supported by PZ00P2-21610 from the Swiss National Science Foundation, DV by ERC Starting Grant No. 804334.

\section{Failure of Kodaira, foliations and Brauer-Manin obstruction}
In this section we shall explain the connection between the failure of Kodaira vanishing and the Brauer-Manin obstruction.
First, we collect the notation for the article and the basic properties of the Cartier isomorphism.
We then study foliations for varieties over arbitrary fields $K$ which descends to $K^p$ and recall their connection to failure of Kodaira vanishing. The main results are Proposition \ref{prop: equivalence-star-condition} and Theorem \ref{thm: criterio_Kodaira_adelic_rational}, where we draw the connection with the Brauer--Manin set.

\subsection{Conventions and notations}
\begin{enumerate}
    \item We fix $p$ a prime number, and $q=p^e$ be a power of $p$. We denote by $\mathbb{F}_q$ be the finite field with $q$ elements.
    \item We denote by $K$ a field of characteristic $p>0$. Unless stated otherwise, our fields $K$ are all $F$-finite (i.e. $K^p \subset K$ is a finite extension). We denote its $p$-degree as $\pdeg(K) = \log_p[K:K^p]= \dim_K \Omega^1_{K/K^p}$.
    \item A variety over a field $K$ is a geometrically connected separated $K$-scheme of finite type. We say it is a curve (resp. a surface) if it has dimension 1 (resp. 2).
    \item Given a $\mathbb{F}_p$-scheme $X$, we denote by $F_X \colon X \to X$ its absolute Frobenius morphism.
    \item We say $K$ is a \emph{global function field} if $K=\Frac(C)$ is the fraction field  of a regular curve $C$ over $\mathbb{F}_q$, such that $\mathbb{F}_q$ is integrally closed in $K$.
    We say that $K$ is a global field if it is either a number field or a global function field. 
    \item Given a global field $K$, we denote by $\mathbf{A}_K$ the ring of adèles of $K$ equipped with the restricted product topology. 
    An \emph{adelic point} of a $K$-scheme $X$ is an element $x \in X(\mathbf{A}_K)$.
    \item If $X$ is a proper scheme over $K$, we have $X(\mathbf{A}_K)= \prod_{v \text{ place}} X(K_v)$ and an adelic point $x \in X(\mathbf{A}_K)$ is denoted by $(x_v) \in \prod_v X(K_v)$.
    \item Given a variety $X$, we denote by $\Br(X) \coloneqq H^2_{\text{\'et}}(X, \mathbb{G}_m)$ the Brauer group of $X$ and $\Br(X)[p]$ its subgroup of $p$-torsion classes.
    \item We denote by $X(\mathbf{A}_K)^{\Br(X)}$ (resp. $X(\mathbf{A}_K)^{\Br(X)[p]}$) the adelic points of $X$ cut out by Brauer classes (resp. $p$-torsion classes). We refer to \cite[Section 13.3]{Brauer--Groth--book} for an overview on the Brauer--Manin obstruction.
    \item For a locally free sheaf of finite rank $\mathcal{E}$ on a variety $X$, we denote its projectivisation by $\mathbb{P}_X(\mathcal{E})= \Proj_X \Sym^{\bullet} \mathcal{E}^{\vee}$.
    \item Given a globally generated line bundle $\mathcal{L}$ on a $K$-scheme $X$, we denote by $\psi_{\mathcal{L}} \colon X \to \mathbb{P}(H^0(X, \mathcal{L}))$ the natural $K$-morphism associated to $\mathcal{L}$.
    \item Let $X$ be a normal $K$-variety and let $\mathcal{F} \subset \mathcal{E}$ an inclusion of locally free sheaves on $X$. We say that $\mathcal{F} \subset \mathcal{E}$ is \emph{saturated} if the quotient $\mathcal{E}/\mathcal{F}$ is torsion-free.
\end{enumerate}
Since global fields of positive characteristic are imperfect, we briefly review the Cartier isomorphism for K\"{a}hler differential and the theory of $p$-closed foliations in the absolute setting.

\subsubsection*{Cartier isomorphism}
Given a smooth geometrically connected variety $X$ over an $F$-finite field $K$ of dimension $n$, we denote by $\Omega^1_X \coloneqq \Omega^1_{X/\mathbb{F}_p}$ the sheaf of \emph{absolute K\"{a}hler differentials}. Note that it is a coherent sheaf as $K$ is $F$-finite. As $X$ is smooth over $K$, $\Omega^1_X$ is locally free of rank $n+\pdeg(K)$.
If $(\Omega_X^{\bullet}, d)$ denotes the (absolute) de Rham complex of abelian sheaves, then $(F_*\Omega_X^{\bullet}, F_*d)$ is a complex of $\mathcal{O}_X$-modules.
We denote by $Z^i_{X} \coloneqq \ker(F_*d \colon F_* \Omega^i_X \to F_* \Omega^{i+1}_X)$ (resp. $B^i_{X} \coloneqq \text{im}(F_*d \colon F_* \Omega^{i-1}_X \to F_*\Omega^{i}_X)$) the coherent $\mathcal{O}_X$-module of absolute closed forms (resp. locally exact) of degree $i$ inside $F_*\Omega_X^i$. 
For every $i \geq 0$ we have the following short exact sequence of locally free sheaves
    \[ 0 \to Z^i_X \to F_* \Omega_X^i \to B^{i+1}_X \to 0,\]
and the Cartier isomorphism \cite{Car57, Seshadri1958-1959} which gives for every $i$
    \[ 0 \to B^i_X \to Z_X^i \xrightarrow{C} \Omega_X^i \to 0. \]

\subsubsection*{Foliations}

Given a smooth geometrically connected variety $X$ over an $F$-finite field $K$, we denote by $\Theta_X \coloneqq \Theta_{X/\mathbb{F}_p} \cong \mathrm{Hom}(\Omega^1_X, \mathcal{O}_X)$ the absolute tangent sheaf of $X$, which is naturally a sheaf of Lie algebras with the usual Lie bracket operator on derivations.
\begin{definition}
A saturated subsheaf $\mathcal{F} \subset \Theta_X$ is a \emph{foliation} if it is closed under Lie brackets. One says that $\mathcal{F}$ is \emph{$p$-closed} if it is closed under the operation of $p$-power of derivations, and that $\mathcal{F}$ is \emph{regular} if the quotient $\Theta_X/\mathcal{F}$ is locally free.    
\end{definition}

For the theory of foliations on varieties, we refer to \cite{Eke87, Mad16}.
Given a $p$-closed foliation $\mathcal{F} \subset \Theta_X$, one can form the quotient $X/\mathcal{F}$ which is locally given by $\Spec( \{ f \in \mathcal{O}_X \colon D(f) = 0 \text{ for every } D \in \mathcal{F} \})$. 
The morphism $X \rightarrow X/ \mathcal{F}$ is purely inseparable of height one and degree $p^{\mathrm{rank{\mathcal{F}}}}$. 
Moreover, if $\mathcal{F}$ is also regular on $X$, then the quotient $X/ \mathcal{F}$ is a regular scheme. We have a natural subsheaf $\Theta_{X/K} \coloneqq  \mathrm{Hom}(\Omega^1_{X/K}, \mathcal{O}_X)\subset \mathrm{Hom}(\Omega^1_X, \mathcal{O}_X)$ whose local sections are derivations $D \colon \mathcal{O}_X \rightarrow \mathcal{O}_X$ satisfying $D(K) = 0$ (i.e. $K$-linear derivations). 
In the special case where $\mathcal{F} \subset \Theta_{X/K}$, the quotient map $X \rightarrow X/ \mathcal{F}$ is a morphism of $K$-schemes. 

\subsection{Descending varieties, foliations and Condition ($\star$)}
The goal of this section is to translate the condition $(\star)$ in a more geometric terms in the case where $X$ descends to $K^p$ and the line bundle $\mathcal{L} \subset \Omega^1_X$ induces a $p$-closed foliation.
Let us start by discussing the notion of descent to $K^p$.

\begin{definition}
We say that a variety $X/K$ \emph{descends to $K^p$} if there exist a variety $\widetilde{X}/K$ and an isomorphism
$\varphi_X \colon X \xrightarrow{\sim} \widetilde{X} \times_{F_K} \Spec(K).$
We call $\widetilde{X}$ (together the isomorphism $\varphi_X$) a \emph{model} of $X$ over $K^p$. For convenience, we will often omit $\varphi_X$ from the notation of a model. 

Let $X$ and $Y$ be varieties that descend to $K^p$ and fix two models $\widetilde{X}$ and $\widetilde{Y}$. A morphism $f \colon X \to Y$ is said to \emph{descend to $K^p$} if there exist a morphism $\widetilde{f} \colon \widetilde{X} \to \widetilde{Y}$, such that
$f = \widetilde{f} \times_{F_K} K,$
under the natural identifications induced by the $\varphi_X$ and $\varphi_Y$. In this case, we say that $\widetilde{f}$ is a \emph{model} of $f$ over $K^p$.
\end{definition}
It will be convenient to consider the category of descent pairs whose objects are pairs $(X, \widetilde{X})$ where $X$ is a $K$-variety and $\widetilde{X}/K$ is a model of $X$. 
We define a morphism of descent pairs $(X, \widetilde{X}) \rightarrow (Y, \widetilde{Y})$ simply as a morphism $\widetilde{f} \colon \widetilde{X} \rightarrow \widetilde{Y}$ and note that, after performing base-change, one obtains a morphism $f = (\widetilde{f} \times_{F_K}K) \colon X \rightarrow Y$. Moreover, in the case where $K$ is perfect, both choices of $\widetilde{X}$ and the isomorphism are canonically defined.

\begin{example} \label{Example descending Frob}
Assume that $X = \Spec(K[x_1, \cdots, x_n]/ I)$ is an affine $K$-algebra, with $I$ generated by polynomials $f_k = \sum_I a_{k,J} x^J$ (with the standard multi-index notation). 
If $f_k \in K^p[x_1, \cdots, x_n]$ for every $k$, i.e., if $a_{k,J} \in K^p$ for every $k$ and every multi-index $J$, then $X$ descends to $K^p$ and we can choose $\widetilde{X} = \Spec(K[x_1, \cdots, x_n]/ \widetilde{I})$ where $\widetilde{I}$ is generated by $\widetilde{f}_k = \sum_{J} a_{k,J}^{1/p} x^J$.
\end{example}
When working with the subcategory of affine descent pairs $(\Spec(A), \Spec(\widetilde{A}))$, we can always assume the algebras $(A, \widetilde{A})$ to be presented as in Example \ref{Example descending Frob} without loss of generality.

\begin{lemma}
A variety $X/K$ descends to $K^p$ if and only if there is an affine open covering $X=\bigcup_{i \in I} U_i$ such that each $U_i$ descends to $\widetilde{U}_i/ K^p$ and the gluing maps $\varphi_{ij} \colon U_i \cap U_j \to U_i \cap U_j$ descend to $\widetilde{U}_i \cap \widetilde{U}_j$.
We say that $(U_i, \widetilde{U}_i)$ is an affine open covering of $(X, \widetilde{X})$.
\end{lemma}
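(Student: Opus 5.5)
The plan is to prove the two implications separately; essentially the whole argument is Zariski gluing, made compatible with the base change along $F_K$. Throughout I write $F_K^*(-) \coloneqq (-)\times_{F_K}\Spec(K)$. This is base change along the finite morphism $F_K \colon \Spec K \to \Spec K$. It commutes with fibre products and open immersions. It also carries affine schemes to affine schemes and open covers to open covers, because $F_K$ is a universal homeomorphism.

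For the direction ``descends $\Rightarrow$ covering'', fix a model $\varphi_X \colon X \xrightarrow{\sim} F_K^*\widetilde X$ and choose an affine open cover $\widetilde X = \bigcup_i \widetilde U_i$. Set $U_i \coloneqq \varphi_X^{-1}(F_K^*\widetilde U_i)$. These are affine, since they are isomorphic to base changes of affines. They cover $X$, since $F_K^*\widetilde X \to \widetilde X$ is a homeomorphism. The overlaps satisfy $U_i\cap U_j \cong F_K^*(\widetilde U_i\cap \widetilde U_j)$. Under these identifications the gluing maps are the base changes of the identity gluings of $\widetilde X$, so they descend.

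For the converse, suppose we are given affine opens $U_i$, models $\psi_i \colon U_i \xrightarrow{\sim} F_K^*\widetilde U_i$, and open subschemes $\widetilde U_{ij}\subset \widetilde U_i$ with $F_K^*\widetilde U_{ij} = \psi_i(U_i\cap U_j)$ and with isomorphisms $\widetilde\varphi_{ij}\colon \widetilde U_{ij}\to\widetilde U_{ji}$ whose base changes are the transition maps $\psi_j\circ\psi_i^{-1}$. I will glue the $\widetilde U_i$ along the $\widetilde\varphi_{ij}$ to a $K$-scheme $\widetilde X$. To do this I must check the cocycle condition $\widetilde\varphi_{jk}\circ\widetilde\varphi_{ij}=\widetilde\varphi_{ik}$ on $\widetilde U_{ij}\cap \widetilde U_{ik}$. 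Its base change holds, because it is the cocycle condition for $X$. So the key input is that $F_K^*$ is faithful on morphisms of $K$-schemes.

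This faithfulness is the step I expect to be the real content. I would prove it as follows: $\Spec K\to\Spec K$ via $F_K$ is faithfully flat, since $K$ is a field. Hence $\widetilde U\times_{F_K}K\to \widetilde U$ is an fpqc covering, and two morphisms $\widetilde U\to\widetilde V$ that agree after composing with this covering are equal. This is where the problem must be phrased correctly. One must either include the descended gluing isomorphisms, with their identification $\widetilde\varphi_{ij}\times_{F_K}K=\varphi_{ij}$, as data in the hypothesis, or use the faithfulness just described to see that they are unique, so that the cocycle condition comes for free.

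Once $\widetilde X$ is glued, the $\psi_i$ glue to an isomorphism $X\xrightarrow{\sim}F_K^*\widetilde X$, because they are compatible with the gluing data by construction. It remains to see that $\widetilde X$ is a variety in the paper's sense. It is of finite type because it is glued from finitely many affines of finite type (take the cover finite, as $X$ is quasi-compact). Separatedness and geometric connectedness descend along the faithfully flat base change $F_K$: the diagonal of $\widetilde X$ is a closed immersion iff its base change is, by fpqc descent of closed immersions, and geometric connectedness can be tested over $\overline K$, which is the same for $X$ and $\widetilde X$ because $F_K$ becomes an isomorphism after base change to the perfect field $\overline K$.
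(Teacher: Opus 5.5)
Your proposal is correct and follows the same route as the paper. For one direction you base-change an affine cover of $\widetilde X$; for the other you glue the $\widetilde U_i$ along the descended maps $\widetilde\varphi_{ij}$. You also supply details the paper leaves implicit. The paper only asserts the cocycle condition, whereas you derive it from faithfulness of $-\times_{F_K}\Spec(K)$, since the projection $\widetilde U\times_{F_K}\Spec(K)\to\widetilde U$ is faithfully flat and hence an epimorphism. You also check that the glued $\widetilde X$ is again separated, of finite type and geometrically connected, which the paper does not address.
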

\begin{proof}
   Suppose $X$ descends to $K^p$  and let $\widetilde{X}$ such that $X \cong \widetilde{X} \times_{F_K} \Spec(K)$. Consider an affine covering $\bigcup \widetilde{U}_i$ with the gluing maps $\widetilde{\varphi}_{ij} \colon \widetilde{U}_i\cap \widetilde{U}_j \to \widetilde{U}_i \cap \widetilde{U}_j$. Then we have $X \cong \bigcup U_i$ via the gluing $\varphi_{ij}=\widetilde{\varphi}_{ij} \times_{F_K} K$.

    If there is such an affine covering $U_i$ with a model $\widetilde{U}_i$ and $\widetilde{\varphi}_{ij}$, we construct $\widetilde{X}$ as the gluing of $\widetilde{U}_i$ via $\widetilde{\varphi}_{ij}$ (note that  $\widetilde{\varphi}_{ij}$ satisfy the cocyle condition). 
    Then the induced morphism $\widetilde{X} \times_{F_K} \Spec(K) \to X$ is an isomorphism.
\end{proof}

Given a model $\widetilde{X}$ of $X$ over $K^p$, we construct the relative Frobenius morphism $F_{\widetilde{X}/K} \colon \widetilde{X} \to X \cong \widetilde{X} \times_{F_K} K$, which is a purely inseparable $K$-morphism which factorises the absolute Frobenius $F_{\widetilde{X}} \colon \widetilde{X} \to \widetilde{X}$ (its existence and uniqueness follows from the universal properties of fibred products).

Now, if $\pi \colon X \rightarrow \Spec(K)$ is a smooth $K$-variety, we have the short exact sequence for sheaves of K\"{a}hler differentials:
\begin{equation} \label{Kahler sequence}
    0 \rightarrow \pi^* \Omega^1_K \rightarrow \Omega^1_X \rightarrow \Omega^1_{X/K} \rightarrow 0.
\end{equation}
A key property that we are going to use in the presence of a model is the following:
\begin{proposition} \label{prop: splitting_Kahler_sequence}
A model $\widetilde{X}$ over $K^p$ of a $K$-variety $X$ determines a splitting of the short exact sequence \eqref{Kahler sequence}. 
\end{proposition}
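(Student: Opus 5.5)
The plan is to build a splitting of \eqref{Kahler sequence}, meaning an $\mathcal{O}_X$-linear retraction $\Omega^1_X \to \pi^*\Omega^1_K$, working affine-locally on an affine open covering $(U_i,\widetilde U_i)$ of $(X,\widetilde X)$, and then to show that the local retractions glue. The underlying idea is that the coordinates of $X$ coming from $\widetilde X$ are ``$K$-constants'' in the $p$-th power sense. Consequently their absolute differentials should have no component along $\Omega^1_K$, and this gives a canonical complement to $\pi^*\Omega^1_K$.

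Locally, take $U=\Spec A$ with $A=K[x_1,\dots,x_n]/I$, where $I$ is generated by polynomials $f_k$ with coefficients in $K^p$, as in Example \ref{Example descending Frob}. Then $A = K\otimes_{K^p} A_0$, where $A_0 = K^p[x_1,\dots,x_n]/(f_k)$ is isomorphic to $\widetilde A$ via $F_K$. Absolute differentials of a tensor product split:
\[
\Omega^1_{A} \cong \bigl(\Omega^1_K\otimes_K A\bigr)\oplus \bigl(A\otimes_{A_0}\Omega^1_{A_0/\mathbb{F}_p}\bigr)/(\text{relations}).
\]
Rather than rely on this isomorphism, I will argue concretely. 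The differentials $dx_j$ together with $\pi^*\Omega^1_K$ generate $\Omega^1_A$, and the relations are $df_k=\sum_J a_{k,J}\,d(x^J)+\sum_J x^J\,da_{k,J}$. Since $a_{k,J}\in K^p$, we have $da_{k,J}=0$ in $\Omega^1_{K/\mathbb{F}_p}$, so every relation involves only the $dx_j$. Therefore
\[
\Omega^1_A = \pi^*\Omega^1_K \oplus \Big(\textstyle\bigoplus_j A\,dx_j\Big)/(df_k),
\]
and the second summand maps isomorphically onto $\Omega^1_{A/K}$. This defines the local splitting $s_U\colon \Omega^1_{U/K}\to\Omega^1_U$, given by $dx_j\mapsto dx_j$. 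Intrinsically, $s_U$ is induced by $A_0\to A$: the image of $A\otimes\Omega^1_{A_0/K^p}=A\otimes\Omega^1_{A_0/\mathbb{F}_p}$ (using $d(K^p)=0$) is a complement to $\pi^*\Omega^1_K$.

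Next I will show that this complement does not depend on the chosen presentation. The plan is to identify it intrinsically as the $\mathcal{O}_X$-span of $d(\mathcal{O}_{X_0})$, where $X_0:=\widetilde X$ is viewed as a $K^p$-scheme via $F_K$ and $X=X_0\times_{K^p}K$. Equivalently, it is the image of the pullback of $\Omega^1_{X_0}$ under the projection $X\to X_0$. This formulation is global, since the projection $X\to \widetilde X$ is a genuine morphism of schemes (the base change of $F_K$). The gluing maps $\varphi_{ij}$ descend to $\widetilde\varphi_{ij}$ and are therefore compatible with this projection, so the local complements agree on overlaps. It then remains to check globally that the map $\mathrm{pr}^*\Omega^1_{\widetilde X}\to\Omega^1_X$ factors through an isomorphism onto a complement of $\pi^*\Omega^1_K$. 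This is a local statement, and it follows from the computation above, because $\Omega^1_{\widetilde X/\mathbb F_p}\otimes$ restricted along $F_K$ kills $d(K)$ on the $\widetilde X$ side.

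I expect the main obstacle to be this last step: making the statement ``$\mathrm{pr}^*\Omega^1_{\widetilde X}$ surjects onto $\Omega^1_{X/K}$ with kernel complementary to nothing'' precise. The issue is that $\Omega^1_{\widetilde X}$ itself contains $\Omega^1_K$-directions, namely $d$ of constants of $\widetilde X$'s base, and these die after pullback because they are $p$-th powers in $K$. To handle this cleanly, I would first show that the composition $\mathrm{pr}^*\Omega^1_{\widetilde X}\to\Omega^1_X\to\Omega^1_{X/K}$ factors through $\mathrm{pr}^*\Omega^1_{\widetilde X/K}$ and is an isomorphism from it, since relative differentials commute with base change. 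The splitting is then the composite $\Omega^1_{X/K}\cong \mathrm{pr}^*\Omega^1_{\widetilde X/K}\to\Omega^1_X$. The only point to verify is that the arrow $\mathrm{pr}^*\Omega^1_{\widetilde X/K}\to\Omega^1_X$ is well defined, which requires that $\mathrm{pr}^*(d\lambda)=d(\lambda^p)=0$ for $\lambda\in K$.
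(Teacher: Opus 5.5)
Your proposal is correct. Its affine step is the paper's own computation: for $A=K[x_1,\dots,x_n]/(f_k)$ with $f_k\in K^p[x]$, the relations $df_k$ involve only the $dx_j$ because $d(a_{k,J})=0$. Hence $\Omega^1_A=(\Omega^1_K\otimes_K A)\oplus\bigl(\bigoplus_j A\,dx_j\bigr)/(df_k)$.

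The difference is how you globalize. The paper only asserts that this coordinate splitting is functorial under localisation of such presented pairs and therefore glues. You instead define the section globally and intrinsically as
\[
\Omega^1_{X/K}\xrightarrow{\ \sim\ }\mathrm{pr}^*\Omega^1_{\widetilde X/K}\longrightarrow \Omega^1_X .
\]
Here $\mathrm{pr}\colon X\to\widetilde X$ is the base change of $F_K$, and the first arrow is the inverse of the base-change isomorphism for relative differentials. The second arrow exists because $\mathrm{pr}^*\Omega^1_{\widetilde X}\to\Omega^1_X$ kills $d\lambda$ for $\lambda\in K$, since $\mathrm{pr}^\#(\lambda)=\lambda^p$. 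By right exactness of pullback it therefore factors through $\mathrm{pr}^*\Omega^1_{\widetilde X/K}$. Its composite with $\Omega^1_X\to\Omega^1_{X/K}$ is the base-change isomorphism, so you do get a section. Its image is the $\mathcal{O}_X$-span of the $dx_j$ locally, so it agrees with the paper's splitting.

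What your route buys is independence of the presentation and of the cover for free. The paper leaves this implicit, even though on overlaps one must compare splittings coming from different presentations. Your route also makes visible that the splitting depends only on $(\widetilde X,\varphi_X)$, as the subsequent Remark indicates. Once you have the global section, your third-paragraph gluing discussion is unnecessary, and injectivity of $\pi^*\Omega^1_K\to\Omega^1_X$ is already part of the exactness of \eqref{Kahler sequence} for smooth $X$. The paper's coordinate version gives something else: the explicit description, with the summand $\Omega^1_K\otimes_K A$ free on $dt_j$ for a $p$-basis and the complement spanned by the $dx_j$. That description is what is used later, e.g.\ in Lemma \ref{lem: explicit_description_descent_der}.

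One slip to fix: the equality $A\otimes\Omega^1_{A_0/K^p}=A\otimes\Omega^1_{A_0/\mathbb{F}_p}$ ``using $d(K^p)=0$'' is false. $K^p$ is not perfect, so $d(K^p)\neq 0$ in $\Omega^1_{A_0/\mathbb{F}_p}$. What is true is that both have the same image in $\Omega^1_A$, because $d(\lambda^p)=0$ there. Your final paragraph already treats this correctly.
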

\begin{proof}
This is well-known result. We give a proof using the previous language of models. 

Let $t_1, \cdots, t_k$ be a $p$-basis of $K$.
Let us first prove the proposition in the affine case. 
Let $X=\Spec(A)$ where $A=K[x_1,\dots, x_n]/(f_1,\dots, f_m)$ with $f_i \in K^p[x_1,\dots, x_n]$. 
On the other hand $\Omega^1_X$ is generated as an $A$-module by $d t_1, \cdots, dt_k$ (which are point-wise linearly indipendent) and $dx_1, \cdots, dx_n$ modulo the $A$-submodule of relations $R$ generated by $df_k = \sum_I d(a_{k,I} x^I) =  \sum_I a_{k,I} d(x^I)$, where in the last equality we used $d(a_{k,I}) = 0$ since $a_{k,I} \in K^p$. Hence $R \subset Adx_1 \oplus A dx_2 \cdots \oplus Adx_n$ and we have a natural decomposition $\Omega^1_{X} \cong \pi^* \Omega^1_K \oplus \Omega^1_{X/K}$.

As for the general case, let $(\Spec(A_i), \Spec(\widetilde{A}_i))$ be an open affine cover of $(X, \widetilde{X})$ such that $(\Spec(A_i), \Spec(\widetilde{A}_i)$ are as in Example \ref{Example descending Frob}.
As the splitting of the sequence \eqref{Kahler sequence} constructed for affine rings is functorial with respect to localisation for such pairs $(A, \widetilde{A})$, it glues to give a splitting of \eqref{Kahler sequence}.
\end{proof}

\begin{remark}
A different model $\widetilde{X}'$ of $X$ over $K^p$ can yield a different splitting. 
\end{remark}

In particular, given a model $\widetilde{X}$ over $K^p$, we have $\Omega^1_{X} \cong \Omega^1_{X/K} \oplus (\Omega^1_K \otimes_K \mathcal{O}_X)$. Similarly, the natural sequence $0 \to \Theta_{X/K} \to \Theta_{X} \to \pi^*\Theta_K \to 0$ splits yielding a decomposition $\Theta_{X} \cong \Theta_{X/K} \oplus (\Theta_{K} \otimes_K \mathcal{O}_X)$. We now discuss descent of foliations.
\begin{definition}
Let $X$ be a smooth $K$-variety with a model $\widetilde{X}$ over $K^p$.
We say that a distribution $\mathcal{F} \subset \Theta_{X/K}$ descends to $\widetilde{X}$ if there exists $\widetilde{ \mathcal{F} }\subset \Theta_{\widetilde{X}/K}$ such that the base-change $\widetilde{\mathcal{F}} \otimes_{F_K} K \to \Theta_{\widetilde{X}} \otimes_{F_K} K$ is naturally isomorphic to $\mathcal{F} \subset \Theta_{X/K}$. 
We say $\widetilde{\mathcal{F}}$ is a model of $\mathcal{F}$.
\end{definition}

We re-write the condition of descent for a distribution in terms of local affine data. Let $(A, \widetilde{A})$ be pairs of $K$-algebras as in Example \ref{Example descending Frob}. 
We say that a derivation $D \colon A \rightarrow A$ \emph{descends} to $\widetilde{A}$ if there is a derivation $\widetilde{D} \colon \widetilde{A} \rightarrow \widetilde{A}$ such that the following diagram commutes: 
\begin{equation} \label{commutative diagram derivations}
\begin{tikzcd}
	A & A \\
	{\widetilde{A}} & {\widetilde{A}}
	\arrow["D", from=1-1, to=1-2]
	\arrow[from=2-1, to=1-1]
	\arrow["{\widetilde{D}}", from=2-1, to=2-2]
	\arrow[from=2-2, to=1-2]
\end{tikzcd},
\end{equation}
where the vertical maps are induced by the Frobenius of $K$. 
Then a distribution $\mathcal{F} \subset \Theta_{\Spec(A)/K}$ descends to $\widetilde{\mathcal{F}}$ on $\widetilde{X}$ if and only if it is locally generated by derivations $D$ which descend to $\widetilde{X}$ in the previous sense. 
\begin{lemma} \label{lem: uniqueness_descent_foliation}
Let $\mathcal{F} \subset\Theta_{X/K}$ be a distribution and let $\widetilde{X}$ be a model of $X$ over $K^p$.
Then there is at most one possible model $\widetilde{\mathcal{F}} \subset \Theta_{\widetilde{X}/K}$ of $\mathcal{F}$.
\end{lemma}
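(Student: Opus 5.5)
The idea is to show that $\widetilde{\mathcal F}$ is completely determined by $\mathcal F$ through the base-change map, which is injective because Frobenius on $K$ is faithfully flat. Both the existence and the shape of a model are local, so we may work on an affine open cover $(U_i,\widetilde U_i)$ of $(X,\widetilde X)$ with $U_i=\Spec(A)$ and $\widetilde U_i=\Spec(\widetilde A)$ as in Example \ref{Example descending Frob}. Since subsheaves of $\Theta_{\widetilde X/K}$ are determined by their restrictions to such a cover, it suffices to show the following: two submodules $\widetilde{\mathcal F}_1,\widetilde{\mathcal F}_2\subset \Theta_{\widetilde A/K}$ with the same image $\mathcal F\subset\Theta_{A/K}$ after base change along $F_K$ must coincide.

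First I will make the base-change map explicit. Since $A=\widetilde A\otimes_{K,F_K}K$ and $\widetilde A$ is of finite type over $K$, we have $\Omega^1_{A/K}\cong \Omega^1_{\widetilde A/K}\otimes_{F_K}K$. As $\Omega^1$ is finitely presented, the same holds for the duals: $\Theta_{A/K}\cong\Theta_{\widetilde A/K}\otimes_{\widetilde A}A$. Concretely, a derivation $\widetilde D$ goes to the unique $K$-linear derivation $D$ making diagram \eqref{commutative diagram derivations} commute. The ring map $\widetilde A\to A$ is the base change of $F_K\colon K\to K$, which is faithfully flat since it is a field extension. Hence $\widetilde A\to A$ is faithfully flat, and so is $\widetilde A\to A$ regarded as a map of $\widetilde A$-modules.

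Next I use flatness to identify submodules. For a submodule $M\subset\Theta_{\widetilde A/K}$, flatness gives an injection $M\otimes_{\widetilde A}A\hookrightarrow \Theta_{A/K}$, whose image is the submodule $\mathcal F$. I then claim
\[ M=\{\widetilde D\in\Theta_{\widetilde A/K} : \widetilde D\otimes 1\in M\otimes A\}. \]
This is the standard fact that for a faithfully flat $R\to S$ and $M\subset N$, one has $(M\otimes S)\cap N=M$. To see it, note that the quotient $(M+\widetilde D\widetilde A)/M$ vanishes after tensoring with $A$, and therefore vanishes already over $\widetilde A$. Applying this to $\widetilde{\mathcal F}_1$ and $\widetilde{\mathcal F}_2$, whose base changes both equal $\mathcal F$, shows that both are equal to the set of $\widetilde D$ whose base change lies in $\mathcal F$. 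Hence $\widetilde{\mathcal F}_1=\widetilde{\mathcal F}_2$.

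Finally, the local models glue, since the description above is intrinsic and compatible with localisation. The only point needing care is that "naturally isomorphic" in the definition should mean equality as subsheaves of $\Theta_{X/K}$ via the fixed identification $\varphi_X$. I would state this convention explicitly. With it fixed, the argument is routine, and the main step is the faithfully flat descent identity $(M\otimes S)\cap N=M$.
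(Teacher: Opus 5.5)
Your proof is correct, but it takes a different route from the paper's.

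The paper works one derivation at a time. If $D\colon A\to A$ descends to $\widetilde D$, then $\widetilde D$ is unique because $F_K\colon\widetilde A\to A$ is injective. It is also automatically $K$-linear, since $F_K(\widetilde D(t))=D(t^p)=0$. Nothing more is said.

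You instead work with the whole submodule. You use faithful flatness of $\widetilde A\to A$, which is the base change of the field extension $F_K\colon K\to K$, to show that any model satisfies $\widetilde{\mathcal F}=\{\widetilde D\in\Theta_{\widetilde A/K} : \widetilde D\otimes 1\in\mathcal F\}$. This makes the model intrinsic.

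What your route buys is the step from elements to submodules, which the paper leaves implicit. Uniqueness of individual lifts does not by itself show that two submodules $\widetilde{\mathcal F}_1,\widetilde{\mathcal F}_2$ with the same base change coincide. Take $\widetilde D\in\widetilde{\mathcal F}_1$. Then $\widetilde D\otimes 1\in\mathcal F=\widetilde{\mathcal F}_2\otimes_{\widetilde A}A$ is a priori only an $A$-linear, not $\widetilde A$-linear, combination of elements of $\widetilde{\mathcal F}_2$. To conclude $\widetilde D\in\widetilde{\mathcal F}_2$ one needs your identity $(M\otimes_{\widetilde A}A)\cap\Theta_{\widetilde A/K}=M$. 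Equivalently, one can use the decomposition $A=\bigoplus_I \widetilde A\otimes t^I$ along $p$-basis monomials, where $t^0=1$ is a basis element, and project to the $I=0$ component.

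The paper's argument, for its part, shows that any descended derivation automatically kills $K$. In your formulation this is absorbed into the hypothesis $\widetilde{\mathcal F}\subset\Theta_{\widetilde X/K}$ built into the definition of a model. Making the convention explicit (equality of subsheaves via the fixed $\varphi_X$) is appropriate.
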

\begin{proof}
The statement of the lemma can be rephrased in terms of derivations on open affine charts $\Spec(A) \subset X$: if $D \colon A \to A$ descends to a derivation $\widetilde{D}$ on $\widetilde{A}$, then the descent $\widetilde{D}$ is unique and $\widetilde{D} \in \Theta_{\widetilde{A}/K} \subset \Theta_{\widetilde{A}}$. 
The uniqueness of $\widetilde{D}$ follows from the injectivity of $F_K \colon \widetilde{A} \rightarrow A$. For the factorisation:  for $t \in K$, we have $F_K(\widetilde{D}(t)) = D(t^p) = 0$. Hence $\widetilde{D}(t)=0$ again thanks to the injectivity of $F_K$. 
\end{proof}

Let us describe a necessary and sufficient condition for the descent of a distribution $\mathcal{F} \subset \Theta_{X/K}$ in local coordinates.

\begin{lemma} \label{lem: explicit_description_descent_der}
Let $(A, \widetilde{A})$ be a pair as in Example \ref{Example descending Frob} and let $D$ be a $K$-linear derivation $D \colon A \rightarrow A$. Write $D = \sum g_i \partial_{x_i}$. The $D$ descends to $\widetilde{A}$ if and only there are $\widetilde{g}_i \in \widetilde{A}$ such that $F_K(\widetilde{g}_i) = g_i$.
\end{lemma}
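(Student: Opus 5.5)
The plan is to unwind the commutative diagram \eqref{commutative diagram derivations} in the coordinates of Example \ref{Example descending Frob}. Write $A = K[x_1,\dots,x_n]/I$ and $\widetilde{A} = K[x_1,\dots,x_n]/\widetilde{I}$. The vertical map $F_K\colon \widetilde{A}\to A$ is the ring map that raises coefficients in $K$ to the $p$-th power and sends $x_i\mapsto x_i$. It is well defined because $F_K(\widetilde f_k)=f_k$, and it is injective (as used in Lemma \ref{lem: uniqueness_descent_foliation}): $A \cong \widetilde{A}\otimes_{K,F_K}K$ is faithfully flat over $\widetilde{A}$, since $K$ is free over $K^p$. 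A $K$-linear derivation $D$ of $A$ is determined by the values $g_i = D(x_i)$, so we may write $D=\sum g_i\partial_{x_i}$.

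\emph{Necessity.} Suppose $\widetilde{D}$ exists. By Lemma \ref{lem: uniqueness_descent_foliation} it is $K$-linear. Set $\widetilde g_i := \widetilde{D}(x_i)\in\widetilde{A}$. Commutativity of the diagram gives $F_K(\widetilde g_i) = D(F_K(x_i)) = D(x_i) = g_i$.

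\emph{Sufficiency.} Given $\widetilde g_i$ with $F_K(\widetilde g_i)=g_i$, define $\widetilde{D}=\sum \widetilde g_i\partial_{x_i}$ on the polynomial ring $K[x_1,\dots,x_n]$, with values in $\widetilde{A}$. We must check that it kills $\widetilde{I}$, i.e.\ that $\widetilde{D}(\widetilde f_k)=\sum_i \widetilde g_i\,\partial_{x_i}\widetilde f_k = 0$ in $\widetilde{A}$. Apply the injective map $F_K$. Since $F_K$ is a ring map that commutes with formal partial derivatives in the $x_i$ (it acts only on the coefficients), we get $F_K(\partial_{x_i}\widetilde f_k)=\partial_{x_i}f_k$. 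Hence
\[
F_K\bigl(\widetilde{D}(\widetilde f_k)\bigr)=\sum_i g_i\,\partial_{x_i} f_k = D(f_k)=0 \quad\text{in } A,
\]
where the last equality holds because $D$ is a well-defined derivation on $A$. So $\widetilde{D}$ descends to a $K$-linear derivation of $\widetilde{A}$.

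It remains to check that the diagram commutes, i.e.\ that $D\circ F_K = F_K\circ\widetilde{D}$ on $\widetilde{A}$. Both sides are additive and satisfy the Leibniz rule relative to $F_K$: for instance $D(F_K(ab)) = F_K(a)D(F_K b)+F_K(b)D(F_K a)$, and the same holds for $F_K\circ\widetilde{D}$. Both vanish on constants $t\in K$, since $D(t^p)=0$ and $\widetilde{D}(t)=0$. Both agree on the generators $x_i$, where each gives $g_i$. Hence they agree on all of $\widetilde{A}$.

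The only delicate point is the injectivity of $F_K\colon \widetilde{A}\to A$, which is what lets us transport the vanishing of $D(f_k)$ back to $\widetilde{A}$. This follows from flatness of $F_K\colon K\to K$ together with $A\cong \widetilde{A}\otimes_{F_K}K$, since $\widetilde{A}\to\widetilde{A}\otimes_{K,F_K}K$ is injective for $K$ free over $F_K(K)$.
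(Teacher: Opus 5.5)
Your proof is correct and follows the same route as the paper. Necessity comes from setting $\widetilde{g}_i := \widetilde{D}(x_i)$ and using commutativity of the diagram; sufficiency comes from defining $\widetilde{D} = \sum_i \widetilde{g}_i \partial_{x_i}$. You also supply the details the paper dismisses as ``clear'': that $\widetilde{D}$ kills $\widetilde{I}$, via injectivity of $F_K \colon \widetilde{A} \to A$ and $F_K(\partial_{x_i}\widetilde{f}_k) = \partial_{x_i} f_k$, and that $D \circ F_K = F_K \circ \widetilde{D}$, by checking it on $K$ and on the generators $x_i$.
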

\begin{proof}
Suppose first that $F_K(\widetilde{g}_i) = g_i$ for every $i$. Then the derivation $D$ clearly descends to $\widetilde{D} = \sum_i \widetilde{g_i} \partial_{x_i}$. 
For the reverse implication: if $D$ descends to $\widetilde{A}$, we have $g_i = D(x_i) = D(F_K(x_i)) = F_K(\widetilde{D}(x_i))$, and we conclude by defining $\widetilde{g}_i=\widetilde{D}(x_i)$.  
\end{proof}

\begin{lemma}
Let $(X, \widetilde{X})$ be a descent pair.
Then $\mathcal{F} \subset \Theta_{X/K}$ is a foliation (resp. $p$-closed) if and only if $\widetilde{\mathcal{F}} \subset \Theta_{\widetilde{X}/K}$ is a foliation (resp $p$-closed). If $X$ is smooth, then $\mathcal{F} \subset \Theta_{X/K}$ is regular if and only if $\widetilde{\mathcal{F}} \subset \Theta_{\widetilde{X}/K}$ is regular.
\end{lemma}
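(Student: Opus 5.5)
The plan is to reduce everything to local affine computations on an affine open cover $(\Spec(A_i), \Spec(\widetilde{A}_i))$ of the descent pair, with each pair presented as in Example \ref{Example descending Frob}. The key tool is the ring map $F_K \colon \widetilde{A} \to A$, $\sum a_J x^J \mapsto \sum a_J^p x^J$. This map is injective; in fact $A = \widetilde{A} \otimes_{K, F_K} K$, so $A$ is faithfully flat over the image of $\widetilde{A}$ tensored up. By Lemma \ref{lem: explicit_description_descent_der}, and by the diagram \eqref{commutative diagram derivations}, a descended derivation satisfies $D \circ F_K = F_K \circ \widetilde{D}$. Since $D$ is $K$-linear and $A$ is generated over $K$ by $F_K(\widetilde{A})$, $D$ is exactly the $K$-linear extension $\widetilde{D} \otimes \mathrm{id}_K$ of $\widetilde{D}$. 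In other words, $\mathcal{F} = \widetilde{\mathcal{F}} \otimes_{F_K} K$ as a subsheaf of $\Theta_{X/K} = \Theta_{\widetilde{X}/K} \otimes_{F_K} K$, where the latter identification holds because $\Omega^1$ commutes with base change.

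First, Lie brackets and $p$-th powers. For $\widetilde{D}_1, \widetilde{D}_2$ we have
\[
[D_1, D_2] \circ F_K = F_K \circ [\widetilde{D}_1, \widetilde{D}_2]
\quad\text{and}\quad
D^p \circ F_K = F_K \circ \widetilde{D}^p,
\]
so $[D_1,D_2]$ and $D^p$ are the base changes of $[\widetilde{D}_1,\widetilde{D}_2]$ and $\widetilde{D}^p$. Closure of $\widetilde{\mathcal{F}}$ under bracket therefore gives closure of $\mathcal{F} = \widetilde{\mathcal{F}} \otimes K$, since brackets of $K$-linear combinations reduce to brackets of generators by $K$-bilinearity. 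The $p$-power needs more care, because $(fD)^p$ is not $f^pD^p$. I would use the standard fact that a distribution closed under bracket is $p$-closed if and only if $D^p \in \mathcal{F}$ for a set of local generators $D$; this follows from Hochschild's formula $(fD)^p = f^pD^p + (fD)^{p-1}(f)D$. For the converse directions I would use faithful flatness of $F_K\colon K \to K$: the condition $[\widetilde D_1,\widetilde D_2] \in \widetilde{\mathcal{F}}$ is the vanishing of its image in $\Theta_{\widetilde{X}/K}/\widetilde{\mathcal{F}}$, and this vanishing can be checked after the faithfully flat base change $-\otimes_{F_K} K$, where it becomes $[D_1,D_2] \in \mathcal{F}$. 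The same argument handles $p$-th powers.

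Second, saturation and regularity. The quotient satisfies $(\Theta_{X/K}/\mathcal{F}) \cong (\Theta_{\widetilde{X}/K}/\widetilde{\mathcal{F}}) \otimes_{F_K} K$, because base change is right exact (indeed exact, as field extensions are flat). Local freeness of a coherent sheaf descends and ascends along the faithfully flat base change $X \to \widetilde{X}$, which is the projection $\widetilde{X}\times_{F_K}K \to \widetilde{X}$. This proves the regularity equivalence. Torsion-freeness ascends by flatness, since $\widetilde{\mathcal{F}}$ is saturated in a locally free sheaf; and $X$ is normal, which holds if $\widetilde{X}$ is smooth, since smoothness is preserved by base change and descends along field extensions.

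The main obstacle is saturation in the direction $\widetilde{\mathcal{F}}$ saturated $\Rightarrow$ $\mathcal{F}$ saturated. Torsion-freeness of the quotient is not obviously preserved under an inseparable base change when the scheme is only normal: geometric normality can fail, and $X$ could a priori be non-reduced. I would handle this by using the smoothness hypothesis, available via $\widetilde{X}$ smooth $\Leftrightarrow$ $X$ smooth. For a coherent sheaf $Q$ on a regular scheme, torsion-freeness is equivalent to $Q \hookrightarrow Q^{\vee\vee}$ being injective. This injectivity commutes with flat base change, because duals commute with flat base change for coherent sheaves, and it descends by faithful flatness. If the paper's standing definition of foliation only implicitly requires normality, I would state this reduction to the smooth case explicitly.
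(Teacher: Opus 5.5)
Your proof is correct, but it follows a different route from the paper's. The paper argues elementwise in local coordinates, using injectivity of $F_K\colon\widetilde{A}\to A$ and Lemmas \ref{lem: uniqueness_descent_foliation} and \ref{lem: explicit_description_descent_der}:
\begin{itemize}
\item brackets and $p$-powers are declared ``immediate'';
\item saturation is checked only in the direction $\mathcal{F}$ saturated $\Rightarrow$ $\widetilde{\mathcal{F}}$ saturated (if $\widetilde{a}\widetilde{D}\in\widetilde{\mathcal{F}}$ then $F_K(\widetilde{a})D\in\mathcal{F}$, so $D\in\mathcal{F}$);
\item regularity is handled by choosing, after localizing, a basis of $\mathcal{F}$ that descends.
\end{itemize}

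You instead identify $\mathcal{F}=\widetilde{\mathcal{F}}\otimes_{F_K}K$ inside $\Theta_{X/K}=\Theta_{\widetilde{X}/K}\otimes_{F_K}K$. You then run every step through the faithfully flat projection $X\to\widetilde{X}$:
\begin{itemize}
\item membership of $[\widetilde D_1,\widetilde D_2]$ or $\widetilde D^p$ in $\widetilde{\mathcal{F}}$ is detected after base change in the quotient $\widetilde{Q}=\Theta_{\widetilde X/K}/\widetilde{\mathcal F}$;
\item local freeness of the quotient descends by fpqc descent;
\item torsion-freeness is transported via $Q\to Q^{\vee\vee}$, whose formation commutes with flat base change.
\end{itemize}

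What your route buys:
\begin{itemize}
\item Every equivalence is proved in both directions. The paper never treats $\widetilde{\mathcal{F}}$ saturated $\Rightarrow$ $\mathcal{F}$ saturated. Its step ``$D\in\mathcal{F}$ implies $\widetilde D\in\widetilde{\mathcal F}$ by definition'' is exactly your injectivity of $\widetilde Q\to\widetilde Q\otimes_{F_K}K$.
\item No descending local basis is needed for regularity.
\item You correctly note that $p$-closedness needs Hochschild's and Jacobson's formulas, since $p$-th powers are neither additive nor $\mathcal{O}_X$-semilinear.
\end{itemize}
The paper's argument is shorter but leaves these points implicit. The price of your route is integrality of $X$ and $\widetilde X$ in the saturation step. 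This is harmless in the smooth, geometrically connected setting of the foliation section.

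One cleanup: your sentence ``torsion-freeness ascends by flatness'' contradicts your later ``main obstacle'' remark about the same direction. The $Q\to Q^{\vee\vee}$ argument settles both directions, and it needs only integrality, not regularity, since on an integral Noetherian scheme the kernel of $Q\to Q^{\vee\vee}$ is exactly the torsion. State that argument once and drop the earlier claim.
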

\begin{proof}
All the statements are local on $X$, so we can suppose without loss of generality that $(X, \widetilde{X})$ is $(\Spec(A), \Spec(\widetilde{A}))$.
It is immediate to see from Lemma \ref{lem: uniqueness_descent_foliation} and \ref{lem: explicit_description_descent_der} that  $\widetilde{\mathcal{F}}$ is closed under Lie brackets (resp. under $p$-powers) if and only if $\mathcal{F}$ is so.  

We now check that $\mathcal{F}$ is saturated in $\Theta_{X/K}$ if and only if $\widetilde{\mathcal{F}}$ is saturated in $\Theta_{\widetilde{X}/K}$. 
Let $\widetilde{D} = \sum_i \widetilde{f}_i \partial_{x_i} \in \Theta_{\widetilde{X}/K}$ be a derivation such that $\widetilde{a} \widetilde{D} \in \widetilde{\mathcal{F}}$ for some $\widetilde{a} \in \widetilde{A}$. Then $F_K(\widetilde{a}) D \in \mathcal{F}$ with $D=\sum_i F_K(\widetilde{f}_i) \partial_{x_i}$, showing that $D \in \mathcal{F}$ which implies by definition that $\widetilde{D} \in \widetilde{\mathcal{F}}$. 

We now discuss descent of regularity. 
Up to further localizing $\widetilde{A}$, we can assume that $\mathcal{F}$ is a free sub-vector bundle of $\Theta_{X/K}$ and that it admits a basis $D_1, \cdots, D_r$ which descends to $\widetilde{D}_1, \cdots, \widetilde{D}_r$ on $\widetilde{X}$ as in Lemma \ref{lem: explicit_description_descent_der}, so that $\widetilde{\mathcal{F}} \cong \bigoplus_{i=1}^r \widetilde{A} \widetilde{D}_i \subset \Theta_{\widetilde{A}/K}$ is still a sub-vector bundle. 
\end{proof}

\begin{lemma}
Let $\mathcal{F}$ be a $p$-closed foliation on $X$ that descends to $\widetilde{\mathcal{F}}$ on a model $\widetilde{X}$.
Then $\widetilde{X}/ \widetilde{\mathcal{F}}$ is a model of $X/ \mathcal{F}$ over $K^p$. 
\end{lemma}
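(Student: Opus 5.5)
The plan is to prove the statement locally on an affine cover and then glue, exactly as in the proof of Proposition \ref{prop: splitting_Kahler_sequence}. Choose an affine open covering $(\Spec(A_i),\Spec(\widetilde{A}_i))$ of $(X,\widetilde{X})$, with each pair presented as in Example \ref{Example descending Frob}. Refine it so that on each chart $\mathcal{F}$ is generated by derivations $D_1,\dots,D_r$ that descend to $\widetilde{D}_1,\dots,\widetilde{D}_r$ generating $\widetilde{\mathcal{F}}$; this is possible by Lemma \ref{lem: explicit_description_descent_der} and the definition of a model. On such a chart the quotients are $\Spec(A^{\mathcal{F}})$ and $\Spec(\widetilde{A}^{\widetilde{\mathcal{F}}})$, where $A^{\mathcal{F}}=\{f\in A : D_j f=0 \ \forall j\}$. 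The goal is a natural isomorphism
\[ \widetilde{A}^{\widetilde{\mathcal{F}}}\otimes_{K,F_K} K \xrightarrow{\sim} A^{\mathcal{F}} \]
that is compatible with localisation. This compatibility gives the descent of the gluing maps, so the lemma characterising descent via affine coverings yields the global statement.

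To set up the map, write $A=\widetilde{A}\otimes_{K,F_K}K$. Then $D_j=\widetilde{D}_j\otimes 1$: the diagram \eqref{commutative diagram derivations} together with $K$-linearity of $D_j$ forces $D_j(\widetilde{a}\otimes\lambda)=\widetilde{D}_j(\widetilde{a})\otimes\lambda$. Hence $A^{\mathcal{F}}$ is the kernel of the $K$-linear map $A\to A^{\oplus r}$ given by $(D_j)_j$, and this map is $(\widetilde{D}_j)_j\otimes_{K,F_K}K$. The key observation is that $F_K\colon K\to K$ makes $K$ a flat (indeed free) module over $K$ via Frobenius, since any module over a field is free. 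Base change along $F_K$ therefore commutes with kernels, giving $\ker(\widetilde{D}_\bullet)\otimes_{K,F_K}K\cong\ker(D_\bullet)$. Note that $\widetilde{D}_j$ is $K$-linear (Lemma \ref{lem: uniqueness_descent_foliation}), so its kernel is a $K$-subalgebra and the tensor product makes sense as a $K$-algebra. The resulting isomorphism of algebras is the map $\widetilde{a}\otimes\lambda\mapsto F_K(\widetilde{a})\lambda$, which is visibly compatible with localisation at elements of $\widetilde{A}^{\widetilde{\mathcal{F}}}$.

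For the gluing I also need the kernels to be unchanged when different local generating sets are chosen, and the chart descriptions to agree on overlaps. Both hold because the quotient is defined intrinsically as the annihilator of all local sections of $\mathcal{F}$, and localisation commutes with taking kernels.

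The main subtlety I expect is the compatibility on overlaps. The quotient $X/\mathcal{F}$ is glued from the opens $\Spec(A_i^{\mathcal{F}})$, since $X\to X/\mathcal{F}$ is a homeomorphism. The intersections of these opens must be affine and described by localisations of invariant rings. I would handle this by choosing the cover to consist of opens $D(s)$ with $s$ a $p$-th power, hence $\mathcal{F}$-invariant. Then $(A_s)^{\mathcal{F}}=(A^{\mathcal{F}})_s$, and similarly on $\widetilde{X}$, so localisation and invariants commute on the nose. The natural isomorphisms then satisfy the cocycle condition automatically because they are all the same canonical map $\widetilde{a}\otimes\lambda\mapsto F_K(\widetilde{a})\lambda$.
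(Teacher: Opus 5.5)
Your proposal is correct and follows the same route as the paper: reduce to an affine descent pair on which local generators $D_j$ of $\mathcal{F}$ descend to generators $\widetilde{D}_j$ of $\widetilde{\mathcal{F}}$, then show that the natural map $\widetilde{A}^{\widetilde{\mathcal{F}}}\otimes_{K,F_K}K\to A^{\mathcal{F}}$ is an isomorphism. You also supply justifications the paper leaves implicit: that $D_j=\widetilde{D}_j\otimes 1$, that base change along the flat map $F_K$ commutes with kernels, and that invariants commute with localisation at $p$-th powers, which is what the gluing needs.
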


\begin{proof}
The statement can be checked on affine charts of $X$, so without loss of generality we can suppose that $(X, \widetilde{X})=(\Spec(A), \Spec(\widetilde{A}))$ is an affine descent pair.
Let $D_1, \dots, D_r$ be local generators of $\mathcal{F}$ with descent data $\widetilde{D_1}, \dots, \widetilde{D_r}$, that are local generators of $\widetilde{\mathcal{F}}$.
Then the natural inclusion $\left\{ \widetilde{f} \in A^{\widetilde{\mathcal{F}}} \mid \widetilde{D}_i(\widetilde{f})=0 \right\} \otimes_K \widetilde{K}  \to A^{\mathcal{F}}$ is an isomorphism, concluding the proof.
\end{proof}

\begin{proposition} \label{prop: factorization_frob}
Let $X$ be a smooth variety over $K$ of dimension $n$.
Let $(X, \widetilde{X})$ be a descent pair over $K^p$. 
Let $\mathcal{F} \subset \Theta_{X/K}$ be a $K$-linear regular $p$-closed foliation of rank $r$ that descends to $\widetilde{\mathcal{F}}$ on $\widetilde{X}$. Then  on $Y \coloneqq \widetilde{X}/\mathcal{\widetilde{\mathcal{F}}}$, there is a regular $p$-closed foliation $\mathcal{G}$ of rank $p^{n-r}$ such that the quotient $\phi \colon Y \to 
    Y/\mathcal{G} \cong X$ and we have a factorisation :
      $$F_{\widetilde{X}/K} \colon \widetilde{X} \xrightarrow[]{\varphi} Y \xrightarrow{\phi} X,$$
      where $\varphi$ is finite of degree $p^r$.
Moreover, there is a factorisation of the relative Frobenius $F_{(\widetilde{X}/\widetilde{\mathcal{F}})/K}$:
$$   F_{(\widetilde{X}/\widetilde{\mathcal{F}})/K} \colon \widetilde{X}/ \widetilde{\mathcal{F}} \xrightarrow{\psi} X \rightarrow X/ \mathcal{F}.$$\end{proposition}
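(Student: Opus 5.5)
The plan is to work locally on an affine descent pair $(\Spec(A),\Spec(\widetilde{A}))$ as in Example \ref{Example descending Frob}, construct everything there, and then glue. The gluing is harmless because the rings of invariants of a foliation are compatible with localisation, and the descent of $\mathcal{F}$ is unique by Lemma \ref{lem: uniqueness_descent_foliation}.

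The key point is the chain of $K$-algebras
\[
F_K(\widetilde{A})\cdot K = A^{(p)} \subset A,
\]
interpreted through the relative Frobenius. Concretely, $F_{\widetilde{X}/K}\colon \widetilde{X}\to X$ corresponds on rings to the $K$-algebra map $A=\widetilde{A}\otimes_{F_K}K\to \widetilde{A}$, $a\otimes\lambda\mapsto \lambda a^p$. Its image is $K[\widetilde{A}^p]\subset \widetilde{A}$. By smoothness of $\widetilde{X}$ this is exactly the subring $\widetilde{A}^{\Theta_{\widetilde{X}/K}}$ killed by all $K$-linear derivations, and $\widetilde{A}$ is locally free of rank $p^n$ over it. This is Jacobson/Cartier correspondence for the full foliation $\Theta_{\widetilde{X}/K}$.

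Now set $B\coloneq\widetilde{A}^{\widetilde{\mathcal{F}}}$, so that $Y=\Spec(B)$ locally. Since $\widetilde{\mathcal{F}}\subset\Theta_{\widetilde{X}/K}$, every $K$-linear derivation annihilating $B$ kills $K[\widetilde{A}^p]$. We therefore get inclusions
\[
K[\widetilde{A}^p]\subset B\subset \widetilde{A},
\]
which yield the factorisation $F_{\widetilde{X}/K}=\phi\circ\varphi$ with $\varphi\colon\widetilde{X}\to Y$ and $\phi\colon Y\to X$. The degree of $\varphi$ is $p^r$ by the general theory of quotients by a regular $p$-closed foliation of rank $r$. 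Multiplicativity then gives $\deg\phi=p^{n-r}$; the statement's ``rank $p^{n-r}$'' presumably means the degree, the rank of $\mathcal{G}$ being $n-r$.

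To realise $X$ as $Y/\mathcal{G}$, I would take $\mathcal{G}\subset\Theta_{Y/K}$ to be the saturation of the sheaf of $K$-linear derivations of $B$ vanishing on $K[\widetilde{A}^p]$. Jacobson's Galois correspondence for height-one purely inseparable extensions gives a bijection between $p$-closed foliations and intermediate subrings over which the ring is $p$-th power-closed. Applying it to the extension $K[\widetilde{A}^p]\subset B$ shows that $\mathcal{G}$ is $p$-closed and that $B^{\mathcal{G}}=K[\widetilde{A}^p]\cong A$, so $Y/\mathcal{G}\cong X$.

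I expect regularity of $\mathcal{G}$ and of $Y$ to be the main obstacle. Regularity of $Y$ follows from regularity of $\widetilde{\mathcal{F}}$ by the general fact that quotients by regular foliations are regular. For $\mathcal{G}$, I would try to choose local $p$-coordinates: regularity of $\widetilde{\mathcal{F}}$ should let one pick étale coordinates $x_1,\dots,x_n$ on $\widetilde{X}$ over $K$ with $\widetilde{\mathcal{F}}=\langle\partial_{x_1},\dots,\partial_{x_r}\rangle$. This is a Frobenius-type normal form for regular $p$-closed foliations, due to Ekedahl. In these coordinates
\[
B = K[\widetilde{A}^p][x_{r+1},\dots,x_n],
\]
and $\mathcal{G}$ is spanned by the $\partial_{x_j}$ with $j>r$. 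This is visibly regular, and the counting gives rank $n-r$.

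For the final factorisation, I would use that $X=\widetilde{X}\times_{F_K}K$ and that, by the Lemma preceding the statement, $\widetilde{X}/\widetilde{\mathcal{F}}$ is a model of $X/\mathcal{F}$. The relative Frobenius of $Y=\widetilde{X}/\widetilde{\mathcal{F}}$ is then
\[
F_{Y/K}\colon Y\to (\widetilde{X}/\widetilde{\mathcal{F}})\times_{F_K}K\cong X/\mathcal{F}.
\]
On rings it is given by $b\otimes\lambda\mapsto\lambda b^p$. It factors through $X$ via $\psi=\phi$ followed by the quotient map $X\to X/\mathcal{F}$. This amounts to the ring inclusions
\[
K[B^p]\subset K[\widetilde{A}^p]\subset B,
\]
where $K[B^p]$ is identified with $A^{\mathcal{F}}$ by checking on generators. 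That identification is exactly what the descent data of $\mathcal{F}$ provide.
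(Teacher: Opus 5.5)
Your proposal is correct and follows essentially the paper's route: you take local coordinates in which $\widetilde{\mathcal F}=\langle\partial_{x_1},\dots,\partial_{x_r}\rangle$, let $\mathcal G$ be spanned by the remaining coordinate derivations on $B=\widetilde A^{\widetilde{\mathcal F}}$, check $B^{\mathcal G}=K[\widetilde A^p]\cong A$, and get the second factorisation from $K[B^p]\subset K[\widetilde A^p]\subset B$, a step the paper only calls ``analogous''. You are also right that $\mathcal G$ has rank $n-r$ and that $p^{n-r}$ is the degree of $\phi$; the one slip is your stated reason for $K[\widetilde A^p]\subset B$, which should be that every section of $\widetilde{\mathcal F}$ is $K$-linear and hence kills $K$ and all $p$-th powers.
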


\begin{proof}
    The existence of $\varphi$ and its factorisation of the relative $F_{\widetilde{X}/K}$ is well-known in the literature (see e.g. \cite[Proposition 2.1.4]{Mad16}).  
    On $Y$, we define the new distribution $\mathcal{G} \coloneqq \im(F_{\widetilde{X}/K*} \Theta_{\widetilde{X}/K} \to \Theta_{X/K}) \subset \Theta_{X/K}$, which is a $p$-closed foliation.
    
    We are left to verify the statements on the degree $\varphi$, the smoothness of $\widetilde{X}/\widetilde{\mathcal{F}}$ and the properties of $\mathcal{G}$.
    For this, we can work affine locally and suppose that $(X, \widetilde{X})=(\Spec(A), \Spec(\widetilde{A}))$, where $A$ is a smooth $K$-algebra with a local system of parameters $t_1, \dots, t_n$ and that $\mathcal{F}$ is generated by $\partial_{t_1}, \dots, \partial_{t_r}$. 
    Then $\mathcal{G}$ is generated by $\varphi_* \partial_{t_{r+1}}, \dots, \varphi_* \partial_{t_n}$: it is clearly a regular $p$-closed foliation of rank $p^{n-r}$ on $\Spec(\widetilde{A}^{\mathcal{F}})$.
    Moreover, the ring $(\widetilde{A}^{\mathcal{F}})^{\mathcal{G}}$ is naturally isomorphic to $A$ concluding the proof of the factorisation $F_K= \psi \circ \varphi$. 

    The proof of the factorisation $\widetilde{X}/\widetilde{\mathcal{F}}$ is analogous applied to the regular $p$-closed foliation $\mathcal{G}$.
\end{proof}
We can now state the main result of this section, which is a reformulation of condition $(\star)$ of \cite{Val24} in the case the subsheaf $\mathcal{L} \subset \Omega^1_{X/K}$ induces a $p$-closed foliation.

\begin{theorem} \label{prop: equivalence-star-condition}
 Let $X$ be a smooth projective $K$-variety with model $\widetilde{X}$ over $K^p$, which induces a splitting $\Omega^1_X \cong \Omega^1_{X/K} \oplus (\Omega^1_K \otimes_K \mathcal{O}_X)$. 
 Let $\mathcal{F} \subset \Theta_{X/K}$ be a $p$-closed foliation and assume that $\mathcal{F}$ descends to $\widetilde{\mathcal{F}}$ on $\widetilde{X}$.
 Let $\phi \colon Y \coloneqq \widetilde{X}/ \widetilde{\mathcal{F}} \rightarrow X$ be the $K$-linear morphism constructed in Proposition \ref{prop: factorization_frob}. 
 Let $\mathcal{C} \subset \Omega^1_{X/K}$ be the kernel of the natural bilinear pairing $\Omega^1_{X/K} \otimes \mathcal{F} \rightarrow \mathcal{O}_X$. Assume that $\mathcal{C}$ is globally generated and consider the inclusion $\mathcal{C} \subset \Omega^1_X$ given by the splitting induced by the model $\widetilde X$. For any field extension $K \subset L$ and any point $L$-point $x \colon \Spec(L) \to X$, we have a natural morphism $x^* \colon H^0(X, \mathcal{C}) \otimes L \to \Omega^1_L$. Then, the following conditions are equivalent:
 \begin{enumerate}
     \item $x$  admits a lifting  $\widetilde{x} \colon \Spec(L) \to \widetilde{X}/ \widetilde{\mathcal{F}}$ such that $\phi \circ \widetilde{x}=x$,
     \item for every section $\omega \in H^0(X, \mathcal{C})$,  we have  $x^* \omega = 0.$
 \end{enumerate}
\end{theorem}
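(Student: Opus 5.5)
The plan is to reduce the statement to a local computation on $X$. The first point is that, since $\mathcal{C}$ is globally generated, condition (2) is equivalent to a local one. Condition (2) holds if and only if $x^*$ kills the fibre $\mathcal{C}\otimes k(x)$, where $k(x)$ is the residue field of the image point. Equivalently, $x^*$ kills all local sections of $\mathcal{C}$ near the image of $x$. Here $x^*\colon \Omega^1_X\to\Omega^1_L$ is the absolute pullback, and $\mathcal{C}$ sits in $\Omega^1_X$ via the splitting of Proposition \ref{prop: splitting_Kahler_sequence}.

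So we may work on an affine descent pair $(\Spec A,\Spec\widetilde A)$ as in Example \ref{Example descending Frob}, around the image of $x$. I will assume, as in Proposition \ref{prop: factorization_frob}, that $\widetilde{\mathcal F}$ is regular, so that $\phi$ is defined as there. Shrinking, we may take local coordinates $\widetilde t_1,\dots,\widetilde t_n\in\widetilde A$ (an étale chart over $K$) such that $\widetilde{\mathcal F}$ is freely generated by $\partial_{\widetilde t_1},\dots,\partial_{\widetilde t_r}$. Let $t_i\in A$ be the corresponding functions under base change along $F_K$. Then $\mathcal F$ is generated by $\partial_{t_1},\dots,\partial_{t_r}$, and $\mathcal C$ is locally free with basis $dt_{r+1},\dots,dt_n$.

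A key check is that, under the splitting induced by the model, the section $dt_i$ of $\Omega^1_{X/K}$ goes to the absolute differential $dt_i\in\Omega^1_X$. This holds because $t_i$ is defined by polynomials with coefficients in $K^p$: its absolute differential has no $\Omega^1_K$-component, which is exactly the computation in the proof of Proposition \ref{prop: splitting_Kahler_sequence}. I expect this compatibility of coordinates with both the model and the foliation to be the main technical obstacle. If étale coordinates adapted to $\widetilde{\mathcal F}$ cannot be chosen directly inside $\widetilde A$, I would first pass to local $p$-bases, using that regular $p$-closed foliations are locally generated by coordinate derivations (the Frobenius-theorem type statement in \cite{Eke87}).

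Next I will describe $\phi$ concretely. The relative Frobenius sends $t_i\mapsto\widetilde t_i^{\,p}$. The ring $\mathcal O_Y=\widetilde A^{\widetilde{\mathcal F}}$ is generated over the image of $A$ by $\widetilde t_{r+1},\dots,\widetilde t_n$, giving the presentation
\[
\mathcal O_Y\cong A[s_{r+1},\dots,s_n]/(s_i^p-t_i),
\]
which one checks by comparing $p$-bases and degrees ($\phi$ has degree $p^{n-r}$). An $L$-point $x\colon A\to L$ therefore lifts to $Y$ if and only if each $x(t_i)$, for $i>r$, has a $p$-th root in $L$. Such a lift is automatically unique and $K$-linear.

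Finally, for any field $L$ of characteristic $p$ and any $a\in L$, we have $a\in L^p$ if and only if $da=0$ in $\Omega^1_L=\Omega^1_{L/\mathbb F_p}$, since $L^p$-independent elements have independent differentials. Hence the lift exists if and only if $d(x(t_i))=x^*(dt_i)=0$ for all $i>r$. This is the local form of (2), which proves the equivalence. Uniqueness of the lift also lets the local lifts glue, although only the chart containing the image of $x$ is really needed.
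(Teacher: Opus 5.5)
Your proof is correct, but it takes a different route from the paper's. The paper works without coordinates. It gets (1)$\Rightarrow$(2) from $\phi^*\omega=0$ for $\omega\in\mathcal C$. For (2)$\Rightarrow$(1) it factors the absolute Frobenius as $F_X=\phi\circ f\circ F_K$, where $F_K\colon X\to\widetilde X$ is the base-change projection and $f\colon\widetilde X\to Y$ is the quotient map. It then notes that the composite $\Spec L\to X\to\widetilde X\to Y$ descends along $\Spec L\to\Spec(L)/\Theta_L$, which gives the lift, once the push-forward of $\Theta_L$ lands in $\widetilde{\mathcal F}$. This inclusion is verified by pairing with $H^0(\widetilde X,\widetilde{\mathcal C})$ through the adjunction $\langle (F_K)_*x_*\theta,\widetilde\omega\rangle=\langle\theta,x^*F_K^*\widetilde\omega\rangle$.

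You instead make $\phi$ explicit as $\mathcal O_Y\cong A[s_{r+1},\dots,s_n]/(s_i^p-t_i)$, so lifting is just extracting $p$-th roots of $x(t_{r+1}),\dots,x(t_n)$. You then conclude with $a\in L^p\iff da=0$ in $\Omega^1_L$.

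The paper's argument is shorter and intrinsic. Yours gives both implications at once, together with uniqueness of the lift. It also shows exactly where the splitting matters: the absolute $dt_i$ has no $\Omega^1_K$-component because $t_i$ comes from $\widetilde A$. And it makes explicit two facts the paper uses silently:
\begin{enumerate}
\item the common kernel of all derivations of $L$ is $L^p$, which is needed to identify $\Spec(L)/\Theta_L$ with $\Spec L$;
\item a pointwise derivation annihilated by $\widetilde{\mathcal C}$ actually kills $\mathcal O_Y$, which requires regularity of $\widetilde{\mathcal F}$ and is immediate in your coordinates.
\end{enumerate}

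The cost of your route is the existence of \'etale coordinates on $\widetilde X$ adapted to $\widetilde{\mathcal F}$, a relative Jacobson--Frobenius type theorem. This adds no hypothesis: the paper's proof of Proposition \ref{prop: factorization_frob} already assumes such coordinates. Likewise, regularity of $\widetilde{\mathcal F}$ is implicit in the statement, since $\phi$ is the map constructed there.
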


\begin{proof}

One direction is easy (and does not need the global generation of $\mathcal{C}$): suppose there exists such a lifting $\widetilde{x}$. As $\psi^*\omega=0$ for any $\omega \in H^0(X,  \mathcal{C})$ by definition of $\mathcal{C}$, we deduce that $x^* \omega =0$.

Let us prove the converse implication. Consider now the diagram 
\[\begin{tikzcd}
	X & {\widetilde{X}} & Y & X \\
	{\Spec(L)} &&& {\Spec(L)}
	\arrow["{F_K}", from=1-1, to=1-2]
	\arrow["f", from=1-2, to=1-3]
	\arrow["\phi", from=1-3, to=1-4]
	\arrow["x", from=2-1, to=1-1]
	\arrow["{F_L}", from=2-1, to=2-4]
	\arrow["x", from=2-4, to=1-4]
\end{tikzcd}\]

where the composition $\phi \circ f \circ F_K = F_X$ is the absolute Frobenius of $X$. Now, it is enough to show that $(F_K)_*(x_* (\Theta^1_L)) \subset \widetilde{\mathcal{F}}$. In fact, in this case, by functoriality we have the following decomposition 
\[\begin{tikzcd}
	X & {\widetilde{X}} & Y & X \\
	{\Spec(L)} && {\Spec(L)/ \Theta^1_L} & {\Spec(L)}
	\arrow["{F_K}", from=1-1, to=1-2]
	\arrow["f", from=1-2, to=1-3]
	\arrow["\phi", from=1-3, to=1-4]
	\arrow["{x}", from=2-1, to=1-1]
	\arrow["{F_L}", from=2-1, to=2-3]
	\arrow["\exists \, \widetilde{x}", from=2-3, to=1-3]
	\arrow["{\mathrm{Id}}", from=2-3, to=2-4]
	\arrow["x", from=2-4, to=1-4]
\end{tikzcd}, \]
which shows that the point $x$ lifts to $Y$. 
Now, the inclusion $(F_K)_*{x}_* (\Theta^1_L) \subset \widetilde{\mathcal{F}}$ is equivalent to prove $((F_K)_*{x}_* (\Theta^1_L), \widetilde{\omega}) = 0$ for every local section $\widetilde{\omega}$ of $\widetilde{\mathcal{C}}= \ker(\widetilde{\mathcal{F}})$. 
As $\mathcal{C}$ is globally generated, it is sufficient to prove $((F_K)_*{x}_* (\Theta^1_L), \widetilde{\omega}) = 0$ for every $\widetilde{\omega} \in H^0(\widetilde{X}, \widetilde{\mathcal{C}})$. Fix $\widetilde{\omega} \in H^0(\widetilde{X}, \widetilde{\mathcal{C}})$. For every $\theta \in \Theta^1_L$, by adjunction we have $((F_K)_* x_* (\theta), \widetilde{\omega}) = (\theta, x^* F_K^*\widetilde{\omega})$. Since $F_K^* (\widetilde{\mathcal{C}}) \subset \mathcal{C}$ holds by functoriality, we conclude that $x^* F_K^*\widetilde{\omega}=0$ by assumption (2). Thus, $((F_K)_*{x}_* (\Theta^1_L), \widetilde{\omega}) = 0$, which concludes the proof.
\end{proof}
\subsection{Failure of Kodaira vanishing and the Brauer--Manin obstruction}
In this section we draw the connection between the failure of Kodaira vanishing theorem and the Brauer--Manin obstruction.

We start by recalling how the failure of Kodaira vanishing theorem for $H^1$ implies the failure of Bogomolov--Sommese vanishing proven in \cite[2.1, Critère, p.178]{Szp79} (see also \cite[Theorem 2.22]{Lan09}). 

\begin{lemma} \label{lem: vanishing_criterion}
    Let $X$ be a smooth proper $K$-variety and let $\mathcal{A}$ be a line bundle on $X$. 
    \begin{enumerate}
        \item If $H^0(X, \mathcal{A}^{-\otimes p})=0$ then $H^0(X, B^1_X \otimes \mathcal{A}^{-1}) \neq 0 \implies H^1(X, \mathcal{A}^{-1}) \neq 0$.
        \item If $H^1(X, \mathcal{A}^{-\otimes p})=0$ then $H^1(X,  \mathcal{A}^{-1}) \neq 0 \implies H^0(X, B_X^1\otimes \mathcal{A}^{-1}) \neq 0$.
    \end{enumerate}
\end{lemma}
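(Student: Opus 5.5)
The plan is to tensor the standard short exact sequence
\[ 0 \to \mathcal{O}_X \to F_*\mathcal{O}_X \xrightarrow{F_*d} B^1_X \to 0 \]
with $\mathcal{A}^{-1}$ and take cohomology. This sequence is exact because the kernel of $d$ on $\mathcal{O}_X$ is $\mathcal{O}_X^p$, i.e. $Z^0_X = \mathcal{O}_X$ under the identification via Frobenius, and $B^1_X$ is by definition the image of $F_*d$. By the projection formula, $F_*\mathcal{O}_X \otimes \mathcal{A}^{-1} \cong F_*(F^*\mathcal{A}^{-1}) = F_*(\mathcal{A}^{-\otimes p})$. Since $F$ is affine (here finite, as $K$ is $F$-finite), we also have $H^i(X, F_*(\mathcal{A}^{-\otimes p})) = H^i(X, \mathcal{A}^{-\otimes p})$ as abelian groups. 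This gives the long exact sequence
\[ H^0(X,\mathcal{A}^{-\otimes p}) \to H^0(X, B^1_X\otimes \mathcal{A}^{-1}) \xrightarrow{\delta} H^1(X,\mathcal{A}^{-1}) \to H^1(X,\mathcal{A}^{-\otimes p}). \]

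For (1), the vanishing $H^0(X,\mathcal{A}^{-\otimes p})=0$ makes $\delta$ injective. Hence a nonzero element of $H^0(X, B^1_X\otimes\mathcal{A}^{-1})$ maps to a nonzero class in $H^1(X,\mathcal{A}^{-1})$.

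For (2), the vanishing $H^1(X,\mathcal{A}^{-\otimes p})=0$ makes $\delta$ surjective. So if $H^1(X,\mathcal{A}^{-1})\neq 0$, then its preimage under $\delta$ is nonzero, and hence $H^0(X,B^1_X\otimes\mathcal{A}^{-1})\neq0$.

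The only point requiring care is the setting of absolute differentials over an imperfect field. I have to check that the kernel of $F_*d\colon F_*\mathcal{O}_X\to F_*\Omega^1_X$, with $d$ the absolute differential, is exactly $\mathcal{O}_X^p$. This uses that $X$ is smooth over the $F$-finite field $K$: locally $\mathcal{O}_X$ has a $p$-basis consisting of a $p$-basis of $K$ together with local coordinates, and a function annihilated by all the dual derivations is a $p$-th power. The twisting is harmless because $\mathcal{A}^{-1}$ is locally free. The remaining identifications are routine, so I expect no real obstacle.
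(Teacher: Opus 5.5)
Your proposal is correct and is essentially the paper's own argument. You tensor $0 \to \mathcal{O}_X \to F_*\mathcal{O}_X \to B^1_X \to 0$ with $\mathcal{A}^{-1}$, use the projection formula to identify the middle term with $F_*(\mathcal{A}^{-\otimes p})$, and read off (1) and (2) from the injectivity or surjectivity of the connecting map. Your added check that $\ker(d)=\mathcal{O}_X^p$ for absolute differentials, via a local $p$-basis, justifies an exactness step that the paper leaves implicit.
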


\begin{proof}
    Tensoring the short exact sequence
    $ 0 \to \mathcal{O}_X \to  F_{X*} \mathcal{O}_X \to B^1_X \to 0$
    by $\mathcal{A}^{-1}$ and applying the projection formula we obtain
    $$ 0 \to \mathcal{A}^{-1} \to F_{X*} (\mathcal{A}^{-\otimes p}) \to B^1_X \otimes \mathcal{A}^{-1} \to 0. $$ 
    Considering the long exact sequence in cohomology and using that $F$ is an affine morphism, the statements (1) and (2) are immediate. 
\end{proof}
Point (1) says that to produce a counterexample to Kodaira vanishing it is enough to construct a global section of $B^1_X \otimes \mathcal{A}^{-1}$ for an ample line bundle $\mathcal{A}$. 
Using Serre vanishing one can apply Lemma \ref{lem: vanishing_criterion} above for every sufficiently large $p$-power of $\mathcal{A}$:

\begin{proposition} \label{prop: kodaira_implies_subsheaf}
     Let $X$ be a smooth projective variety over $K$.
     If $\mathcal{A}$ is an ample line bundle on $X$ such that $H^1(X, \mathcal{A}^{-1})\neq 0$, then there exists $n \geq 0$, an injective homomorphism $\mathcal{A}^{\otimes p^n} \to B^1_X$ which induces by adjunction $\mathcal{A}^{\otimes  p^{n+1}} \to\Omega_X^1$. 
\end{proposition}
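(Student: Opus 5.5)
We will iterate Lemma \ref{lem: vanishing_criterion}(2) along the powers $\mathcal{A}^{\otimes p^m}$, using Serre vanishing to stop the iteration. Since $X$ is projective over $K$ and $\mathcal{A}$ is ample, Serre duality (for $X$ smooth projective of dimension $d$) gives $H^1(X,\mathcal{A}^{-\otimes p^m}) \cong H^{d-1}(X, \omega_{X/K}\otimes \mathcal{A}^{\otimes p^m})^\vee$. Serre vanishing then shows this is $0$ for $m \gg 0$; when $d=1$ we instead use that $H^0(X,\omega_{X/K}\otimes\mathcal{A}^{\otimes p^m})$ is dual to $H^1$ of the negative power, and this vanishes by degree reasons. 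Because $H^1(X,\mathcal{A}^{-1})\neq 0$, the set of $m\geq 0$ with $H^1(X,\mathcal{A}^{-\otimes p^m})\neq 0$ is nonempty and bounded above. Let $n$ be its maximum. Then $H^1(X,\mathcal{A}^{-\otimes p^n})\neq 0$ and $H^1(X,\mathcal{A}^{-\otimes p^{n+1}})=0$.

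Next we apply Lemma \ref{lem: vanishing_criterion}(2) to the line bundle $\mathcal{A}^{\otimes p^n}$, noting that $(\mathcal{A}^{\otimes p^n})^{-\otimes p} = \mathcal{A}^{-\otimes p^{n+1}}$. This yields a nonzero section of $B^1_X\otimes \mathcal{A}^{-\otimes p^n}$, that is, a nonzero homomorphism $s\colon \mathcal{A}^{\otimes p^n}\to B^1_X$. Since $B^1_X\subset F_{X*}\Omega^1_X$ is locally free (hence torsion free) and $X$ is integral, any nonzero map from a line bundle into it is injective. So $s$ is injective.

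Finally we compose $s$ with the inclusion $B^1_X\subset F_{X*}\Omega^1_X$. Adjunction for $F_X$, together with $F_X^*\mathcal{A}^{\otimes p^n}\cong \mathcal{A}^{\otimes p^{n+1}}$, turns this into a map $\mathcal{A}^{\otimes p^{n+1}}\to \Omega^1_X$. It remains to check that the adjoint map is nonzero, hence injective since $\Omega^1_X$ is locally free. Locally, $s$ sends a generator $e$ to an exact form $df$, and the adjoint sends $e^{p}=F^*e$ to $df$ viewed in $\Omega^1_X$. Since $s\neq 0$, this is nonzero.

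The main obstacle is only the bookkeeping in the first step. Serre vanishing must be applied to the correct twist: the duality argument needs the smooth projective hypothesis, and the absolute differentials play no role there. One also has to confirm that coherence of $B^1_X$ and $F_{X*}$ holds, which uses that $K$ is $F$-finite.
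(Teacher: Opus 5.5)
For $\dim X\geq 2$ your argument is the paper's. Serre duality plus Serre vanishing give an index $n$ with $H^1(X,\mathcal{A}^{-\otimes p^n})\neq 0$ and $H^1(X,\mathcal{A}^{-\otimes p^{n+1}})=0$. Part (2) of Lemma \ref{lem: vanishing_criterion}, applied to $\mathcal{A}^{\otimes p^n}$, then gives the nonzero section of $B^1_X\otimes\mathcal{A}^{-\otimes p^n}$; the paper's text cites (1), but (2) is the implication actually used. Adjunction for $F_X$ finishes. Taking $n$ to be the maximal such index is slightly cleaner than the paper's ``$m>0$'', since $n=0$ can occur.

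The genuine problem is your sentence about $d=1$, which is false. On a curve of genus $g$, the bundle $\omega_{X/K}\otimes\mathcal{A}^{\otimes p^m}$ has degree $2g-2+p^m\deg\mathcal{A}$. Riemann--Roch then gives $h^1(X,\mathcal{A}^{-\otimes p^m})=h^0(X,\omega_{X/K}\otimes\mathcal{A}^{\otimes p^m})=p^m\deg\mathcal{A}+g-1$, which grows with $m$. So the set of $m$ with nonvanishing $H^1$ is unbounded and has no maximum.

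This cannot be repaired, because the statement itself fails for curves. Take $X=\mathbb{P}^1_K$ and $\mathcal{A}=\mathcal{O}(2)$, so that $H^1(X,\mathcal{A}^{-1})\cong K\neq 0$. Use the sequence $0\to\mathcal{O}_X\otimes_K\Omega^1_K\to\Omega^1_X\to\Omega^1_{X/K}\to 0$, where $\Omega^1_{X/K}\cong\mathcal{O}(-2)$ and $\mathcal{O}_X\otimes_K\Omega^1_K$ is trivial. It gives $\Hom(\mathcal{O}(2p^{n+1}),\Omega^1_X)=0$ for every $n\geq 0$, so no injection $\mathcal{A}^{\otimes p^n}\to B^1_X$ exists. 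More generally, on a curve the conclusion forces $p^{n+1}\deg\mathcal{A}\leq 2g-2$.

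The correct hypothesis is $\dim X\geq 2$, which the paper tacitly assumes. Its proof also just invokes Serre vanishing, which only kills $H^1$ of negative twists in dimension at least $2$, and it is only ever applied to surfaces. With that hypothesis added and your $d=1$ sentence deleted, your proof is complete.
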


\begin{proof}
    By Serre vanishing, there exists $m > 0$ such that $H^1(X, \mathcal{A}^{-\otimes p^{m}}) \neq 0 $ and $H^1(X, \mathcal{A}^{-\otimes p^{m+1}}) = 0 $.
    By item (1) of Lemma \ref{lem: vanishing_criterion}, there exists a non-zero section of $H^0(X, B^1_X \otimes \mathcal{A}^{-\otimes p^{m}})$.  
    In particular, there exists an injective homomorphism $\mathcal{A}^{\otimes p^m} \to F_{X*}\Omega^1_X$ which in turn gives by adjunction an injective homomorphism $F_X^* \mathcal{A}^{\otimes p^m} \cong \mathcal{A}^{\otimes p^{m+1}}\to \Omega_X^1$.
\end{proof}
For an ample line bundle $\mathcal{A}$, we define $n(\mathcal{A})$ as the minimum $n\geq0$ such that $H^1(X, \mathcal{A}^{- \otimes p^{n}})= 0$. So $\mathcal{A}$ yields a counterexample to Kodaira vanishing in degree 1 if and only if $n(\mathcal{A}) > 0$. We now show that invertible subsheaves of $B^1_X$ naturally define $p$-closed foliations: 

\begin{proposition} \label{prop: p-closed-foliations}
Let $j \colon \mathcal{A} \to B_X^1 \subset F_*\Omega_X^1$ be an invertible subsheaf. The natural morphism  $F^*\mathcal{A} \cong \mathcal{A}^{\otimes p} \subset \Omega_X^1$ given by adjuntion defines a $p$-closed foliation $\mathcal{D} = \ker (\mathcal{A}^{\otimes p}) \subset \Theta_{X}$ of co-rank 1. Moreover if $\mathcal{A}$ is ample, then $\mathcal{A}^{\otimes p} \rightarrow \Omega^1_X \rightarrow \Omega^1_{X/K}$ is injective and hence $\ker(\mathcal{A}^{\otimes p})$ is naturally a $K$-linear foliation.
\end{proposition}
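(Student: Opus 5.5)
The argument is local, so we work on an affine open $U=\Spec(A)$ where $\mathcal{A}$ is trivialised by a generator $e$ and $j(e)=df$ for some $f\in A$. This is possible because $B^1_X$ consists of locally exact forms, i.e. images of $F_*d$. The adjoint map $F^*\mathcal{A}\to\Omega^1_X$ sends $e^{p}\mapsto df$, so locally the image of $\mathcal{A}^{\otimes p}$ in $\Omega^1_X$ is $A\cdot df$. Since $j$ is injective, $df\neq 0$. The saturation of this image defines the kernel distribution
\[
\mathcal{D}=\ker(\mathcal{A}^{\otimes p})=\{D\in\Theta_X : D(f)=0\}.
\]
This uses $\langle D, df\rangle = D(f)$, and the definition should be read generically and then saturated. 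As the annihilator of a rank one subsheaf, $\mathcal{D}$ is saturated of co-rank $1$: the quotient $\Theta_X/\mathcal{D}$ embeds into $\mathcal{H}om(\mathcal{A}^{\otimes p},\mathcal{O}_X)$, which is torsion-free.

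Next comes closedness. If $D_1(f)=D_2(f)=0$, then $[D_1,D_2](f)=D_1D_2f-D_2D_1f=0$. Moreover $D^p$ is a derivation in characteristic $p$, and $D^p(f)=D^{p-1}(D f)=0$. Hence $\mathcal{D}$ is closed under brackets and $p$-powers on a dense open. Saturatedness then extends this everywhere: if $gD$ lies in $\mathcal{D}$ generically then $D$ does, and the bracket and $p$-power of sections of $\mathcal{D}$ kill $f$ on the open set where $\mathcal{D}$ is cut out by $df$. An alternative is to note that $\mathcal{D}$ is the foliation attached to the inseparable morphism locally given by $A\supset A^p[f]$ (resp. its saturation). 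It is then $p$-closed because it equals the derivations killing the subring $A^p[f]$, and such annihilators are always Lie- and $p$-closed.

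The main point is the last claim. Let $\sigma$ be the composite $\mathcal{A}^{\otimes p}\to\Omega^1_X\to\Omega^1_{X/K}$. Its kernel is $\mathcal{A}^{\otimes p}\cap\pi^*\Omega^1_K$. Since $\mathcal{A}^{\otimes p}$ is invertible and $X$ is integral, $\sigma$ is either injective or zero. So it suffices to rule out $\sigma=0$. If $\sigma=0$, the line bundle $\mathcal{A}^{\otimes p}$ embeds into $\pi^*\Omega^1_K\cong\mathcal{O}_X^{\oplus \pdeg K}$. Then some coordinate projection gives a nonzero map $\mathcal{A}^{\otimes p}\to\mathcal{O}_X$, i.e. a nonzero section of $\mathcal{A}^{-\otimes p}$. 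This is impossible for $\mathcal{A}$ ample on a projective variety of positive dimension, since $\mathcal{A}^{-p}$ has negative degree on curves.

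Hence $\sigma$ is injective. Then $\mathcal{D}$ contains $\Theta_K\otimes\mathcal{O}_X$ only partially, and we instead take the $K$-linear foliation $\mathcal{D}\cap\Theta_{X/K}=\ker(\mathcal{A}^{\otimes p}\to\Omega^1_{X/K})$. It is still of co-rank $1$ in $\Theta_{X/K}$, because $\sigma\neq 0$. It is again saturated, Lie-closed and $p$-closed by the same computation, since $\Theta_{X/K}$ is itself closed under both operations. The only delicate point, which I expect to be the main obstacle, is the bookkeeping at points where $df$ vanishes: there the saturation of $A\,df$ differs from $A\,df$. This is handled uniformly by the torsion-freeness argument given above.
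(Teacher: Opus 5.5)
Your proof is correct and has the same structure as the paper's. It takes a local generator of $\mathcal{A}$ and proves bracket- and $p$-closure of its annihilator. It then gets injectivity into $\Omega^1_{X/K}$ because a nonzero map $\mathcal{A}^{\otimes p}\to\Omega^1_K\otimes_K\mathcal{O}_X$ would give a nonzero section of $\mathcal{A}^{\otimes -p}$.

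The only real difference is the closure step. The paper keeps the generator $\omega$ abstract and uses only two properties of it:
\begin{itemize}
\item $d\omega=0$, through $d\omega(D_1,D_2)=D_1\omega(D_2)-D_2\omega(D_1)-\omega([D_1,D_2])$;
\item $C(\omega)=0$, through Seshadri's identity $(C(\omega)(D))^p=\omega(D^{[p]})-D^{p-1}(\omega(D))$.
\end{itemize}
You instead write $\omega=df$ on an affine chart. Both closures then reduce to $[D_1,D_2](f)=0$ and $D^p(f)=D^{p-1}(Df)=0$, with no Cartier operator needed. Since $B^1_X=\ker(C\colon Z^1_X\to\Omega^1_X)$, the paper's formulation is just the intrinsic version of your computation and gains no generality.

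Two minor comments.
\begin{itemize}
\item The ``generically, then saturate'' hedging is superfluous. On $U$ the kernel is exactly $\{D : D(f)=0\}$, which is saturated by your own torsion-freeness argument.
\item Your explicit hypothesis that $X$ is projective of positive dimension is genuinely needed in the last step; the paper's ``by ampleness'' uses it only tacitly. For instance, take $X=\mathbb{A}^1_K$, $\mathcal{A}=\mathcal{O}_X$ and $j(1)=dt$ with $t\in K\setminus K^p$: the composite to $\Omega^1_{X/K}$ vanishes.
\end{itemize}
Your reading of the final claim as the co-rank-one $K$-linear foliation $\mathcal{D}\cap\Theta_{X/K}$ agrees with what the paper actually proves.
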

\begin{proof}
We show that $\mathcal{D}$ is closed under Lie brackets and $p$-closed. For this, we can work locally and suppose that $\mathcal{A}$ is generated by a locally exact $1$-form $\omega$ and $\mathcal{D}_X=\ker(\omega)$.
    As $\omega \in B^1_X \subset Z^1_X$, we have $d \omega=0$ and therefore for every $D_1, D_2 \in \mathcal{D}$ we have
    $$0=d\omega(D_1,D_2)=D_1(\omega(D_2))-D_2(\omega(D_1))-\omega([D_1, D_2])=-\omega([D_1,D_2]). $$
    This shows that $\mathcal{D}$ is closed under Lie brackets. Now, as $\omega \in B^1_X$, we have $C(\omega)=0$. By \cite[Proposition 3]{Seshadri1958-1959}, we have for every $D \in \mathcal{D}$ that
    $$ 0=(C(\omega)(D))^p= \omega(D^{[p]})-D^{[p-1]}(\omega(D))=\omega(D^{[p]}), $$
    which shows that $\mathcal{D}$ is closed under the $p$-power operation.
    Thus $\mathcal{D}$ defines a $p$-closed foliation of corank 1. We now prove the remaining claim. Consider the natural composition ${\mathcal{A}}^{\otimes p} \to \Omega^1_{{X}} \to \Omega^1_{{X}/K}$, whose image induces the $K$-linear $p$-closed foliation $\Theta_{X/K} \cap \ker(\mathcal{A}^{\otimes p})$. We are left to show that the composition is injective, and thus that $\ker(\mathcal{A}^{\otimes p}) \subset \Theta_{X/K} $ defines a $K$-linear foliation of co-rank 1. But if it were not injective, since $\mathcal{A}$ is a line bundle, it would factorise through the kernel $\Omega^1_K \otimes \mathcal{O}_X$ of $\Omega^1_X \rightarrow \Omega^1_{X/K}$. But $\Hom(\mathcal{{L}}^{\otimes p}, \Omega^1_K \otimes \mathcal{O}_{{X}})=H^0({X}, \mathcal{{L}}^{\otimes -p} \otimes (\Omega^1_K \otimes \mathcal{O}_{{X}}) )=0$ by ampleness.
\end{proof}

\begin{proposition}[Descending to $K^p$]\label{prop: foliaton_on_descent}
    Let $X$ be a smooth proper $K$-variety, and $\mathcal{A}$ an ample line bundle on $X$ such that $H^1(X, \mathcal{A}^{-1})=0$.
    Suppose that $(X, \mathcal{A})$ descend to $(\widetilde{X}, \mathcal{\widetilde{L}})$.
    Then for $n = n(\mathcal{A})$ there is a natural inclusion $\widetilde{\mathcal{A}}^{\otimes p^{ n+1}} \to \Omega^1_{\widetilde{X}/K}$ which defines a $K$-linear $p$-closed foliation of co-rank 1.
\end{proposition}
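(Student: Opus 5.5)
I will read the hypothesis as $H^1(X,\mathcal{A}^{-1})\neq 0$, since otherwise $n(\mathcal{A})=0$ and there is no failure of Kodaira vanishing to exploit. I will also read $\widetilde{\mathcal{L}}$ as the model $\widetilde{\mathcal{A}}$ of $\mathcal{A}$. The plan is to run Proposition \ref{prop: kodaira_implies_subsheaf} and Proposition \ref{prop: p-closed-foliations} on the model $\widetilde{X}$ rather than on $X$. To do this, I first need to transport all relevant hypotheses from $(X,\mathcal{A})$ to $(\widetilde{X},\widetilde{\mathcal{A}})$.

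\textbf{Step 1: transport the hypotheses to the model.} The base change $F_K\colon \Spec(K)\to\Spec(K)$ is a (faithfully flat) field extension, and $X\cong \widetilde{X}\times_{F_K}K$ with $\mathcal{A}$ the pullback of $\widetilde{\mathcal{A}}$.
\begin{itemize}
\item Smoothness and properness descend along fpqc base change, so $\widetilde{X}$ is a smooth proper $K$-variety.
\item Ampleness descends along field extensions, so $\widetilde{\mathcal{A}}$ is ample.
\item Flat base change gives $H^i(\widetilde{X},\widetilde{\mathcal{A}}^{-\otimes m})\otimes_{F_K}K\cong H^i(X,\mathcal{A}^{-\otimes m})$ for all $i$ and $m$. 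Hence each cohomology group vanishes on $\widetilde{X}$ if and only if the corresponding one vanishes on $X$, and so $n(\widetilde{\mathcal{A}})=n(\mathcal{A})=n>0$.
\end{itemize}

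\textbf{Step 2: produce the inclusion.} Because $n$ is minimal, we have $H^1(\widetilde{X},\widetilde{\mathcal{A}}^{-\otimes p^{n}})=0$ while the previous $p$-power has nonvanishing $H^1$. Item (2) of Lemma \ref{lem: vanishing_criterion}, applied to the appropriate $p$-power of $\widetilde{\mathcal{A}}$ on $\widetilde{X}$, yields a nonzero section of $B^1_{\widetilde{X}}\otimes\widetilde{\mathcal{A}}^{-\otimes p^{m}}$, with $m$ indexed as in Proposition \ref{prop: kodaira_implies_subsheaf}. This section is an injective map $\widetilde{\mathcal{A}}^{\otimes p^{m}}\to B^1_{\widetilde{X}}$. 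Adjunction for $F_{\widetilde{X}}$ then gives the map $\widetilde{\mathcal{A}}^{\otimes p^{m+1}}\to\Omega^1_{\widetilde{X}}$ of the statement. I would do the index bookkeeping carefully here, matching the exponent $p^{n+1}$ to the convention of Proposition \ref{prop: kodaira_implies_subsheaf}.

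\textbf{Step 3: get a $K$-linear foliation.} Apply Proposition \ref{prop: p-closed-foliations} on $\widetilde{X}$ with the ample invertible subsheaf $\widetilde{\mathcal{A}}^{\otimes p^m}\subset B^1_{\widetilde{X}}$. This shows that the kernel is a $p$-closed foliation of corank $1$. Ampleness forces $\Hom(\widetilde{\mathcal{A}}^{\otimes p^{m+1}},\Omega^1_K\otimes\mathcal{O}_{\widetilde{X}})=0$, so the composite $\widetilde{\mathcal{A}}^{\otimes p^{m+1}}\to\Omega^1_{\widetilde{X}}\to\Omega^1_{\widetilde{X}/K}$ is injective. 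Consequently the kernel in $\Theta_{\widetilde{X}/K}$ is a $K$-linear $p$-closed foliation of corank $1$.

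\textbf{Main obstacle.} The only delicate point is Step 1: one must check that the absolute sheaves $B^1$ and the cohomological index $n$ behave correctly on the model. The absolute sheaves cause no trouble, because the whole argument is run intrinsically on $\widetilde{X}$, and $\widetilde{X}$ is itself a smooth variety over the $F$-finite field $K$. The index $n$ is handled by the flat base change computation. Finally, \emph{natural} should be understood as: determined by the chosen nonzero section of $B^1_{\widetilde{X}}\otimes\widetilde{\mathcal{A}}^{-\otimes p^m}$.
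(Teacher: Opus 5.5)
Your proposal is correct and is essentially the paper's argument. Both run Proposition \ref{prop: kodaira_implies_subsheaf} and Proposition \ref{prop: p-closed-foliations} on the model $\widetilde{X}$, then use ampleness to get $\Hom(\widetilde{\mathcal{A}}^{\otimes p^{m+1}}, \Omega^1_K\otimes\mathcal{O}_{\widetilde{X}})=0$, which gives injectivity into $\Omega^1_{\widetilde{X}/K}$. Your Step 1 (flat base change for ampleness and for $n(\widetilde{\mathcal{A}})=n(\mathcal{A})$) only makes explicit what the paper uses tacitly.

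On the bookkeeping you deferred: with $n=n(\mathcal{A})$ you have $m=n-1$, so the map you produce is $\widetilde{\mathcal{A}}^{\otimes p^{n}}\to\Omega^1_{\widetilde{X}}$, which is the exponent used in Theorem \ref{thm: criterio_Kodaira_adelic_rational}(a). Moreover, Lemma \ref{lem: vanishing_criterion}(1) together with $H^1(\widetilde{X},\widetilde{\mathcal{A}}^{-\otimes p^{n}})=0$ rules out any nonzero map $\widetilde{\mathcal{A}}^{\otimes p^{n}}\to B^1_{\widetilde{X}}$. So the exponent $p^{n+1}$ in the statement cannot be matched; it must be corrected to $p^{n}$, or $n$ must be read as the index of Proposition \ref{prop: kodaira_implies_subsheaf}, namely $n(\mathcal{A})-1$.
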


\begin{proof}
    By Lemma \ref{prop: kodaira_implies_subsheaf} and Proposition \ref{prop: p-closed-foliations}, there exists a foliation $\widetilde{\mathcal{A}}^{\otimes p+1} \to \Omega^1_{\widetilde{X}}$. 
    Consider the natural composition $\widetilde{\mathcal{A}}^{\otimes p+1} \to \Omega^1_{\widetilde{X}} \to \Omega^1_{\widetilde{X}/K}$, whose image induces a $K$-linear $p$-closed foliation. 
    We are left to show that the composition is injective, and thus it defines a $K$-linear foliation of co-rank 1.
    If it is not injective, as $\mathcal{A}$ is a line bundle, it would factorise through the kernel $\Omega^1_K \otimes \mathcal{O}_X$.
    As $\Hom(\mathcal{\widetilde{L}}^{\otimes p+1}, \Omega^1_K \otimes \mathcal{O}_{\widetilde{X}})=H^0(\widetilde{X}, \mathcal{\widetilde{L}}^{\otimes -p-1} \otimes (\Omega^1_K \otimes \mathcal{O}_{\widetilde{X}}) )=0$ by ampleness of $\mathcal{A}$, we conclude. 
\end{proof}
The following theorem summarize what we have done until now:
\begin{theorem} \label{thm: criterio_Kodaira_adelic_rational}
Let $(X, \mathcal{A})$ be a smooth projective variety together with an ample line bundle over a global function field $K$. Assume that $(X, \mathcal{A})$ descends to $(\widetilde{X}, \widetilde{\mathcal{A}})$ over $K^p$. Assume moreover one of the following hold: 
\begin{enumerate}
    \item [(a)] $H^1(X, \mathcal{A}^{-1}) \neq 0$ and $\mathcal{A}^{\otimes p^n}$ is globally generated for $n = n(\mathcal{A})$;
    \item [(b)] $H^0(X, B^1_X \otimes \mathcal{A}^{-1}) \neq 0$ and $\mathcal{A}^{\otimes p}$ is globally generated.
\end{enumerate} 
Let $Y$ be the quotient of the foliation $\ker(\mathcal{A}^{\otimes p^n})$ in case (a) (resp. $\ker(\mathcal{A}^{\otimes p})$ in case (b)) of $\widetilde{X}$. 
Let 
\[
\phi \colon Y \rightarrow X
\]
be the purely inseparable morphism given by Proposition \ref{prop: factorization_frob}. Finally, let 
\[
\psi_{\mathcal{A}^{\otimes p^n}} \colon X \rightarrow \mathbb{P}^N_K
\]
(resp. by $\psi_{\mathcal{A}^{\otimes p}}$) be the morphism induced by $\mathcal{A}^{\otimes p^n}$ (resp. $\mathcal{A}^{\otimes p}$). Then the following hold:
\begin{enumerate}
    \item[(i)] if $(x_v) \in X(\mathbf{A}_K)^{\mathrm{Br}(X)[p]}$ does not belong to the image of the induced map $Y(\mathbf{A}_K) \rightarrow X(\mathbf{A}_K)$, then $
\psi_{\mathcal{A}^{\otimes p^n}}((x_v)) \in \mathbb{P}^N_K(K)
\quad \text{(resp. } \psi_{\mathcal{A}^{\otimes p}}((x_v)) \in \mathbb{P}^N_K(K)\text{)}.$
 \item[(ii)] If furthermore $\mathcal{A}^{\otimes p^n}$ (resp. $\mathcal{A}^{\otimes p}$)  separates points, then $(x_v) \in X(K)$ and therefore
\[
\bigl(X(\mathbf{A}_K) \setminus \phi({Y}(\mathbf{A}_K))\bigr)^{\mathrm{Br}(X)[p]} = X(K) \setminus \phi({Y}(K)).
\]
\end{enumerate}
\end{theorem}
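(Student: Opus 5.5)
The plan is to combine three inputs: the line subbundle of $\Omega^1_{X/K}$ coming from the failure of Kodaira vanishing, the criterion of \cite[Theorem 1.4]{Val24} recalled in the introduction, and Theorem \ref{prop: equivalence-star-condition}, which turns condition $(\star)$ into non-liftability to $Y$. I treat case (b) in detail. Case (a) is the same argument with $\mathcal{A}$ replaced by $\mathcal{A}^{\otimes p^{n-1}}$. Indeed, Lemma \ref{lem: vanishing_criterion}(1) (as used in Proposition \ref{prop: kodaira_implies_subsheaf}) produces a nonzero section of $B^1_X\otimes \mathcal{A}^{-\otimes p^{n-1}}$, and then $\mathcal{L}:=\mathcal{A}^{\otimes p^{n-1}}$ satisfies the hypotheses of (b) with $\mathcal{L}^{\otimes p}=\mathcal{A}^{\otimes p^n}$.

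\emph{Step 1: the subsheaf and the foliation.} A nonzero section of $B^1_X\otimes\mathcal{A}^{-1}$ gives an injection $\mathcal{A}\to B^1_X$. By Proposition \ref{prop: p-closed-foliations}, adjunction gives $\mathcal{A}^{\otimes p}\subset\Omega^1_X$. Since $\mathcal{A}$ is ample, the composite to $\Omega^1_{X/K}$ is injective, and $\mathcal{F}:=\ker(\mathcal{A}^{\otimes p})\subset\Theta_{X/K}$ is a $K$-linear $p$-closed foliation of corank one.

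Next I check that $\mathcal{F}$ descends to $\widetilde X$. Since $(X,\mathcal{A})$ descends to $(\widetilde X,\widetilde{\mathcal{A}})$, I run the same construction on $\widetilde X$, using Proposition \ref{prop: foliaton_on_descent}, to get $\widetilde{\mathcal{F}}$. Base change along $F_K$ then recovers $\mathcal{F}$; I would verify this locally via Lemma \ref{lem: explicit_description_descent_der}. Uniqueness of the model (Lemma \ref{lem: uniqueness_descent_foliation}) makes the descent well defined, and $Y=\widetilde X/\widetilde{\mathcal{F}}$ with $\phi$ comes from Proposition \ref{prop: factorization_frob}.

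\emph{Step 2: identify $\mathcal{C}$.} I would show that the annihilator $\mathcal{C}\subset\Omega^1_{X/K}$ of $\mathcal{F}$ is the saturation of $\mathcal{A}^{\otimes p}$. The inclusion $H^0(\mathcal{A}^{\otimes p})\subset H^0(\mathcal{C})$ is what matters. I would either check that the image is already saturated, or work only with sections of $\mathcal{A}^{\otimes p}$. The hypothesis that $\mathcal{C}$ is globally generated in Theorem \ref{prop: equivalence-star-condition} should only be needed in the direction (2)$\Rightarrow$(1), applied to the line bundle $\mathcal{A}^{\otimes p}$, which is globally generated by assumption. \textbf{This is the step I expect to be the main obstacle.} Here I would try to prove that a corank-one foliation defined by a global-generated line subbundle whose kernel is $\mathcal{F}$ allows one to replace $\mathcal{C}$ by $\mathcal{A}^{\otimes p}$ in that theorem. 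The proof of Theorem \ref{prop: equivalence-star-condition} only needs the sections to generate the annihilator at the generic point, and global generation of $\mathcal{A}^{\otimes p}$ gives this.

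\emph{Step 3: apply \cite[Theorem 1.4]{Val24}.} Let $(x_v)\in X(\mathbf A_K)^{\Br(X)[p]}$, and suppose $(x_v)$ is not in $\phi(Y(\mathbf A_K))$. Since $X$ is proper, some $x_v$ fails to lift to $Y(K_v)$. By Theorem \ref{prop: equivalence-star-condition} with $L=K_v$, there is then $\omega\in H^0(X,\mathcal{A}^{\otimes p})\subset H^0(\Omega^1_X)$ (inclusion via the model splitting) with $x_v^*\omega\neq 0$. This is exactly condition $(\star)$, so \cite[Theorem 1.4]{Val24} gives $\psi_{\mathcal{A}^{\otimes p}}((x_v))\in\mathbb P^N(K)$, which is (i).

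For (ii), assume the linear system separates points. Then $\psi$ is injective on geometric points, so all $x_v$ map to one $K$-point $P$. The fibre $\psi^{-1}(P)$ is a single point of $X$ with residue field $K$, hence $(x_v)$ comes from $X(K)$. For the displayed equality, $X(K)\subset X(\mathbf A_K)^{\Br(X)}$ by reciprocity. Moreover, a $K$-point lifting to $Y(\mathbf A_K)$ lifts to $Y(K)$ by the criterion of Theorem \ref{prop: equivalence-star-condition}: the pullbacks $x^*\omega$ all vanish locally at $v$, hence globally in $\Omega^1_K\subset\Omega^1_{K_v}$. This gives both inclusions.
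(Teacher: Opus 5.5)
Your route is the paper's. Its proof simply combines Theorem~\ref{prop: equivalence-star-condition} with \cite[Theorem 1.4]{Val24}, and your Steps 1 and 3 write this out. The gap is in Step 2, which you rightly flag but resolve incorrectly.

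The proof of (2)$\Rightarrow$(1) in Theorem~\ref{prop: equivalence-star-condition} needs more than generation at the generic point. The vectors $(F_K)_*x_*\theta$ are paired with local sections of $\widetilde{\mathcal{C}}$ in the fibre at the image point of $x$. So global sections must generate $\widetilde{\mathcal{C}}$ (equivalently $\mathcal{C}$) at that very point.

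Write the saturation as $\mathcal{C}=\mathcal{A}^{\otimes p}(E)$ with $E\geq 0$. For $w\in E$ the map $\mathcal{A}^{\otimes p}\otimes\kappa(w)\to\Omega^1_{X/K}\otimes\kappa(w)$ is zero. Hence $x_v^*\omega=0$ for every $\omega\in H^0(X,\mathcal{A}^{\otimes p})$ as soon as $x_v$ lands in $E$, whether or not $x_v$ lifts to $Y$. For adelic points whose non-liftable components lie on $E$, condition $(\star)$ fails and \cite[Theorem 1.4]{Val24} says nothing. Your implication ``$(x_v)\notin\phi(Y(\mathbf{A}_K))\Rightarrow(\star)$'' breaks exactly there.

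Nor is $E=0$ automatic. $B^1_X$ is an $\mathcal{O}_X$-module through Frobenius, so composing $\mathcal{A}(-D)\to\mathcal{A}\to B^1_X$ for an effective divisor $D$ multiplies the image in $\Omega^1_X$ by the $p$-th power of a local equation of $D$.

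So the first alternative you mention is the one that is needed. The image of $\mathcal{A}^{\otimes p}$ in $\Omega^1_{X/K}$ must be saturated. Then $\mathcal{C}=\mathcal{A}^{\otimes p}$, and global generation of $\mathcal{A}^{\otimes p}$ is exactly the hypothesis Theorem~\ref{prop: equivalence-star-condition} requires. The paper's two-line proof uses this tacitly. For Raynaud surfaces it is supplied by Proposition~\ref{prop: foliationpclosed-kodaira}, which identifies the image with the saturation $f^*\Omega^1_{C/K}((d-1)\Sigma)$. In the generality of the statement you must either prove saturation or add it as a hypothesis.

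Two smaller points:
\begin{enumerate}
\item The item of Lemma~\ref{lem: vanishing_criterion} producing a section of $B^1_X\otimes\mathcal{A}^{-\otimes p^{n-1}}$ is (2), not (1).
\item In (ii), separating points only makes $\psi$ radicial; for Raynaud surfaces it is purely inseparable of degree $p$ on the fibres of $f$. So $\psi^{-1}(P)$ is a single point whose residue field is purely inseparable over $K$. It equals $K$ because it embeds into $K_v$ and $K_v/K$ is separable, and you should say so. The same argument applied to the radicial map $\phi$ shows directly that a $K$-point lifting to $Y(K_v)$ lifts to $Y(K)$. You then need not invoke Theorem~\ref{prop: equivalence-star-condition}, and its global-generation hypothesis, a second time.
\end{enumerate}
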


\begin{proof}
    Note that $\mathcal{A}^{\otimes p} \subset \Omega^1_{\widetilde{X}}$ descends to $\widetilde{X}$ and it defines $\widetilde{\mathcal{F}}$ which is a $p$-closed foliation by Proposition \ref{prop: p-closed-foliations}.
    Then we can combine Proposition \ref{prop: equivalence-star-condition} with \cite[Theorem 1.4]{Val24} to conclude the statement.
\end{proof}

\begin{remark}
Note that the variety $Y$ constructed in Theorem \ref{thm: criterio_Kodaira_adelic_rational} may not be regular. 
\end{remark}
\section{Raynaud surfaces} \label{sec: Raynaud}

We now aim to apply the previous theory to the case of Raynaud surfaces over a global field $K$.
We will show that there are Raynaud surfaces $X$ with an ample line bundle $\mathcal{A}$ that violates Kodaira vanishing for which $\mathcal{A}^{\otimes p}$ separates points. We start by recalling the construction of Raynaud surfaces starting from Tango structures on curves that descend to $K^p$. We then describe geometrically the failure of Kodaira vanishing for an explicit ample line bundle associated to a special $p$-closed foliation on $X$. We finally prove Theorem \ref{main thm raynaud end paper} on the Brauer--Manin set of Raynaud surfaces.

\subsection{Tango curves} \label{subsection: tango}
Tango curves were introduced in \cites{Tan72} to study the behavior of extension of vector bundles after the pullback by the Frobenius morphism. 
Here we recall their definition.
Let $C$ be a smooth geometrically connected projective curve over $K$ and let $\mathcal{L} \in \Pic(C)$ be an ample line bundle.
Assume that we can find $0 \neq \alpha \in H^1(C, \mathcal{L}^{-1})$ such that $F_C^*(\alpha)=0$. We say $(\mathcal{L}, \alpha)$ is a \emph{pre-Tango structure} on $C$, which can be characterised via differential forms as follows:
\begin{lemma}
A pre-Tango structure $(\mathcal{L}, \alpha)$ is equivalent to a nontrivial homomorphism $\varphi_{\alpha} \colon \mathcal{L} \to B^1_{C}$. If moreover $\mathcal{L} \cong  \mathcal{O}_C(D)$ where $D$ is an effective divisor, then $\alpha$ corresponds to the datum of a locally exact 1-form $\omega_{\alpha} \in H^0(X, B_{C}^1(-D))$ with zeroes along $D$.
\end{lemma}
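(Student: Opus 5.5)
The plan is to run the long exact sequence of Lemma \ref{lem: vanishing_criterion} on the curve $C$. Tensor
\[ 0 \to \mathcal{O}_C \to F_{C*}\mathcal{O}_C \to B^1_C \to 0 \]
by $\mathcal{L}^{-1}$ and use the projection formula to obtain
\[ 0 \to \mathcal{L}^{-1} \to F_{C*}(\mathcal{L}^{-\otimes p}) \to B^1_C\otimes\mathcal{L}^{-1} \to 0. \]
Since $F_C$ is affine, $H^1(C, F_{C*}\mathcal{L}^{-\otimes p}) = H^1(C,\mathcal{L}^{-\otimes p})$. Under this identification, the map $H^1(C,\mathcal{L}^{-1}) \to H^1(C,\mathcal{L}^{-\otimes p})$ is exactly $F_C^*$; I will check this on Čech cocycles, where the map sends $f \mapsto f^p$.

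Because $\mathcal{L}$ is ample, $H^0(C,\mathcal{L}^{-\otimes p})=0$. The long exact sequence therefore gives an isomorphism
\[ \delta \colon H^0(C, B^1_C\otimes \mathcal{L}^{-1}) \xrightarrow{\sim} \ker\bigl(F_C^* \colon H^1(C,\mathcal{L}^{-1}) \to H^1(C,\mathcal{L}^{-\otimes p})\bigr). \]
Thus nonzero classes $\alpha$ killed by Frobenius correspond bijectively to nonzero sections of $B^1_C\otimes\mathcal{L}^{-1} = \mathcal{H}om(\mathcal{L},B^1_C)$, that is, to nonzero homomorphisms $\varphi_\alpha\colon\mathcal{L}\to B^1_C$. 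For the explicit form I will unwind $\delta^{-1}$ in Čech terms. A cocycle $(\alpha_{ij})$ with $\alpha_{ij}^p = g_i - g_j$, for local sections $g_i$ of $\mathcal{L}^{-\otimes p}$, yields the well-defined section $(dg_i)$. Here $d$ kills $p$-th powers, so $dg_i = dg_j$ on overlaps after the appropriate twist.

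For the second statement, write $\mathcal{L}=\mathcal{O}_C(D)$ with $D$ effective. Then $B^1_C\otimes\mathcal{L}^{-1} = B^1_C(-D)$, which sits inside $B^1_C$ via the section $1\in H^0(\mathcal{O}_C(D))$. A section $\varphi_\alpha$ is therefore the same as a locally exact form $\omega_\alpha=\varphi_\alpha(1)\in H^0(C,B^1_C)$ lying in the subsheaf $B^1_C(-D)$, i.e.\ vanishing along $D$. I must also handle one point of the $\mathcal{O}_C$-module structure: $B^1_C$ is a module via $F_*$, so twisting by $\mathcal{O}(-D)$ means multiplying by $p$-th powers of local equations. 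Concretely, $\omega_\alpha$ vanishes to order at least $p\cdot\mathrm{mult}(D)$ as a differential. I would state the zeroes in this sense, consistent with the paper's phrasing ``with zeroes along $D$''.

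The main obstacle is the bookkeeping in identifying the connecting map with $F_C^*$ and in tracking the $F_*$-module twist. Both are routine Čech checks, and I would do them on an affine cover trivializing $\mathcal{L}$.
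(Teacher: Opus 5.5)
Your proposal is correct and is the same argument as the paper, whose proof just says the lemma is a reformulation of Lemma \ref{lem: vanishing_criterion}, i.e.\ the long exact sequence of $0 \to \mathcal{L}^{-1} \to F_{C*}(\mathcal{L}^{-\otimes p}) \to B^1_C\otimes\mathcal{L}^{-1} \to 0$ together with $H^0(C,\mathcal{L}^{-\otimes p})=0$. You spell out what the paper leaves implicit: the map on $H^1$ is $F_C^*$, so you get a bijection with the nonzero elements of $\ker F_C^*$ rather than only a nonvanishing implication; and, because of the $F_*$-module structure, sections of $B^1_C(-D)$ are exact forms vanishing to order $p\cdot D$, which matches the paper's planar-curve example.
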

\begin{proof}
This is just a reformulation of Lemma \ref{lem: vanishing_criterion}.
\end{proof}
The composition $\mathcal{L} \xrightarrow{\varphi_{\alpha}} B^1_{C} \to (F_C)_*\Omega^1_{C}$ gives rise by adjunction and composition to a homomorphism $\psi_{\alpha} \colon \mathcal{L}^{\otimes p} \to \Omega^1_{C} \to \Omega^1_{C/K}$. This is non-trivial because $\mathcal{L}$ is ample as in Proposition \ref{prop: p-closed-foliations}. 
This shows in particular that $p \deg(\mathcal{L}) \leq 2g(C)-2$. 
\begin{definition}
    We say a pre-Tango structure $(\mathcal{L}, \alpha)$ on $C$ is \emph{Tango} if $\psi_{\alpha} \colon \mathcal{L}^{\otimes p} \to \Omega^1_{C/K}$ is an isomorphism (and thus $p \deg(\mathcal{L}) = 2g(C) - 2$). 
    Given an integer $d>0$, a triple $(\mathcal{L}, \mathcal{N}, \alpha)$ is a \emph{Tango structure of index $d$} on $C$ if $(\mathcal{L}, \alpha)$ is a Tango structure and $\mathcal{N}$ is a line bundle such that $\mathcal{L} \cong {\mathcal{N}}^{\otimes d}$.
\end{definition}

To remain concrete, we now give some examples of Tango structures.
\begin{example}[Planar curves, {\cite[Example 1.3]{Muk13}}]
    If $p \geq 3$, the classical example of Tango structure of index $d$ is constructed on the planar smooth curve of degree $dp$:
    $$C=\left\{ Y^{dp}-YZ^{dp-1}-ZX^{dp-1}=0 \right\} \subset \mathbb{P}^2_{K}.$$
    Note that $C \cap \left\{Z=0\right\} = dp [1:0:0]$.  Denote $Q=[1:0:0]$.
    On the affine chart $Z \neq 0$, $C \setminus Q$ is described by the Artin--Schreier type equation $(y^{dp}-y-x^{dp-1}=0)$.
    The differential $dx$ has no zeroes nor poles on $C \setminus Q$ and thus it defines an exact global 1-form $d\left(\frac{X}{Z} \right) \in H^0(C, \omega_{C})$ with associated divisor $dp(dp-3)Q$.
    In particular, it defines a Tango structure $\alpha$ on $\mathcal{L}=\mathcal{O}_C(d(dp-3)Q)$ and $(   \mathcal{L}, \mathcal{N}=\mathcal{O}_C(dp-3), \alpha)$ is a Tango structure of index $d$. 
\end{example}

\begin{example}[Covers of supersingular elliptic curves] 
We give another example constructed geometrically which does not seem to appear in the literature. 
Let $E$ be a geometrically connected supersingular smooth genus 1 curve over a perfect field $K$. As $E$ is supersingular, there exists a class $\omega \in H^0(E, B^1_E) \neq 0$. Let $\mathcal{A}$ be an ample line bundle on $E$ of degree $a$, set $k = dp +1$ and let $s \in H^0(C, \mathcal{A}^{\otimes k})$ be a section such that $Z(s)= \sum [ q_i ] $ is reduced. Let $\pi \colon C \to E$ be the $k$-th cyclic covering totally ramified along $Z(s)$ and let $\widetilde{q}_i$ be the unique point over $q_i$. Then $\text{div}(\pi^*\omega) = dp\sum [\widetilde{q}_i]$, thus $\mathcal{L}=\mathcal{O}_C(d(\sum [\widetilde{q}_i)])$ embeds in $B^1_X$. In particular, $\mathcal{N}=\mathcal{O}_C(\sum [\widetilde{q}_i])$ gives a Tango structure of index $d$.
\end{example}

\begin{example}
Other examples of Tango structures can be constructed via hyperelliptic curves, see \cite[Section 3]{TY02}.
\end{example}

Given a model $\widetilde{C}$ of $C$ over $K^p$, we say a Tango structure $(\mathcal{L}, \mathcal{N}, \alpha)$ of index $d$ \emph{descends} to $\widetilde{C}$ if there exists a triple $(\widetilde{\mathcal{L}}, \widetilde{\mathcal{N}}, \widetilde{\alpha})$ on $\widetilde{C}$ such that its base change after Frobenius is isomorphic to the original Tango structure. 

\subsection{Raynaud surfaces}
We briefly recall the construction of Raynaud surfaces (for more in depth discussion, we refer to \cite{Ray78, Tak91, Tak92, TY02, Muk13}).
Let $C$ be a geometrically connected smooth curve over $K$ with a pre-Tango structure $(\mathcal{L},  \alpha)$.
As $H^1(C, \mathcal{L}^{-1}) \cong \Ext^1( \mathcal{O}_C, \mathcal{L}^{-1})\cong \Ext^1( \mathcal{L}, \mathcal{O}_C)$, to the class $\alpha$ one associates a vector bundle $\mathcal{E}$ which fits in a short exact sequence 
$$ 0 \rightarrow \mathcal{O}_C \rightarrow \mathcal{E} \rightarrow \mathcal{L}  \rightarrow 0,$$
and we consider the associated ruled surface $g \colon \mathbb{P}_C(\mathcal{E}) \rightarrow C$.
The inclusion $\mathcal{O}_C \subset \mathcal{E} $ gives a section $S \subset \mathbb{P}_C(\mathcal{E})$ of $g$ with ample normal bundle $N_{S/\mathbb{P}_C(\mathcal{E})} \simeq \mathcal{L}$.
On $\mathbb{P}_C(F^*\mathcal{E})$, the sequence $0 \to \mathcal{O}_C \to F^*\mathcal{E} \to \mathcal{L}^{\otimes p} \to 0$ splits. Using the splitting and tensoring by $\mathcal{L}^{\otimes -p}$ we have the short exact sequence 
$$ 0 \to \mathcal{O}_C \to F^*\mathcal{E} \otimes \mathcal{L}^{-\otimes p} \to \mathcal{L}^{-\otimes p} \to 0 $$
corresponding to a new section $\Gamma' \subset \mathbb{P}_C(F^*\mathcal{E})$ with anti-ample normal bundle $N_{\Gamma'/\mathbb{P}_C(F^*\mathcal{E})} \simeq \mathcal{L}^{- \otimes p}$. Let $F_{\mathcal{E}} \colon \mathbb{P}_C(\mathcal{E}) \to \mathbb{P}_C(F^*\mathcal{E})$  be the morphism given by the fibre product:

 \[ \begin{tikzcd}
   \mathbb{P}_C(\mathcal{E}) \arrow[dr, dotted, "F_\mathcal{E}"] \arrow[drr, bend left, "F_{\mathbb{P}_C(\mathcal{E})}"] \arrow[ddr, bend right, "g"]  & & & \\
   & \mathbb{P}_C(\mathcal{O}_C \oplus \mathcal{L}^{-\otimes p}) \cong {\mathbb{P}_{C}(F^*\mathcal{E}) }\arrow[d, "g'"] \arrow[r]  & {\mathbb{P}_C(\mathcal{E})} \arrow[d, "g"] \\
   & {C} \arrow[r, "F_C"] & {C}
\end{tikzcd} \] and consider the schematic pre-image $\Gamma \coloneqq F_\mathcal{E}^{-1}(\Gamma') \subset  \mathbb{P}_C(\mathcal{E})$ so that the map $\Gamma \rightarrow C$ is purely inseparable of degree $p$ and $\Gamma \cap S = \emptyset$.
We recall that $\Pic(\mathbb{P}_C(\mathcal{E})) \cong g^* \Pic(C) \oplus \mathbb{Z}[\mathcal{O}_{\mathbb{P}(\mathcal{E})}(1)].$
\begin{lemma}
In the previous situation, we have that the following formulas hold:
\begin{enumerate}
    \item $\mathcal{O}_{\mathbb{P}(\mathcal{E})}(S) = \mathcal{O}_{\mathbb{P}(\mathcal{E})}(1)$ ;
    \item $\mathcal{O}_{\mathbb{P}(\mathcal{E})}(\Gamma) = \mathcal{O}_{\mathbb{P}(\mathcal{E})}(p) \otimes g^*(\mathcal{L}^{-\otimes p})$;
    \item $\omega_{\mathbb{P}(\mathcal{E})} \cong \mathcal{O}_{\mathbb{P}(\mathcal{E})}(-2) \otimes g^*{\mathcal{L}^{\otimes (p+1)}}$
\end{enumerate}
\end{lemma}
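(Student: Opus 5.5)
We use the convention $\mathbb{P}_C(\mathcal{E}) = \Proj_C \Sym^\bullet \mathcal{E}^\vee$, so that $g_*\mathcal{O}_{\mathbb{P}(\mathcal{E})}(1) \cong \mathcal{E}^\vee$. Sections of $\mathcal{O}(1)$ are then linear forms on $\mathcal{E}$, and sub-line-bundles $\mathcal{M}\subset\mathcal{E}$ correspond to sections of $g$. Under this convention the section attached to $\mathcal{O}_C \subset \mathcal{E}$ is the zero locus of the composite
\[
\mathcal{O}(-1)\to g^*\mathcal{E} \to g^*\mathcal{L}.
\]
Depending on how this is normalised, $S$ is the zero scheme of a section of $\mathcal{O}(1)\otimes g^*\mathcal{M}$ for some line bundle $\mathcal{M}$ pulled back from $C$. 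The plan is to compute each class by writing $S$ and $\Gamma$ as zero loci of explicit sections, and to get (3) from the relative Euler sequence.

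For (1), write $\mathcal{O}(S) = \mathcal{O}(1)\otimes g^*\mathcal{M}$ and determine $\mathcal{M}$ by restricting to $S$. Along the section $S \cong C$ given by $\mathcal{O}_C\subset \mathcal{E}$, the tautological bundle restricts as $\mathcal{O}(1)|_S \cong \mathcal{O}_C$ (or its dual, according to convention). Since $\mathcal{O}(S)|_S \cong N_{S} \cong \mathcal{L}$, these two restrictions must be matched. If the statement $\mathcal{O}(S)=\mathcal{O}(1)$ is to hold on the nose, the paper's normalisation of $\mathcal{O}(1)$ must be the one making $\mathcal{O}(1)|_S\cong\mathcal{L}$. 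I will fix the convention precisely so that this holds, i.e. $\mathcal{O}(1)$ restricts to the quotient $\mathcal{L}$ on $S$ (equivalently, $S$ is the zero locus of the canonical section of $\mathcal{O}(1)$ coming from $\mathcal{E}\to\mathcal{L}$ read dually). Checking this convention is the main bookkeeping obstacle. With it fixed, (1) follows by comparing restrictions to $S$ and to a fibre, since $\Pic = g^*\Pic(C)\oplus\mathbb{Z}$ and $\mathcal{M}$ is detected by restriction to $S$.

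For (2), I would use $F_{\mathcal{E}}$. The pullback $F_{\mathcal{E}}^*\mathcal{O}_{\mathbb{P}(F^*\mathcal{E})}(1) \cong \mathcal{O}_{\mathbb{P}(\mathcal{E})}(p)$, since $F_{\mathcal{E}}$ composed with the projection is the absolute Frobenius, which raises line bundles to the $p$-th power. On $\mathbb{P}(F^*\mathcal{E})$, the section $\Gamma'$ is the section of the ruled surface corresponding to the twisted sequence for $F^*\mathcal{E}\otimes\mathcal{L}^{-p}$. Applying (1) to that bundle and untwisting gives
\[
\mathcal{O}(\Gamma') \cong \mathcal{O}_{\mathbb{P}(F^*\mathcal{E})}(1)\otimes g'^*\mathcal{L}^{-p}.
\]
As a check, the restriction to $\Gamma'$ is $\mathcal{L}^{p}\otimes \mathcal{L}^{-2p}$-type, which should produce normal bundle $\mathcal{L}^{-p}$. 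Pulling back along the flat map $F_{\mathcal{E}}$ (a relative Frobenius over $C$, flat since the surfaces are smooth) preserves the Cartier divisor, so $\mathcal{O}(\Gamma)=F_{\mathcal{E}}^*\mathcal{O}(\Gamma') = \mathcal{O}(p)\otimes g^*\mathcal{L}^{-p}$.

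For (3), use the relative Euler sequence
\[
0\to \Omega_{\mathbb{P}/C}\to g^*\mathcal{E}^{(\vee)}(-1)\to\mathcal{O}\to 0.
\]
This gives $\omega_{\mathbb{P}/C}\cong\mathcal{O}(-2)\otimes g^*\det\mathcal{E}^{\pm1}$, and we have $\det\mathcal{E}\cong\mathcal{L}$. For the Tango case, $\omega_C\cong\mathcal{L}^{p}$ (in general one keeps $\omega_C$, but here the Tango hypothesis is in force). Then $\omega_{\mathbb{P}}=\omega_{\mathbb{P}/C}\otimes g^*\omega_C=\mathcal{O}(-2)\otimes g^*\mathcal{L}^{p+1}$, with the sign of $\det$ fixed by the same convention as in (1). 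As a check, adjunction on $S$ gives $\omega_S=\mathcal{L}^{-2}\cdot\mathcal{L}^{p+1}\cdot\mathcal{L}=\mathcal{L}^{p}=\omega_C$, which is consistent. This adjunction check is what pins down all the sign conventions simultaneously.
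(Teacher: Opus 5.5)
Your proposal is correct. For (1) and (2) it is the paper's argument. In (1) you determine the twist of $\mathcal{O}(S)$ by restricting to a fibre and to $S$, using $\mathcal{O}(1)|_S\cong\mathcal{L}$ and $N_{S}\cong\mathcal{L}$. In (2) you get $\mathcal{O}(\Gamma')\cong\mathcal{O}_{\mathbb{P}(F^*\mathcal{E})}(1)\otimes (g')^*\mathcal{L}^{-\otimes p}$ from the same formula and pull back along $F_{\mathcal{E}}$, using $F_{\mathcal{E}}^*\mathcal{O}(1)\cong\mathcal{O}(p)$ and $g'\circ F_{\mathcal{E}}=g$.

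For (3) you take a different route. You use the relative Euler sequence, $\omega_{\mathbb{P}/C}\cong\mathcal{O}(-2)\otimes g^*\det\mathcal{E}$ with $\det\mathcal{E}\cong\mathcal{L}$. The paper uses only that $\omega_{\mathbb{P}(\mathcal{E})}$ has degree $-2$ on fibres, and then fixes the twist by adjunction along $S$.
\begin{itemize}
\item Your version gives the general formula $\omega_{\mathbb{P}(\mathcal{E})}\cong\mathcal{O}(-2)\otimes g^*(\mathcal{L}\otimes\omega_C)$ for any such extension. It also makes explicit that the Tango hypothesis $\omega_C\cong\mathcal{L}^{\otimes p}$ is needed. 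The paper uses this silently when it writes $\omega_S\cong\mathcal{L}^{\otimes p}$, even though the lemma is stated for a pre-Tango structure.
\item The paper's version has the advantage that restricting to $S$ automatically uses the same normalisation of $\mathcal{O}(1)$ as in (1). No sign on $\det\mathcal{E}$ has to be tracked, which is exactly why you end up needing an adjunction check anyway.
\end{itemize}

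Your remark on conventions is right. The lemma holds for Hartshorne's convention, where $\mathbb{P}(\mathcal{E})$ parametrises rank-one quotients and $g_*\mathcal{O}(1)\cong\mathcal{E}$; the paper's proof implicitly uses this through its citation of Hartshorne. With $\Proj\Sym^\bullet\mathcal{E}^\vee$, as in the paper's list of conventions, (1) would instead read $\mathcal{O}(S)\cong\mathcal{O}(1)\otimes g^*\mathcal{L}$. You should commit to Hartshorne's convention at the outset rather than carrying $\pm1$ and $(\vee)$ through the argument.

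Your sanity check in (2) is garbled. In this convention $\Gamma'$ corresponds to the quotient $F^*\mathcal{E}\to\mathcal{O}_C$ with kernel $\mathcal{L}^{\otimes p}$. So $\mathcal{O}(1)|_{\Gamma'}\cong\mathcal{O}_C$, and the restriction of $\mathcal{O}(1)\otimes (g')^*\mathcal{L}^{-\otimes p}$ is directly $\mathcal{L}^{-\otimes p}$. There is no $\mathcal{L}^{\otimes p}\otimes\mathcal{L}^{-\otimes 2p}$ term. This does not affect the derivation itself.
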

\begin{proof}
\begin{enumerate}
    \item This is the standard formula of the normal bundle of a section of $\mathbb{P}(\mathcal{E})$. Indeed, by \cite[Proposition III.7.12]{Har77} we have $\mathcal{O}_{\mathbb{P}(\mathcal{E})}(1)|_S  \cong \mathcal{L}$ and, as $\mathcal{O}_{\mathbb{P}(\mathcal{E})}(S)|_S \cong N_{S/\mathbb{P}_C(\mathcal{E})} \cong \mathcal{L}$.
    \item Still using this formula, we compute $\mathcal{O}_{\mathbb{P}(F^*(\mathcal{E}))}(\Gamma')= \mathcal{O}_{\mathbb{P}(F^*(\mathcal{E}))}(1) \otimes (g')^*(\mathcal{L}^{-p})$. We now pull back this via the relative Frobenius $F_{\mathcal{E}}$ and use the commutative diagram above to get $\mathcal{O}_{\mathbb{P}(\mathcal{E})}(\Gamma) =  F_{\mathcal{E}}^*\mathcal{O}_{\mathbb{P}(F^*(\mathcal{E}))}(\Gamma') =  \mathcal{O}_{\mathbb{P}(\mathcal{E})}(p) \otimes F_{\mathcal{E}}^*(g')^*(\mathcal{L}^{-p}) =  \mathcal{O}_{\mathbb{P}(\mathcal{E})}(p) \otimes g^*(\mathcal{L}^{-p})$.
\item Finally, since $\omega_{\mathbb{P}(\mathcal{E})} \cdot F =-2$ for any closed fibre, we have $\omega_{\mathbb{P}(\mathcal{E})} \cong \mathcal{O}_{\mathbb{P}(\mathcal{E})}(-2) \otimes g^*\mathcal{M}$ for some line bundle $\mathcal{M}$.
    As $\mathcal{L}^{\otimes p} \cong \omega_S \cong \omega_S(S)|_S \cong \mathcal{O}(-1)|_S \otimes \mathcal{M} \cong \mathcal{L}^{-1} \otimes \mathcal{M}$, which proves the claim. 
\end{enumerate}
\end{proof}
The Tango condition is reflected by the singularities of the curve $\Gamma$:
\begin{proposition} \label{prop: regularityTango}
    $\Gamma$ is regular if and only if $(\mathcal{L}, \alpha)$ is a Tango structure.
\end{proposition}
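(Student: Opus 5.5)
Regularity of $\Gamma$ is local, so I will reduce to a local computation on an affine open $U=\Spec(R)\subset C$ where $\mathcal{L}$ is trivialised by a generator $e$ and $\mathcal{E}|_U\cong \mathcal{O}_U\oplus\mathcal{L}|_U$. On such $U$, the class $\alpha$ is given by a Čech cocycle $\{h_{ij}\}$. The condition $F_C^*\alpha=0$ means $h_{ij}^p = f_i-f_j$ (after trivialising $\mathcal{L}^{\otimes p}$ by $e^p$) for local functions $f_i$. By Lemma \ref{lem: vanishing_criterion} and the lemma characterising pre-Tango structures, the locally exact form $\varphi_\alpha(e)$ is then the class of $df_i$ in $B^1_C$: the forms $df_i$ glue, because $d(f_i-f_j)=d(h_{ij}^p)=0$, to the section $\omega_\alpha$. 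Consequently $\psi_\alpha\colon\mathcal{L}^{\otimes p}\to\Omega^1_{C/K}$ sends $e^p$ to the image of $df_i$ in $\Omega^1_{C/K}$.

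Next I will write $\Gamma$ explicitly. In the fibre coordinate $t$ of $\mathbb{P}_C(\mathcal{E})$ over $U_i$ (the complement of $S$, so $t$ is an affine coordinate on the $\mathbb{A}^1$-bundle, with transition $t\mapsto t+h_{ij}$), the splitting of $F^*\mathcal{E}$ is given by the $f_i$. The section $\Gamma'$ is then $\{t'=f_i\}$ in the coordinate $t'$ of $\mathbb{P}(F^*\mathcal{E})$, and $F_\mathcal{E}^*t'=t^p$. Hence
\[ \Gamma\cap g^{-1}(U_i)=\{t^p-f_i=0\}\subset \mathbb{A}^1_{U_i}. \]
This is consistent with the transition rule, since $(t+h_{ij})^p-f_i=t^p-f_j$. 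It also agrees with the formula $\mathcal{O}(\Gamma)=\mathcal{O}(p)\otimes g^*\mathcal{L}^{-p}$ computed in the preceding lemma, which I will use as a sanity check on the sign and twist conventions.

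The regularity criterion then comes from the Jacobian criterion, used in its absolute form since $K$ may be imperfect. A hypersurface $\{G=0\}$ in the regular scheme $\mathbb{A}^1_{U_i}$ is regular at a point $z$ exactly when $G\notin\mathfrak{m}_z^2$. Here $G=t^p-f_i$. Write $\bar f_i$ for the restriction of $f_i$ to the fibre. At a point $z$ over $c\in C$ we can write $t^p-f_i = (t^p-\bar f_i(c)^{\dots})$, that is, a $p$-th power term plus $f_i$ minus its local $p$-th power part. The cleanest formulation is via relative differentials: $G\in\mathfrak m_z^2$ iff $dG=-df_i$ vanishes in $\Omega^1_{\mathbb{A}^1_U/K}\otimes k(z)$ together with a condition that needs care at imperfect residue fields. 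To avoid that subtlety I will first reduce to $K$ algebraically closed. This reduction is legitimate because being Tango is the statement that $\psi_\alpha$ is an isomorphism, which is stable under field extension. For regularity, $\Gamma\to C$ is a homeomorphism onto $C$ and I will argue on the completed local rings $\widehat{\mathcal{O}}_{C,c}\cong\kappa[[u]]$; alternatively one checks smoothness over $K$, noting that over algebraically closed $K$ regularity is the same as smoothness. With $\widehat{\mathcal{O}}_{C,c}\cong\kappa[[u]]$, the completed local ring of $\Gamma$ at $z$ is $\kappa[[u]][t]/(t^p-f_i)$ localised at the maximal ideal. Writing $f_i=a^p+b$ with $b$ having no $p$-th power part up to order $>1$, the ring is regular iff $\mathrm{ord}_u(b)=1$, i.e.\ iff $df_i$ does not vanish at $c$. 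So $\Gamma$ is regular everywhere iff $\omega_\alpha$, equivalently $\psi_\alpha(e^p)$, generates $\Omega^1_{C/K}$ at every point. That is exactly the condition that the injective map $\psi_\alpha\colon\mathcal{L}^{\otimes p}\to\Omega^1_{C/K}$ (injective by Proposition \ref{prop: p-closed-foliations}) is an isomorphism.

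The main obstacle is the bookkeeping in identifying $\Gamma'$ with $\{t'=f_i\}$. That is, I must match the splitting of $F^*\mathcal{E}$ with the functions $f_i$ trivialising $F^*\alpha$, and track the twist by $\mathcal{L}^{-p}$, so that the local equation of $\Gamma$ is genuinely $t^p-f_i$ and not, for example, $t^p-\lambda f_i$ with a unit $\lambda$. The second delicate point is handling imperfect residue fields, which I will sidestep through the base change to $\bar K$ described above.
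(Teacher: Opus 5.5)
Your local model is correct: $\Gamma$ is cut out over $U_i$ by $t^p-f_i$ (up to sign), where $\omega_\alpha$ is locally $df_i$. Over an algebraically closed field, your completed-local-ring computation correctly gives ``$\Gamma$ regular $\Leftrightarrow$ $d_{C/K}f_i$ nowhere zero $\Leftrightarrow$ Tango''. The paper itself only cites \cite[Proposition 1.7]{Muk13} for this.

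The gap is the reduction from $K$ to $\overline K$. The Tango condition is invariant under base change, and regularity descends along the faithfully flat map $\Gamma_{\overline K}\to\Gamma$, so you do get Tango $\Rightarrow$ $\Gamma$ regular. But regularity does not ascend along inseparable extensions, so ``$\Gamma$ regular $\Rightarrow$ Tango'' is never reached. What you have actually proved is ``Tango $\Leftrightarrow$ $\Gamma$ smooth over $K$''.

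Your remark that $\Gamma\to C$ is a homeomorphism and that $\widehat{\mathcal O}_{C,c}\cong\kappa[[u]]$ does not repair this. Over $K$ itself, with $\kappa=\kappa(c)$ possibly imperfect or inseparable over $K$, your criterion is wrong. The point of $\Gamma$ over $c$ is regular if and only if the \emph{absolute} form $df_i$ is nonzero in $\Omega^1_C\otimes\kappa(c)$. In particular this holds automatically when $f_i(c)\notin\kappa(c)^p$. This is weaker than nonvanishing in $\Omega^1_{C/K}\otimes\kappa(c)$. For example, take $K=\mathbb{F}_p(s)$ with $p$ odd. The local ring of $K[x,t]/(t^p-s-x^2)$ at its unique point over $x=0$ is a DVR with maximal ideal $(x,t^p-s)=(x)$, yet $d_{C/K}(s+x^2)=2x\,dx$ vanishes there.

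So the converse needs an extra input that kills the $\Omega^1_K\otimes_K\mathcal O_C$-component of $\mathcal L^{\otimes p}\to\Omega^1_C$. In the situation where the paper applies the result, $C$ descends to $K^p$. Then Proposition \ref{prop: splitting_Kahler_sequence} splits the K\"ahler sequence, and the resulting component $\mathcal L^{\otimes p}\to\Omega^1_K\otimes_K\mathcal O_C$ lies in $H^0(C,\mathcal L^{-\otimes p})\otimes_K\Omega^1_K=0$ by ampleness, as in Proposition \ref{prop: p-closed-foliations}. Hence absolute and relative vanishing of $df_i$ agree at every point, and the absolute criterion above gives ``$\Gamma$ regular $\Rightarrow$ Tango''. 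Without such an input, you should state the equivalence with ``smooth over $K$'' in place of ``regular'', or assume $K$ perfect.
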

\begin{proof}
    See \cite[Proposition 1.7]{Muk13}.
\end{proof}
Assume $(\mathcal{L}, \mathcal{N}, \alpha)$ is a Tango structure of index $d$ (i.e. $\mathcal{N}^{\otimes d} \cong \mathcal{L}$) and assume $d$ divides $p+1$.
As we have 
$$\mathcal{O}_{\mathbb{P}(\mathcal{E})}(S + \Gamma) = \mathcal{O}_{\mathbb{P}(\mathcal{E})}(p+1) \otimes g^*(\mathcal{N}^{-\otimes dp}) = \bigg( \mathcal{O}_{\mathbb{P}(\mathcal{E})} \bigg(\frac{p+1}{d} \bigg) \otimes g^*(\mathcal{N}^{-\otimes p}) \bigg)^{\otimes d} \eqqcolon \mathcal{M}^{\otimes d},$$
we can construct the $d$-th cyclic covering $\pi$ which ramifies over the regular divisor $S + \Gamma$ (regularity follows by Proposition \ref{prop: regularityTango}) and sits in the following commutative diagram:
 \[ \begin{tikzcd}
    {X \coloneqq \mathbb{P}_C(\mathcal{E})[\sqrt[d]{S+\Gamma}]} \arrow[dr, "f"] \arrow[r, "\pi"]  & {\mathbb{P}_C(\mathcal{E})} \arrow[d, "g"] \\
     & C,
\end{tikzcd} \]
where $X \subset \Spec_{\mathbb{P}_C(\mathcal{E})} \Sym^{\bullet}\mathcal{M^{\vee}}$ defined by $(z^d-\pi^*(s\gamma)=0)$, where $z$ is the tautological section and $s$ (resp. $\gamma$) are the section corresponding to $S$ (resp. $\Gamma$).
The surface $X$ is the \emph{Raynaud surface} associated to the Tango structure $(\mathcal{L},\mathcal{N}, \alpha)$ of index $d$  on $C$.
We denote by $\Sigma \coloneqq \pi^{-1}(\Gamma)$ the multi-section of the cusps of $f$ and $T \coloneqq \pi^{-1}(S)$ a section of $f$.
\begin{lemma}
    The following hold:
    \begin{enumerate}
        \item for the normal bundles of $\Sigma$ and $T$, we have 
        $N_{\Sigma/X} \cong \mathcal{N}^{-\otimes p}$ and $N_{T/X} \cong \mathcal{N}$;
        \item $\omega_X \cong (\mathcal{O}_X(pd-p-d-1)T) \otimes f^*\mathcal{N}^{\otimes (d+p)}; $ 
        \item if $F$ is a closed fibre of $f$, $2p_a(F)-2=\deg K_F=dp-p-d-1.$
    \end{enumerate}
\end{lemma}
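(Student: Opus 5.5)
The plan is to derive all three items from the standard formulas for the $d$-th cyclic cover $\pi\colon X\to P\coloneqq\mathbb{P}_C(\mathcal{E})$ branched along the regular divisor $S+\Gamma$, using $\mathcal{M}^{\otimes d}\cong\mathcal{O}_P(S+\Gamma)$. Concretely, I will use these facts. The divisor of the tautological section $z$ is $T+\Sigma$, so $\mathcal{O}_X(T+\Sigma)\cong\pi^*\mathcal{M}$. Moreover $\pi^*S=dT$ and $\pi^*\Gamma=d\Sigma$, and $\pi$ is totally ramified over $S$ and $\Gamma$, so $\pi|_T\colon T\to S$ and $\pi|_\Sigma\colon\Sigma\to\Gamma$ are isomorphisms. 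Finally, the ramification formula gives $\omega_X\cong\pi^*(\omega_P\otimes\mathcal{M}^{\otimes(d-1)})$. Throughout, line bundles pulled back from $C$ are identified via $f=g\circ\pi$, so that $\mathcal{N}$ on $T$ or $\Sigma$ means $f^*\mathcal{N}$ restricted.

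For (1), since $T\cap\Sigma=\emptyset$ (because $S\cap\Gamma=\emptyset$), I get $N_{T/X}\cong\pi^*\mathcal{M}|_T$. By the previous lemma $\mathcal{O}_P(1)|_S\cong\mathcal{L}=\mathcal{N}^{\otimes d}$. Hence
\[
\mathcal{M}|_S\cong\mathcal{N}^{\otimes d\cdot\frac{p+1}{d}}\otimes\mathcal{N}^{-\otimes p}\cong\mathcal{N}.
\]
Similarly, $N_{\Sigma/X}\cong\pi^*\mathcal{M}|_\Sigma$. Here the key input is that $\mathcal{O}_P(1)|_\Gamma$ is trivial. This holds because $\Gamma'$ corresponds to the quotient $F^*\mathcal{E}\cong\mathcal{O}_C\oplus\mathcal{L}^{\otimes p}\to\mathcal{O}_C$, so $\mathcal{O}_{\mathbb{P}(F^*\mathcal{E})}(1)|_{\Gamma'}\cong\mathcal{O}$. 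Pulling back along $F_{\mathcal{E}}$, this gives triviality of $\mathcal{O}_P(p)|_\Gamma$. To pass from $\mathcal{O}_P(p)$ to $\mathcal{O}_P(1)$, I would argue directly that the tautological quotient of $\mathcal{E}$ restricted to $\Gamma$ is trivial: the defining map $F^*\mathcal{E}\to\mathcal{O}$ factors through the pullback of $\mathcal{E}$ to $\Gamma$. Once this is settled, $\mathcal{M}|_\Gamma\cong\mathcal{N}^{-\otimes p}$, which gives $N_{\Sigma/X}\cong\mathcal{N}^{-\otimes p}$. I expect this identification of $\mathcal{O}_P(1)|_\Gamma$, with $\Gamma\to C$ purely inseparable, to be the only delicate point. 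As a consistency check, the degrees agree with $N_{\Gamma/P}\cong\mathcal{O}(\Gamma)|_\Gamma$ pulled back from $\mathcal{L}^{-\otimes p}$, since $d^2\Sigma^2=\Gamma^2$ up to the degree-$d$ factor.

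For (2), I would substitute $\omega_P\cong\mathcal{O}_P(-2)\otimes g^*\mathcal{N}^{\otimes d(p+1)}$ from the previous lemma, together with $\mathcal{M}^{\otimes(d-1)}=\mathcal{O}_P\big(\tfrac{(d-1)(p+1)}{d}\big)\otimes g^*\mathcal{N}^{-\otimes p(d-1)}$. Using $\pi^*\mathcal{O}_P(1)=\pi^*\mathcal{O}_P(S)=\mathcal{O}_X(dT)$, the coefficient of $T$ is
\[
-2d+(d-1)(p+1)=dp-p-d-1,
\]
and the exponent of $\mathcal{N}$ is
\[
d(p+1)-p(d-1)=d+p.
\]
For (3), I would apply adjunction to a closed fibre $F$ of $f$. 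Since $F^2=0$, we have $\deg K_F=\omega_X\cdot F$. Since $f^*\mathcal{N}\cdot F=0$ and $T\cdot F=1$ because $T$ is a section, this gives $2p_a(F)-2=dp-p-d-1$.
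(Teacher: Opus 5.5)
Your proposal follows the paper's argument: normal bundles from the cyclic-cover identity $\mathcal{O}_X(T+\Sigma)\cong\pi^*\mathcal{M}$ together with $T\cap\Sigma=\emptyset$, the ramification formula $\omega_X\cong\pi^*\omega_P\otimes\mathcal{O}_X((d-1)(T+\Sigma))$, and adjunction on a fibre. Expanding $\mathcal{M}^{\otimes(d-1)}$ directly in (2) is the same computation as the paper's substitution $\Sigma\sim pT-f^*\mathcal{N}^{\otimes p}$. Your numerics in (1)--(3) are correct.

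The one step your write-up does not actually establish is the one you flagged, $\mathcal{O}_P(1)|_\Gamma\cong\mathcal{O}_\Gamma$. The description of $\Gamma$ via $F_{\mathcal{E}}$ only tells you that $F_\Gamma^*$ of the tautological quotient $g^*\mathcal{E}|_\Gamma\to\mathcal{O}_P(1)|_\Gamma$ is the trivial quotient. That gives $\mathcal{O}_P(p)|_\Gamma\cong\mathcal{O}_\Gamma$ and nothing more. Your assertion that the defining map \emph{factors through the pullback of $\mathcal{E}$} is exactly what has to be proved, and a line bundle on $\Gamma$ whose $p$-th power is trivial need not be trivial.

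No Frobenius argument is needed, though. You have $\mathcal{O}_P(1)\cong\mathcal{O}_P(S)$ and $S\cap\Gamma=\emptyset$, so the section $s$ cutting out $S$ restricts to a nowhere-vanishing section of $\mathcal{O}_P(1)|_\Gamma$. Equivalently, as the paper does implicitly: $\mathcal{O}_X(T+\Sigma)\cong\pi^*\mathcal{M}$ and $\pi^*\mathcal{O}_P(1)\cong\mathcal{O}_X(dT)$ give $\Sigma\sim pT-f^*\mathcal{N}^{\otimes p}$. Restricting this to $\Sigma$, which misses $T$, yields $N_{\Sigma/X}\cong f^*\mathcal{N}^{-\otimes p}|_\Sigma$ at once. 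With this replacement your proof is complete.
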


\begin{proof}
    The first item follows from cyclic covering construction.
    As we know that $\Sigma \sim pT - f^*\mathcal{N}^{\otimes p}$, we have for the canonical divisor:
    $$\omega_X \cong \pi^*\omega_{\mathbb{P}_C(\mathcal{E})}\otimes \mathcal{O}_X((d-1)(\Sigma+T)) \cong f^*{\mathcal{L}^{\otimes {p+1}} \otimes \mathcal{O}_X((d-1)\Sigma-(d+1)T}) $$
    $$
    \cong f^*\mathcal{L}^{p+1} \otimes f^*\mathcal{N}^{\otimes p-dp}\otimes \mathcal{O}_X((d-1)pT-(d+1)T) \cong \mathcal{O}_X((pd-p-d-1)T)\otimes f^*(\mathcal{L} \otimes \mathcal{N}^{\otimes p}).$$
    
    (3) follows from (2) and the adjunction formula.
\end{proof}

\begin{lemma} \label{lem: descent_raynaud}
    Let $\widetilde{C}$ be a model of $C$ over $K^p$ and suppose that the Tango structure of index $d$ $(\mathcal{L}, \mathcal{N}, \alpha)$ on $C$ descends to $\widetilde{C}$.
    Then the Raynaud surface $X$ descends to a Raynaud surface $\widetilde{X}$ over $K^p$.
\end{lemma}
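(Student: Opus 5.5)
The plan is to run the whole Raynaud construction on $\widetilde{C}$ with the descended data $(\widetilde{\mathcal{L}}, \widetilde{\mathcal{N}}, \widetilde{\alpha})$. We then check that each step commutes with the base change $-\times_{F_K}\Spec(K)$. Since $F_K \colon K^p \to K$ (equivalently $\Spec K \to \Spec K$ via Frobenius) is a flat field extension, base change is exact and commutes with cohomology. So every object defined by cohomological or sheaf-theoretic operations on $\widetilde{C}$ pulls back to the corresponding object on $C$.

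\textbf{Step 1: the data on $\widetilde{C}$.} First, $(\widetilde{\mathcal{L}}, \widetilde{\alpha})$ is a pre-Tango structure. Indeed $H^1(\widetilde{C}, \widetilde{\mathcal{L}}^{-1}) \otimes_{F_K} K \cong H^1(C, \mathcal{L}^{-1})$ by flat base change, so $\widetilde{\alpha} \neq 0$. Moreover, $F_{\widetilde{C}}^*\widetilde{\alpha}$ maps to $F_C^*\alpha = 0$, because absolute Frobenius commutes with base change; hence it vanishes by injectivity.

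Next, $(\widetilde{\mathcal{L}}, \widetilde{\alpha})$ is Tango. The map $\psi_{\widetilde{\alpha}} \colon \widetilde{\mathcal{L}}^{\otimes p} \to \Omega^1_{\widetilde{C}/K}$ base-changes to $\psi_\alpha$, since $\Omega^1_{\widetilde{C}/K} \otimes_{F_K} K \cong \Omega^1_{C/K}$. An isomorphism after faithfully flat base change is an isomorphism.

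Finally, $\widetilde{\mathcal{N}}^{\otimes d} \cong \widetilde{\mathcal{L}}$ holds by the definition of descent of the triple. Also $d \mid p+1$ is unchanged.

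\textbf{Step 2: the construction commutes with base change.} Let $\widetilde{\mathcal{E}}$ be the extension attached to $\widetilde{\alpha}$. Its base change is the extension attached to $\alpha$, since $\Ext^1$ commutes with flat base change. Hence $\mathbb{P}_{\widetilde{C}}(\widetilde{\mathcal{E}}) \times_{F_K} K \cong \mathbb{P}_C(\mathcal{E})$, and the section $\widetilde{S}$ goes to $S$.

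For $\Gamma$, the splitting of $F^*\widetilde{\mathcal{E}}$ base-changes to a splitting of $F^*\mathcal{E}$. The splitting is unique here: splittings form a torsor under $\Hom(\mathcal{L}^{\otimes p}, \mathcal{O}_C) = H^0(\mathcal{L}^{-p}) = 0$ by ampleness. Hence $\widetilde{\Gamma}'$ goes to $\Gamma'$. Since $F_{\widetilde{\mathcal{E}}}$ base-changes to $F_{\mathcal{E}}$ (relative Frobenius over $C$ is compatible), the schematic preimage $\widetilde{\Gamma}$ goes to $\Gamma$. By Proposition \ref{prop: regularityTango} and Step 1, $\widetilde{\Gamma}$ is regular.

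We then choose sections $\widetilde{s}, \widetilde{\gamma}$ and an isomorphism $\mathcal{O}(\widetilde{S}+\widetilde{\Gamma}) \cong \widetilde{\mathcal{M}}^{\otimes d}$, where $\widetilde{\mathcal{M}}$ is built from $\widetilde{\mathcal{N}}$. These base-change to the corresponding data on $C$. The cyclic cover $\{z^d = \pi^*(\widetilde{s}\widetilde{\gamma})\}$ inside $\Spec \Sym^\bullet \widetilde{\mathcal{M}}^\vee$ is defined by a relative $\Spec$ of an explicit algebra, which commutes with base change. This gives $\widetilde{X} \times_{F_K} K \cong X$, and $\widetilde{X}$ is by definition the Raynaud surface of the descended Tango structure.

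\textbf{Main obstacle.} The delicate point is the identification of $\Gamma$. One must check that the Frobenius pullback and relative Frobenius $F_{\mathcal{E}}$ over $C$ are compatible with the base change along $F_K$, which is itself a Frobenius. The two Frobenii must not be conflated. I would handle this by working with $F_{C/K}$-type relative constructions over $\widetilde{C}$ and invoking the uniqueness of the splitting noted above. A secondary issue is whether the choice of isomorphism defining the cyclic cover matters. This is harmless, since any two choices differ by a unit, and we simply take the base change of the chosen one.
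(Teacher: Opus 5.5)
Your proposal is correct and follows the paper's argument: the paper's proof only says that the construction run on $\widetilde{C}$ with $(\widetilde{\mathcal{L}},\widetilde{\mathcal{N}},\widetilde{\alpha})$ base-changes along $F_K$ to the construction on $C$, and you check each step explicitly (extension class, the unique splitting of $F^*\mathcal{E}$, $F_{\mathcal{E}}$, $\Gamma$, and the cyclic cover). Your closing remark needs one more ingredient. Rescaling $s\gamma$ by $\lambda\in K^*$ can give a non-isomorphic twist of the cover when $\lambda\notin K^{*d}$, and a scalar $\widetilde{\lambda}$ on $\widetilde{C}$ becomes $\widetilde{\lambda}^p$ after base change; the choice is harmless only because $\gcd(p,d)=1$ (as $d\mid p+1$), which makes $c\mapsto c^p$ surjective on $K^*/K^{*d}$.
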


\begin{proof}
    This is immediate by taking base change of the previous construction on $\widetilde{C}$ with the descended Tango structure $(\widetilde{\mathcal{L}}, \widetilde{\mathcal{N}}, \widetilde{\alpha})$.
\end{proof}

\subsection{Failure of Kodaira vanishing for Raynaud surfaces}

In this section we construct the ample line bundle $\mathcal{A} \in \Pic(X)$ which provides the counterexample to Kodaira vanishing for Raynaud surfaces. 
In fact, we show that such line bundle can be understood geometrically as follows: since $X$ is a smooth surface, the fact that the fibration $X \rightarrow C$ is not smooth along a divisor implies that $f^* \Omega^1_{C/K} \subset \Omega^1_{X/K}$ is not a saturated subsheaf. 
The special feature of characteristic $p>0$ is that such a divisor can be horizontal. 

Let us describe better the fibration $f \colon X \to C$ over an affine open set $U \subset C$ for which $\mathcal{E}|_U$ is trivial. 
Let $\omega \in H^0(C, B^1_C \otimes \mathcal{N}^{-\otimes d}) \neq 0$ be the locally exact form given by the Tango structure. 
By construction, $\omega=d \xi$ for a regular function $\xi \in \mathcal{O}_C(U)$. 
In this case, there is a trivialisation $\mathbb{P}_C(\mathcal{E})|_U \cong U \times \mathbb{P}^1_{[y:s]}$ for which the equation of $S$ is given by $(s=0)$ and the equation of $\Gamma$ is given by $(y^p-(\pi^*\xi)s^{p}=0)$.
In particular, the Raynaud surface is locally cut over $U$ by the equation 
$$ (z^d-s(y^p-(\pi^*\xi)s^p)=0)\subset U \times {\mathbb{P}^1}_{[y:s]}. $$
With this description, we can study the map $f^*\Omega^1_{C/K} \to \Omega^1_{X/K}$.
\begin{proposition} \label{prop: saturation}
If $d \geq 2$ the subsheaf $f^* \Omega^1_{C/K} \subset \Omega^1_{X/K}$ is not saturated, and its saturation is $f^* \Omega^1_{C/K}((d-1)\Sigma)$.
Moreover, we have a short exact sequence 
$$ 0 \to  f^*\Omega^1_{C/K}((d-1)\Sigma) \to \Omega^1_{X/K} \to (\omega_X \otimes f^*\omega_C^{-1}) \otimes \mathcal{O}_X(-(d-1)\Sigma) \to 0.$$ 
\end{proposition}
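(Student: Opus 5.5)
The plan is to compute everything in the local coordinates just described, and then globalize using the formula for $\omega_X$. All statements are local on $C$, so I work over an affine open $U\subset C$ on which $\mathcal{E}|_U$ is trivial and $\omega = d\xi$. I also choose $U$ small enough that $\Omega^1_{U/K}$ is free, generated by $dt$ for a local coordinate $t$. Then $f^*\Omega^1_{C/K}$ is generated by $dt$. On the chart $s=1$ the surface is
\[
z^d = y^p - \xi .
\]
Differentiating gives the relation
\[
d z^{d-1}\,dz = -\xi'(t)\,dt ,
\]
since $d(y^p)=0$. The Tango condition says that $\omega=d\xi$ has divisor $pD$ with $\mathcal{L}=\mathcal{O}(D)$. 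Away from $D$ the derivative $\xi'$ is a unit; near points of $D$ I handle it separately, and there $\Sigma$ is still cut out by $z=0$, because $\Sigma=\pi^{-1}(\Gamma)$ is set-theoretically $\{z=0\}$ on this chart.

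On the locus where $\xi'$ is a unit, the relation gives $dt = -(d/\xi') z^{d-1}dz$. Here $z$ and $y$ are local parameters of $X$, and the cusp multisection $\Sigma$ is $(z=0)$, reduced since $\pi$ is totally ramified of index $d$ along $\Gamma$. Hence $\Omega^1_{X/K}$ is free on $dz,dy$, and $dt$ lies in it as $z^{d-1}$ times a unit multiple of $dz$. This shows that the saturation of $f^*\Omega^1_{C/K}$ is $f^*\Omega^1_{C/K}((d-1)\Sigma)$ locally, generated by $dz$, with torsion-free (in fact free) quotient generated by $dy$. At a point of $D$ I argue similarly. I expect this to be the main obstacle, because there $\xi'$ vanishes to order divisible by $p$. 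Since $\Gamma$ is regular (Proposition~\ref{prop: regularityTango}) and $\Gamma\cap S=\emptyset$, I will re-choose coordinates so that $t$ can be replaced by a parameter transverse to $\Gamma$. The alternative is to check saturation away from a codimension-$2$ set and use reflexivity: a saturated subsheaf of a locally free sheaf on a smooth surface is reflexive, hence determined in codimension one. Only the generic point of $\Sigma$ and the generic point of fibres matter, so the points of $D$ (finitely many fibres) can only contribute vertical divisors. At a general point of such a fibre, $X\to C$ is smooth by construction, and there is no extra saturation. On the chart near $S$ ($y=1$) the equation $z^d=s(1-\xi s^p)$ shows $f$ is smooth there, so nothing happens.

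Once saturation is established, the subsheaf $f^*\Omega^1_{C/K}((d-1)\Sigma)$ is a saturated line subbundle of the rank-$2$ bundle $\Omega^1_{X/K}$. The quotient $Q$ is torsion-free of rank $1$, and the local computation shows it is locally free: it is generated by $dy$ on the generic locus, and it is invertible in codimension $1$ by the reflexivity argument. Taking determinants then gives
\[
Q \cong \omega_X\otimes f^*\omega_C^{-1}\otimes\mathcal{O}_X(-(d-1)\Sigma),
\]
which yields the short exact sequence. The hypothesis $d\ge 2$ enters only to make $(d-1)\Sigma$ nonzero, which is what makes the original subsheaf non-saturated.
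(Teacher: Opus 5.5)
Your chart computation on $s=1$ is the paper's argument: the paper uses the \'etale model $z^d=y^p+x$, which is your chart with $-\xi$ as coordinate on $C$. Where $\xi'$ is a unit, it correctly shows that $\Omega^1_{X/K}$ is free on $dy,dz$, that $f^*dt$ is a unit times $z^{d-1}dz$, and that the quotient is generated by $dy$.

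The problem is your treatment of the points over $D$, which rests on a false premise. On $U$ with $\mathcal{E}|_U$ trivial, $\xi$ is computed in a trivialization of $\mathcal{L}|_U$. The Tango condition ($\psi_\alpha\colon\mathcal{L}^{\otimes p}\to\Omega^1_{C/K}$ is an isomorphism) says precisely that $d\xi$ generates $\Omega^1_{U/K}$, i.e.\ $\xi'$ is a unit on all of $U$. The divisor $pD$ belongs to the global form $\omega_\alpha$ measured against the section $1\in\mathcal{O}_C(D)$, which is not a local generator near $D$. Consistently with this, if $\xi'(t_0)=0$, then all partial derivatives of $z^d-y^p+\xi(t)$ (and of $y^p-\xi(t)$) vanish at the points with $z=0$, $y^p=\xi(t_0)$. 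So $X$ and $\Gamma$ would be singular there, contradicting the very regularity of $\Gamma$ you invoke. There is no exceptional locus and nothing to ``re-choose''.

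The genuine logical gap is in your fallback. Reflexivity does determine the saturation: a saturated line subsheaf of a vector bundle on a smooth surface is invertible, hence fixed by its behaviour in codimension one, and your analysis along $\Sigma$, $T$ and general fibre points handles that. But reflexivity cannot give local freeness of the quotient $Q$. Every torsion-free rank-one sheaf on a smooth surface is invertible in codimension one, yet it may still be an ideal sheaf $\mathcal{I}_Z$ of a zero-dimensional $Z$ twisted by a line bundle. This is exactly what happens at isolated singular points of a saturated line subsheaf: for example $\mathcal{O}\xrightarrow{(x,y)}\mathcal{O}^{2}$ on $\mathbb{A}^2$ has quotient the ideal of the origin. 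So, as written, you have not established the short exact sequence with an invertible right-hand term at the points of $\Sigma$ lying over $D$.

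The fix is to replace the detour with the observation that $\xi'$ is a unit on $U$. Then your explicit chart covers every point of $\Sigma$, the quotient is free there, and your determinant computation $Q\cong\omega_X\otimes f^*\omega_C^{-1}\otimes\mathcal{O}_X(-(d-1)\Sigma)$ finishes the proof.
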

\begin{proof}
To show that the natural homomorphism $f^*\Omega^1_{C/K} \to \Omega^1_{X/K}$ extends to a saturated inclusion $f^*\Omega^1_{C/K}((d-1)\Sigma) \to \Omega^1_{X/K}$, it is enough to perform a local computation about the locus of cusps of $f$. 
Locally \'etale, the fibration $g \colon \mathbb{P}(\mathcal{E}) \to C$ is the projection $\pi_x \colon \mathbb{A}^2_{x,y} \to \mathbb{A}^1_x$. As $\Gamma$ is regular and $\pi_x|_\Gamma$ is purely inseparable of degree $p$, up to possibly passing to a further \'etale cover we can assume $\Gamma$ is described as the vanishing locus of $(y^p+x)$.
Thus locally around the cusps of $X$ we can write $X \cong \Spec k[x,y,z]/(z^d=y^p+x)$ and $f$ is simply the projection onto the $x$ coordinate.
A local computation shows that $f^*(dx)=dz^{d-1}dz$, hence the saturation of $f^*\Omega^1_{C/K} \to \Omega^1_{X/K}$ is given exactly by $f^*\Omega^1_{C/K}((d-1)\Sigma) \to \Omega^1_{X/K}$.

To prove the validity of the short exact sequence, we only need to show that the quotient of the injection is locally free. This follows from the local computations around the cusp done above.
\end{proof}

Using the previous computation, we deduce the following isomorphism
$$f^*\Omega^1_{C/K}((d-1)\Sigma) \cong f^*\mathcal{N}^{\otimes pd} \otimes \mathcal{O}_X((d-1) \Sigma) \cong f^*(\mathcal{N}^{\otimes pd}) \otimes \mathcal{O}_X(p(d-1) (T - f^* \mathcal{N})),$$ and thus $f^*\Omega^1_{C/K}((d-1)\Sigma)$ is divisible by $p$ in the Picard group of $X$. 
We define the $p$-th root line bundle $\mathcal{A}$ as
$$\mathcal{A} \coloneqq f^*(\mathcal{N}^{\otimes d}) \otimes   \mathcal{O}_X((d-1)(T - f^* \mathcal{N})) = f^*(\mathcal{N}) \otimes \mathcal{O}_X((d-1) T). $$
We now show that $\mathcal{A}$ does not satisfy Kodaira vanishing by producing a non-zero section $dz \in H^0(X,B^1_X \otimes \mathcal{A}^{-1})$.

\begin{proposition} \label{prop: foliationpclosed-kodaira}
The line bundle $\mathcal{A}$ is ample and it satisfies the following:
\begin{enumerate}
    \item there is a non-zero class $\alpha$ in $H^1(X, \mathcal{A}^{-1})$ such that $F_X^*\alpha=0$;
    \item the class $\alpha$ is induced by an element in $H^0(X, B^1_X \otimes \mathcal{A}^{-1})$. 
    The induced inclusion $\mathcal{A} \to B^1_X \subset F_*\Omega^1_X$ induces a $p$-closed foliation $\mathcal{A}^{\otimes p} \to \Omega^1_X \to \Omega^1_{X/K}$ which coincides with the saturation $f^*\Omega^1_{C/K}((d-1)\Sigma) \to \Omega^1_{X/K}$ of Proposition \ref{prop: saturation}. 
\end{enumerate}
\end{proposition}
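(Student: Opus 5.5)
First I prove ampleness with Nakai--Moishezon, or more directly by writing $\mathcal{A}=\mathcal{O}_X((d-1)T)\otimes f^*\mathcal{N}$. Since $T=\pi^{-1}(S)$ and $\pi$ is totally ramified along $S$, we have $\pi^*S=dT$. Hence $T^2=\frac1d S^2=\frac1d\deg\mathcal{L}=\deg\mathcal{N}>0$, consistent with $N_{T/X}\cong\mathcal{N}$. Then:
\begin{itemize}
\item $\mathcal{A}\cdot F=d-1>0$ for a fibre $F$ (using $d\ge2$);
\item $\mathcal{A}^2=(d-1)^2\deg\mathcal{N}+2(d-1)\deg\mathcal{N}>0$;
\item $\mathcal{A}\cdot T=d\deg\mathcal{N}>0$.
\end{itemize}
For an irreducible curve $D\neq T$ not contained in a fibre, $D\cdot T\ge0$ and $D\cdot f^*\mathcal{N}>0$. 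Curves inside fibres are whole fibres, since the fibres of $f$ are irreducible cuspidal curves. So $\mathcal{A}$ is ample.

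Next I construct the section of $B^1_X\otimes\mathcal{A}^{-1}$. I use the local description $z^d=s(y^p-\xi s^p)$ over $U\subset C$. On the chart $s=1$ the natural candidate is the exact form $dz$; the intended section is ``$dz$'' suitably twisted. Concretely, the tautological coordinate $z$ is a section of $\mathcal{M}=\mathcal{O}((p+1)/d)\otimes g^*\mathcal{N}^{-p}$ pulled back, so $dz$ is not a function differential globally. I would instead work with the form whose $p$-th power adjoint gives the saturated inclusion of Proposition \ref{prop: saturation}. Near the cusps $z^d=y^p+x$ gives $dx=dz^d=dz^{d-1}\,dz$ up to unit, i.e.\ $f^*dx = d\,z^{d-1}dz$. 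Thus $f^*\omega\cdot z^{-(d-1)}$ (with $\omega=d\xi$ the Tango form, $\omega\in H^0(B^1_C\otimes\mathcal{L}^{-1})$) should equal $d\cdot dz$ locally, which is exact.

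The plan is to show that the rational $1$-form $\eta\coloneqq f^*\omega$, viewed as a section of $F_*\Omega^1_X\otimes\mathcal{A}^{-1}$, lies in $B^1_X$. Here I interpret the twisting by $\mathcal{A}^{-1}=f^*\mathcal{N}^{-1}(-(d-1)T)$ via $p$-th powers: a section of $B^1_X\otimes\mathcal{A}^{-1}$ is a locally exact form $\eta$ with $\eta\in\Omega^1_X\otimes\mathcal{A}^{-p}$. The divisor computation is then: $f^*\omega$ vanishes along $f^*(\text{divisor of }\mathcal{L}^{p})$ and additionally to order $d-1$ along $\Sigma$ by Proposition \ref{prop: saturation}. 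Using $\Sigma\sim pT-f^*\mathcal{N}^{p}$, one gets $\mathcal{O}(\operatorname{div})=f^*\mathcal{N}^{dp}((d-1)\Sigma)=\mathcal{A}^{\otimes p}$. This requires care with the $T$-direction: along $T$, $z$ vanishes and $\pi^*$ of the coordinate $s$ is $z^d\cdot$unit, which must be accounted for. Local exactness holds because $\omega=d\xi$ locally and pullback commutes with $d$, so $f^*\omega=d(f^*\xi)\in B^1_X$.

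This yields a nonzero element of $H^0(X,B^1_X\otimes\mathcal{A}^{-1})$. Item (1) then follows from the sequence $0\to\mathcal{A}^{-1}\to F_*\mathcal{A}^{-p}\to B^1_X\otimes\mathcal{A}^{-1}\to0$ of Lemma \ref{lem: vanishing_criterion}, since $H^0(X,\mathcal{A}^{-p})=0$ by ampleness. The coboundary of the section is a nonzero $\alpha$ killed by $F_X^*$, by exactness of the long sequence.

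For item (2), the adjoint $\mathcal{A}^{\otimes p}\to\Omega^1_X$ sends the generator to $\eta=f^*\omega$ itself. Composing with $\Omega^1_X\to\Omega^1_{X/K}$ gives exactly the image of $f^*\Omega^1_{C/K}$ twisted, which has saturation $f^*\Omega^1_{C/K}((d-1)\Sigma)$. Both are line subbundles with the same isomorphism class $\mathcal{A}^{\otimes p}$, so the map is an isomorphism onto the saturation. Proposition \ref{prop: p-closed-foliations} then gives $p$-closedness.

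The main obstacle is the divisor bookkeeping identifying $\mathcal{O}_X(\operatorname{div} f^*\omega)$ with $\mathcal{A}^{\otimes p}$. The identity $f^*\Omega^1_{C/K}((d-1)\Sigma)\cong\mathcal{A}^{\otimes p}$ is already displayed before the statement, so I would lean on it directly. What remains is to check that the relative form $f^*\omega$, which vanishes exactly along $f^*\operatorname{div}\omega+(d-1)\Sigma$, has no other zeros, in particular along $T$. Away from $\Sigma$ the map $f$ is smooth, so this holds there.
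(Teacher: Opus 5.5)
Your ampleness check, the deduction of (1) from $0\to\mathcal{A}^{-1}\to F_*\mathcal{A}^{-p}\to B^1_X\otimes\mathcal{A}^{-1}\to 0$, and the divisor count $f^*\Omega^1_{C/K}((d-1)\Sigma)\cong\mathcal{A}^{\otimes p}$ are fine. The gap is in the step that carries the content of the proposition. You must show that the saturated generator, viewed as a section of $F_*\Omega^1_X\otimes\mathcal{A}^{-1}=F_*(\Omega^1_X\otimes\mathcal{A}^{-p})$, is locally exact. Your justification, $f^*\omega=d(f^*\xi)$, only shows that $f^*\omega$ is a section of $B^1_X\otimes f^*\mathcal{L}^{-1}$.

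To pass from the $f^*\mathcal{L}^{-p}$-twist to the $\mathcal{A}^{-p}$-twist you divide by the canonical section of $\mathcal{A}^{\otimes p}\otimes f^*\mathcal{L}^{-p}\cong\mathcal{O}_X((d-1)\Sigma)$. Its local equation off $T$ is $z^{d-1}$, which is not a $p$-th power (e.g.\ for $d=2$). The twist in $B^1_X\otimes(-)$ only absorbs $p$-th powers, because $\mathcal{O}_X$ acts on $B^1_X$ through Frobenius. Multiplying an exact form by a non-$p$-th power generally destroys exactness. So exactness of $f^*\omega$ says nothing by itself about $z^{-(d-1)}f^*\omega$, and this, not the divisor bookkeeping, is the real obstacle.

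The missing input is the identity you wrote down at the start and then set aside; it is exactly what the paper's proof rests on. In the actual chart $s\neq 0$ one has $z^d=y^p-\xi$. Since $d(y^p)=0$, this gives $f^*(d\xi)=-d\cdot z^{d-1}\,dz$, where the coefficient $d$ (the index) lies in $\mathbb{F}_p^{\times}$. Hence the saturated generator is, up to that constant, the exact form $dz$.

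This must be done in this Zariski chart, not in the \'etale normal form ``up to a unit''. A unit that is not a $p$-th power would spoil exactness just as $z^{d-1}$ does. For the same reason, the identification $\mathcal{A}^{\otimes p}\cong f^*\mathcal{L}^{\otimes p}((d-1)\Sigma)$ should be the one induced by $z$, not an arbitrary one: for $c\in K\setminus K^p$ the form $c\,dz$ is not even closed in $\Omega^1_X$.

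You then still have to control $T$. The paper checks in the other chart that $dz$ vanishes to order $p(d-1)$ along $T$, which matches the factor $\mathcal{O}_X((d-1)T)$ of $\mathcal{A}$ under the Frobenius-twisted module structure. Alternatively, $F_*\Omega^1_X/B^1_X$ is torsion-free, being an extension of $B^2_X\subset F_*\Omega^2_X$ by $Z^1_X/B^1_X\cong\Omega^1_X$. So a regular section of $F_*\Omega^1_X\otimes\mathcal{A}^{-1}$ that is exact on a dense open subset is exact everywhere, and regularity is what your divisor count provides. With this repaired, your argument for (2) goes through.
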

\begin{proof}
Ampleness follows from the Nakai--Moishezon criterion. We prove (2), as item (1) follows from (2) and Lemma \ref{lem: vanishing_criterion}. Let $\omega \in H^0(C, B^1_C \otimes \mathcal{N}^{-\otimes d}) \neq 0$ be the locally exact form given by the Tango structure. 
As $f$ is separable, the natural map 
$H^0(X, f^*B^1_C \otimes f^*\mathcal{N}^{-\otimes d}) \to H^0(X, B^1_X \otimes f^{*}\mathcal{N}^{-\otimes d})$
is injective and we consider 
$0 \neq f^* \omega \in H^0(X, B^1_X \otimes f^{*}\mathcal{N}^{-\otimes d})$.
Taking differentials, we have the relation $dz^{d-1}dz=f^*\omega$ in the chart where $(s\neq 0)$. By saturating as in Proposition \ref{prop: saturation} we note that $dz$ is regular in $U$ and looking at the other chart one sees it vanishes at order $p(d-1)$ along the section $T$. Therefore we deduce that $dz$ is the desired section in $\alpha \in H^0(X, B^1_X\otimes \mathcal{A}^{-1})$. The fact that $\ker(dz)=f^*\Omega^1_{C/K}((d-1)\Sigma)$ is $p$-closed follows from Proposition \ref{prop: p-closed-foliations}.
\end{proof}

\begin{remark} \label{rem: descent_ampleviolatingKodaira}
    Let $\widetilde{C}$ be a model of $C$ over $K^p$ and suppose that the Tango structure of index $d$ $(\mathcal{L}, \mathcal{N}, \alpha)$ on $C$ descends to $\widetilde{C}$. 
    Note that the line bundle $\mathcal{A}$ on the Raynaud surface $X$ descends to a line bundle $\widetilde{\mathcal{A}}$ on $\widetilde{X}$ over $K^p$ described in Lemma \ref{lem: descent_raynaud}.
\end{remark}

We now give an explicit description of the quotient $X / \mathcal{F}$.  
Note that, as the foliation is regular by construction, the quotient of the foliation is a smooth proper surface (and also projective). 
Moreover, by construction the quotient of the foliation fits in the natural commutative diagram 
\[\begin{tikzcd}
	X && {X/ \mathcal{F}} \\
	{\mathbb{P}_C({\mathcal{E}})} && {\mathbb{P}_C(F^*{\mathcal{E}}) \cong \mathbb{P}_C(\mathcal{O}_C \oplus \mathcal{L}^{\otimes p})}
	\arrow["q", from=1-1, to=1-3]
	\arrow["\pi", from=1-1, to=2-1]
	\arrow["\pi_{\mathcal{F}}", from=1-3, to=2-3]
	\arrow["{F_{\mathcal{E}}}", from=2-1, to=2-3].
\end{tikzcd}\]


\subsection{Applications to the Brauer--Manin set of Raynaud surfaces}

In order to apply Theorem \ref{thm: criterio_Kodaira_adelic_rational} to the study of the Brauer--Manin set for Raynaud surfaces, we need first to find sufficient conditions for  $\mathcal{A}^{\otimes p}$ to be globally generated.

\begin{proposition} \label{prop: bpf_lin_system}
If $\mathcal{N}^{\otimes p}$ is globally generated, then the line bundle $\mathcal{A}^{\otimes p}$ is globally generated.
If $C$ is not hyperelliptic and $d=2$, then the morphism $\psi_{\mathcal{A}^{\otimes p}}$ separates points and the it restricts to a purely inseparable morphism of degree $p$ on each fibre of $f$.
\end{proposition}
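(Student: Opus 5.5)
We have $\mathcal{A}^{\otimes p} = f^*\mathcal{N}^{\otimes p}\otimes \mathcal{O}_X(p(d-1)T)$. Using $\Sigma \sim pT - f^*\mathcal{N}^{\otimes p}$, this becomes
\[
\mathcal{A}^{\otimes p}\cong f^*\mathcal{N}^{\otimes p}\otimes\mathcal{O}_X((d-1)(\Sigma+f^*\mathcal{N}^{\otimes p})) = f^*\mathcal{N}^{\otimes pd}\otimes \mathcal{O}_X((d-1)\Sigma).
\]
Equivalently, $\mathcal{A}^{\otimes p}\cong f^*\omega_{C}((d-1)\Sigma)$. This matches the saturation in Proposition \ref{prop: saturation} and Proposition \ref{prop: foliationpclosed-kodaira}.

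\textbf{Global generation.} I would exhibit two kinds of sections. Pulling back sections of $\mathcal{N}^{\otimes p}$ and multiplying by $s_T^{p(d-1)}$ gives sections of $f^*\mathcal{N}^{\otimes p}\otimes\mathcal{O}_X(p(d-1)T)$ of the form $f^*u\cdot s_T^{p(d-1)}$. These have no base points outside $T$, because $\mathcal{N}^{\otimes p}$ is globally generated. Along $T$ we need further sections. Pulling back sections of $\omega_C=\mathcal{N}^{\otimes pd}$ and multiplying by the section $s_\Sigma^{d-1}$ of $\mathcal{O}_X((d-1)\Sigma)$ gives sections $f^*v\cdot s_\Sigma^{d-1}$. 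Since $\Sigma\cap T=\emptyset$, these do not vanish on $T$ provided $\omega_C$ is globally generated, which holds because $g(C)\ge 2$ (Tango forces $p\deg\mathcal{L}=2g-2>0$). So the union of the two families has no base points. Alternatively, $\mathcal{N}^{\otimes pd}=(\mathcal{N}^{\otimes p})^{\otimes d}$ is globally generated directly from the hypothesis, so the hypothesis suffices even without appealing to $\omega_C$.

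\textbf{The case $d=2$.} Now $\mathcal{A}^{\otimes p}=f^*\mathcal{N}^{\otimes p}(pT)$, and the fibres of $f$ are $d$-fold covers of $\mathbb{P}^1$ branched along $S+\Gamma$. For $d=2$ they are double covers with $2p_a(F)-2=p-3$. I would restrict the linear system to a fibre $F$. The sections $s_T^{p}f^*u$ restrict to multiples of $s_T^p$. The sections $s_\Sigma f^*v$ restrict, after composing with the local coordinate, to the function $z$ (up to scaling). In the local equation $z^2=s(y^p-\xi s^p)$, this means the linear system on $F$ contains $s_T^p$ together with $z$, and more generally $y^p$-type functions pulled back from $\mathbb{P}^1$. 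Since $y^p$ and $z$ are expressed through $p$-th powers together with $z$, I expect the restricted map on $F$ to be the composition of the Frobenius-type map $y\mapsto y^p$ with the double-cover structure. That map is purely inseparable of degree $p$ onto its image, and it separates points of $F$ set-theoretically. The concrete check is to write the fibre map as $[s^p: y^p : z]$ in suitable charts and verify that it is injective on $\bar K$-points with inseparable degree $p$.

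\textbf{Separating points in different fibres.} For points in different fibres, I need sections distinguishing them. It suffices that the subsystem $f^*H^0(\omega_C)\cdot s_\Sigma$ (which is $f^*|\omega_C|$ away from $\Sigma$), combined with the $s_T^p f^*H^0(\mathcal{N}^{\otimes p})$ part, separates points of $C$. Here I would use that $C$ is non-hyperelliptic, so that the canonical map $C\to\mathbb{P}^{g-1}$ is an embedding. Then two points in distinct fibres are separated by a section $f^*v\cdot s_\Sigma$, except when one of them lies on $\Sigma$. In that case I would use the other family, or combinations of the two, to handle it.

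\textbf{Main obstacle.} I expect the hardest step to be the fibre analysis: identifying precisely the image of $H^0(X,\mathcal{A}^{\otimes p})\to H^0(F,\mathcal{A}^{\otimes p}|_F)$ and showing that it yields an injective, degree-$p$ inseparable map. This includes the behaviour at the cusp point $\Sigma\cap F$ and at $T\cap F$. I would attack it first by pushing forward: compute $f_*\mathcal{A}^{\otimes p}$ as a direct sum of the form $\bigoplus_i \mathcal{N}^{\otimes(\cdot)}\cdot s^{i}y^{p-i}$ plus a $z$-part, using $\pi_*\mathcal{O}_X=\mathcal{O}\oplus\mathcal{M}^{-1}$.
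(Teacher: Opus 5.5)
Your identification $\mathcal{A}^{\otimes p}\cong f^*\omega_C((d-1)\Sigma)$ and your global generation argument match the paper's: the two families $s_T^{p(d-1)}f^*u$ and $s_\Sigma^{d-1}f^*v$ suffice because $T\cap\Sigma=\emptyset$. The $d=2$ part, however, already has a gap on a single fibre, and your description of the restricted system is wrong. Since $\pi^*S=2T$, the function $y/s$ has a double pole at $T\cap F$. Sections of $\mathcal{A}^{\otimes p}|_F\cong\mathcal{O}_F(pT)$ have pole order at most $p$ there. So $(y/s)^p$ is not available, the invariant part contributes only polynomials of degree $\le (p-1)/2$ in $y/s$, and $[s^p:y^p:z]$ is not the fibre map.

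The paper's idea removes your ``main obstacle'' without any pushforward computation. Pick $u,v$ with $u(c)\neq 0\neq v(c)$, and restrict only the pencil $V=\langle s_T^{p}f^*u,\ s_\Sigma f^*v\rangle$ to $F=f^{-1}(c)$. There it is base-point free, with members $p\,(T\cap F)$ and $\Sigma\cap F$, the latter being the cusp counted $p$ times. Hence the induced $F\to\mathbb{P}^1$ has degree $p$ and is totally ramified over $0$ and $\infty$. In a coordinate $w$ on the normalization $\mathbb{P}^1\to F$ (a bijection, since the cusp is unibranch) it is $w\mapsto\lambda w^p$, which is purely inseparable and injective on points. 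This map is $\psi_{\mathcal{A}^{\otimes p}}|_F$ followed by a linear projection, so $\psi_{\mathcal{A}^{\otimes p}}|_F$ is injective. The degree $p$ belongs to the pencil: if the invariant part restricts non-trivially to $F$, for example contributing $y/s$, then $\psi_{\mathcal{A}^{\otimes p}}|_F$ is birational onto its image. Only injectivity is needed.

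For points in different fibres, your projection onto the coordinates $s_\Sigma\cdot f^*H^0(C,\omega_C)$ equals $\psi_{\omega_C}\circ f$ on $X\setminus\Sigma$. This is a genuine improvement on the paper, whose proof only notes $\psi_{\mathcal{A}^{\otimes p}}|_T=\psi_{\omega_C}$ and then reduces to fibres; that reduction alone does not separate points on different fibres. Your projection also handles a pair with exactly one point on $\Sigma$: take $v$ not vanishing at the image of the other point.

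The case of two points $x_1,x_2\in\Sigma$ over $c_1\neq c_2$ is not closed by ``the other family, or combinations of the two''. Every $s_\Sigma f^*v$ vanishes on $\Sigma$, so on $\Sigma$ any combination reduces to $s_T^{p}f^*u$. These separate $x_1,x_2$ if and only if $|\mathcal{N}^{\otimes p}|$ separates $c_1,c_2$, and the hypothesis does not give this. $\mathcal{N}^{\otimes p}$ is only a globally generated theta characteristic; if it is a pencil, then $\psi_{\mathcal{N}^{\otimes p}}\colon C\to\mathbb{P}^1$ has degree $g-1>1$. To finish, you would have to control the restriction to $\Sigma$ of the remaining sections of $H^0(X,\mathcal{A}^{\otimes p})$, namely the invariant part (fibrewise polynomials of degree $\le (p-1)/2$ in $y/s$). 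Neither your plan nor the paper's sketch does this.
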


\begin{proof}
    We denote by $N$ an effective divisor such that $\mathcal{O}_C(N) \cong \mathcal{N}$, and recall we have $pA \sim p(d-1)T+f^*pN \sim (d-1)\Sigma+ f^*pdN$. 
    To prove the statement, we can base change to the algebraic closure of $K$. 
    For the global generation, we show that for every closed point $x \in X$ there exists a section $s \in H^0(X, \mathcal{A}^{\otimes p})$ not vanishing at $x$.
    As $f^*pN$ is globally generated and $T \cap \Sigma= \emptyset$, this is immediate.
    
    As the restriction $\psi_{\mathcal{A}^{\otimes p}}|_T$ coincides with $\psi_{pdN}=\psi_C$ is an embedding as $C$ is not hyperelliptic, to show the latter statement it is sufficient to check that for every closed fibre $F$, the morphism $\psi_{\mathcal{A}^{\otimes p}}|_F$ is purely inseparable.
    
    As $pN$ is globally generated, we can suppose that $f(x) \notin pN$.
    Consider the two dimensional subspace $V$ of $H^0(X, \mathcal{A}^{\otimes p})$ generated by $f^*pN+ p(d-1)T$ and $f^*pdN+(d-1)\Sigma$.
    Let $V_F$ be the image of $V$ by the natural projection morphism $H^0(X, \mathcal{A}^{\otimes p}) \to H^0(F, \mathcal{A}^{\otimes p})$.
    As $\Sigma \cap T =\emptyset$, the (non-complete) linear system $V_F$ is basepoint free.
    Let $\psi_{V_F} \colon F \to \mathbb{P}(V)\simeq \mathbb{P}^1$ be the morphism of degree $p(d-1)$ to $\mathbb{P}^1$. So the degree of $\psi_{V_F}$ is $p$ as $d=2$. In this case, $\psi_{V_F}$ has a zero and a pole of multiplicity $p$, hence it is purely inseparable.
\end{proof}

To show that the conditions of Proposition \ref{prop: bpf_lin_system} are non-empty, we now construct Tango structures of index $d=2$ satisfying those. 

\begin{proposition} \label{prop: examples of good tango structures}
Suppose $p>2$. 
Then there exist non-hyperelliptic curves $C$ with Tango structures $(\mathcal{L}, \mathcal{N}, \alpha)$ of index $2$ (so $\omega_C  \cong \mathcal{N}^{\otimes 2p}$) and theta characteristic $\mathcal{N}^{\otimes p}$ globally generated. 
\end{proposition}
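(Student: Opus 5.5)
We work with the planar Tango curves of the example of \cite[Example 1.3]{Muk13} with $d=2$:
\[
C=\{Y^{2p}-YZ^{2p-1}-ZX^{2p-1}=0\}\subset \mathbb{P}^2_K,\qquad Q=[1:0:0].
\]
This is a smooth plane curve of degree $2p\ge 6$ (as $p>2$), so it is not hyperelliptic. Indeed, smooth plane curves of degree $\ge 4$ are never hyperelliptic, since their canonical series $\mathcal{O}_C(2p-3)$ is very ample.

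The example shows that $dx$ is an exact global $1$-form with divisor $2p(2p-3)Q$. Hence $\mathcal{L}=\mathcal{O}_C(2(2p-3)Q)$ carries a Tango structure, with $\mathcal{N}=\mathcal{O}_C((2p-3)Q)$ and $\mathcal{N}^{\otimes 2}\cong\mathcal{L}$. Adjunction gives
\[
\omega_C\cong\mathcal{O}_C(2p-3)\cong\mathcal{O}_C(2p(2p-3)Q),
\]
using that the line $Z=0$ cuts out $2pQ$. Therefore $\omega_C\cong\mathcal{N}^{\otimes 2p}$, and $\mathcal{N}^{\otimes p}$ is a theta characteristic. The remaining task is global generation of $\mathcal{N}^{\otimes p}=\mathcal{O}_C(p(2p-3)Q)$.

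The key observation is that $2pQ=C\cap\{Z=0\}$ is a hyperplane section. Write $p(2p-3)=2p\cdot m+r$ with $m=\lfloor (2p-3)/2\rfloor=p-2$ and $r=p$, since $p$ is odd. Then
\[
\mathcal{N}^{\otimes p}\cong\mathcal{O}_C(p-2)\otimes\mathcal{O}_C(pQ).
\]
So $\mathcal{N}^{\otimes p}$ is not simply a multiple of the hyperplane class, and this is the main obstacle. If we instead chose $\mathcal{N}=\mathcal{O}_C(2p-3)|$ restricted from the plane, its square would be $\mathcal{O}_C(4p-6)\neq\mathcal{L}$, so that is not an option. The plan is therefore to show directly that $|p(2p-3)Q|$ has no base points.

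Away from $Q$ this is automatic. The section $Z^{p-2}\cdot(\text{section with divisor } pQ)$ is not literally available, but the ideal of $pQ$ already gives what we need: the space $H^0(\mathcal{O}_C(p-1))$ contains $Z^{p-1}$, whose divisor is $2p(p-1)Q\ge p(2p-3)Q$. Twisting down shows that $\mathcal{O}_C(p(2p-3)Q)$ has a section vanishing only at $Q$. More precisely, $2p(p-1)Q-p(2p-3)Q=pQ$ is effective, and it would be the divisor of a section of $\mathcal{O}_C(p(2p-3)Q)$ exactly when $\mathcal{N}^{\otimes p}\cong\mathcal{O}_C(p-1)(-pQ)$. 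So the only possible base point is $Q$.

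At $Q$ we need a rational function with pole of order exactly $p(2p-3)$ at $Q$ and regular elsewhere. We look for it in the affine coordinate ring $K[x,y]/(y^{2p}-y-x^{2p-1})$ of $C\setminus Q$, where $\mathrm{ord}_Q(x)=-2p$ and $\mathrm{ord}_Q(y)=-(2p-1)$. The Weierstrass semigroup at $Q$ therefore contains $\langle 2p,2p-1\rangle$. Since $p(2p-3)=2p a+(2p-1)b$ is solvable with $a,b\ge 0$ (for example $b=p$ and $2pa=2p^2-3p-2p^2+p=-2p$ fails, so we try other $b$; since the semigroup has conductor $(2p-1)(2p-2)$, we adjust and verify numerically), a monomial $x^ay^b$ gives the required section. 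If $p(2p-3)$ lies in a gap, we fall back on the supersingular-elliptic-cover construction of the second example with $d=2$. There $\mathcal{N}=\mathcal{O}_C(\sum\widetilde q_i)$ is the pullback-type divisor $\pi^{-1}$ of a very ample class, and global generation of $\mathcal{N}^{\otimes p}$ follows from that of its pullback. Checking non-hyperellipticity of that cyclic cover is the step I expect to require real work.
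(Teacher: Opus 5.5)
\textbf{Gap in the main route.} Your plane curve is non-hyperelliptic, carries a Tango structure of index $2$, and satisfies $\omega_C\cong\mathcal{N}^{\otimes 2p}$. Reducing to the point $Q$ is also fine, simply because $p(2p-3)Q$ is itself an effective member of the linear system. The failure is at $Q$: for $\mathcal{N}=\mathcal{O}_C((2p-3)Q)$, the point $Q$ \emph{is} a base point of $|\mathcal{N}^{\otimes p}|$.

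Indeed, $C\setminus Q=\Spec K[x,y]/(y^{2p}-y-x^{2p-1})$ has $K$-basis $x^ay^b$ with $a\ge 0$ and $0\le b\le 2p-1$. These have pairwise distinct pole orders $2pa+(2p-1)b$ at $Q$. So the Weierstrass semigroup at $Q$ is exactly $\langle 2p-1,2p\rangle$, and $Q$ is a base point of $|nQ|$ iff $n$ is a gap. An integer $n$ lies in $\langle 2p-1,2p\rangle$ iff $(2p-1)k\le n\le 2pk$ for some $k\ge 0$.

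Take $n=p(2p-3)=(2p-1)(p-1)-1=g-1$. Every $k\le p-2$ gives $2pk<n$, and every $k\ge p-1$ gives $(2p-1)k>n$, so $n$ is a gap. The same conclusion follows directly: sections of $\mathcal{O}_C(p-2)(pQ)\subset\mathcal{O}_C(p-1)$ are plane forms of degree $p-1$. Their vanishing orders at $Q$ along $C$ have the form $i+2pj$ with $i+j\le p-1$, which is never exactly $p$.

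So your step ``adjust and verify numerically'' cannot succeed. Rescuing the plane curve would require twisting $\mathcal{N}$ by some $\eta\in\Pic(C)[2]$ and a new argument. The fallback is therefore the entire proof, and as written its decisive step is missing.

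\textbf{Completing the fallback.} It can be completed. The tautological section $t$ of $\pi^*\mathcal{A}$ with $t^{2p+1}=\pi^*s$ has divisor $\sum\widetilde q_i$, so $\mathcal{N}\cong\pi^*\mathcal{A}$. Hence $\mathcal{N}^{\otimes p}\cong\pi^*\mathcal{A}^{\otimes p}$ is globally generated, because $\deg\mathcal{A}^{\otimes p}=pa\ge 3$ on a genus-one curve; very ampleness of $\mathcal{A}$ plays no role.

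For non-hyperellipticity, use Castelnuovo--Severi. Since $\gcd(2,2p+1)=1$, a degree-$2$ map $C\to\mathbb{P}^1$ together with $\pi$ would force $g_C\le (2p+1)+(2-1)\cdot 2p=4p+1$. On the other hand, Riemann--Hurwitz, with tame total ramification at the $(2p+1)a$ points over $Z(s)$, gives $g_C=1+p(2p+1)a\ge 2p^2+p+1>4p+1$.

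\textbf{Comparison with the paper.} With these additions you would have a correct proof, by a route different from the paper's. The paper:
\begin{enumerate}
\item starts from a hyperelliptic Tango curve $D$ of odd genus;
\item takes the globally generated theta characteristic pulled back from $\mathcal{O}_{\mathbb{P}^1}(\frac{g-1}{2})$ along the $g^1_2$;
\item passes to a non-hyperelliptic \'etale cover;
\item chooses the square root $\mathcal{N}$ of the pulled-back $\mathcal{L}$ whose $p$-th power is that theta characteristic, using that $\mathcal{N}\mapsto\mathcal{N}^{\otimes p}$ is bijective on square roots when $p$ is odd.
\end{enumerate}
Your elliptic-cover route makes $\mathcal{N}^{\otimes p}$ a pullback on the nose and gets non-hyperellipticity from an explicit inequality, avoiding the $2$-torsion bookkeeping. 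The price is that you need a supersingular elliptic curve and a reduced member of $|\mathcal{A}^{\otimes(2p+1)}|$ over the base field.
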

\begin{proof}
Let $D$ be an hyperelliptic curve of odd genus with a Tango structure $(\mathcal{L}, \alpha)$ (such curves exist by \cite{TY02}).
If $\psi_{\omega_X} \colon D \to \mathbb{P}^1_K \hookrightarrow \mathbb{P}^{g-1}_K$ is the canonical map, we have $\psi^*\mathcal{O}_{\mathbb{P}^1_K}(g-1)=\omega_X$. As $g$ is odd, we consider the globally generated theta characteristic $\theta_D \coloneqq \psi^*\mathcal{O}_{\mathbb{P}^1}(\frac{g-1}{2})$.
Let $\pi \colon C \to D$ be a sufficiently high \'etale cover such that $C$ is not hyperelliptic.
As $\pi$ is \'etale, we have that $\omega_C \cong \pi^*\omega_D=\pi^*\mathcal{L}^{\otimes p}$ and that $\theta \coloneqq \pi^*\theta_D$ is a theta characteristic of $C$ and $\pi^*\mathcal{L} \to \pi^*B_D^1 \cong B_C^1$ is a non-zero morphism. 
In particular, $(\pi^*\mathcal{L}, \pi^*\alpha)$ is a Tango structure on $C$.
Let $\mathcal{N}$ be a line bundle such that $\mathcal{N}^{\otimes 2} \cong \mathcal{L}$ such that $\mathcal{N}^{\otimes p} \cong \theta$. Note that this is possible because the set of roots of $\mathcal{L}$ is in bijection with the set of roots of $\omega_X$ via raising to the $p$-th power as $p \neq 2$.
Then $(\mathcal{N}, \pi^*\alpha)$ is a Tango structure of index $2$ on $C$ where the theta characteristic $\mathcal{N}^{\otimes p}$ is globally generated.
\end{proof}

We finally apply Theorem \ref{thm: criterio_Kodaira_adelic_rational} to Raynaud surfaces.

\begin{theorem} \label{main thm raynaud end paper}
    Let $K$ be a global field of characteristic $p>0$. 
    Let $X$ be the Raynaud surface associated to a Tango structure $(\mathcal{L}, \mathcal{N}, \alpha)$ of index $2$ on a curve $C$  and theta characteristic $\mathcal{N}^{\otimes p}$ globally generated.
    Suppose that the curve together with the Tango structure descends to $K^p$, giving a model $(\widetilde{X}, \widetilde{\mathcal{F}})$ of $(X, \mathcal{F})$.
    If $\phi \colon Y \coloneqq \widetilde{X}/\widetilde{\mathcal{F}} \to X$ denotes the natural morphism, then we have
    $$(X(\mathbf{A}_K) \setminus \phi(Y(\mathbf{A}_K)))^{\Br(X)[p]} =  X(K) \setminus \phi(Y(K)).$$
\end{theorem}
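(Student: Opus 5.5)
The plan is to deduce the statement from Theorem \ref{thm: criterio_Kodaira_adelic_rational}, case (b) and item (ii). This requires checking three things: the hypotheses of case (b) for the line bundle $\mathcal{A} = f^*\mathcal{N}\otimes\mathcal{O}_X((d-1)T)$ with $d=2$; that the foliation there is the $\mathcal{F}$ of the statement; and that $\mathcal{A}^{\otimes p}$ separates points.

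\emph{Descent.} By Lemma \ref{lem: descent_raynaud}, the Raynaud surface $X$ descends to a Raynaud surface $\widetilde{X}$ built from the descended Tango structure $(\widetilde{\mathcal{L}},\widetilde{\mathcal{N}},\widetilde{\alpha})$. By Remark \ref{rem: descent_ampleviolatingKodaira}, $\mathcal{A}$ descends to $\widetilde{\mathcal{A}}$ on $\widetilde{X}$. The section $dz \in H^0(\widetilde{X},B^1_{\widetilde{X}}\otimes\widetilde{\mathcal{A}}^{-1})$ of Proposition \ref{prop: foliationpclosed-kodaira} is constructed on $\widetilde{X}$ by the same local recipe. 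It therefore gives $\widetilde{\mathcal{F}} = \ker(\widetilde{\mathcal{A}}^{\otimes p}\to\Omega^1_{\widetilde{X}/K})$. Its base change along $F_K$ is the foliation $\mathcal{F}=\ker(\mathcal{A}^{\otimes p})$, which by Proposition \ref{prop: foliationpclosed-kodaira} is the saturation $f^*\Omega^1_{C/K}((d-1)\Sigma)$. To see that $\mathcal{F}$ descends to $\widetilde{\mathcal{F}}$, I would use Lemma \ref{lem: explicit_description_descent_der} locally: the generating derivation annihilating $dz$ has coefficients coming from $\widetilde{X}$. Proposition \ref{prop: saturation} shows the quotient is locally free, so the foliation is regular, and Proposition \ref{prop: factorization_frob} yields $\phi\colon Y\to X$.

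\emph{Hypotheses of (b).} $\mathcal{A}$ is ample and $H^0(X,B^1_X\otimes\mathcal{A}^{-1})\neq0$ by Proposition \ref{prop: foliationpclosed-kodaira}. The theta characteristic $\mathcal{N}^{\otimes p}$ is assumed globally generated, so $\mathcal{A}^{\otimes p}$ is globally generated by Proposition \ref{prop: bpf_lin_system}. This is also what Theorem \ref{prop: equivalence-star-condition} needs: the annihilator $\mathcal{C}$ of $\mathcal{F}$ in $\Omega^1_{X/K}$ is exactly the image of $\mathcal{A}^{\otimes p}$, which is saturated by Proposition \ref{prop: saturation}.

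\emph{Separation of points.} Proposition \ref{prop: bpf_lin_system} asks that $C$ be non-hyperelliptic. I expect the main obstacle here, because the statement assumes only that $\mathcal{N}^{\otimes p}$ is globally generated and $d=2$; non-hyperellipticity is not explicitly assumed. I would first try to show it is automatic. If $C$ were hyperelliptic, its canonical system would factor through $\mathbb{P}^1$. I would then compare with the degree constraint $\omega_C\cong\mathcal{N}^{\otimes 2p}$ and the global generation of the theta characteristic $\mathcal{N}^{\otimes p}$ to derive a contradiction. Failing that, I would add the hypothesis, as in Proposition \ref{prop: examples of good tango structures}, or argue directly. Two points on the same fibre are separated because $\psi_{\mathcal{A}^{\otimes p}}|_F$ is purely inseparable, hence injective on points. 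Points on different fibres are separated because composing with the projection recovers $\psi_C$ on $T$, using sections pulled back from $|pdN|$.

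\emph{Conclusion.} With all hypotheses verified, item (ii) of Theorem \ref{thm: criterio_Kodaira_adelic_rational} gives
$$(X(\mathbf{A}_K)\setminus\phi(Y(\mathbf{A}_K)))^{\Br(X)[p]} = X(K)\setminus\phi(Y(K)).$$
The reverse inclusion is immediate: rational points lie in the Brauer--Manin set, and a point of $X(K)$ not in $\phi(Y(K))$ does not lift to $Y(\mathbf{A}_K)$. For the latter, a lift exists over some $L$ precisely when condition (2) of Theorem \ref{prop: equivalence-star-condition} holds, and that condition is independent of the extension $L$.
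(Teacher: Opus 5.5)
Your route is the paper's. Its proof of Theorem \ref{main thm raynaud end paper} simply invokes Proposition \ref{prop: bpf_lin_system}, for global generation and separation of points of $\mathcal{A}^{\otimes p}$, and then Theorem \ref{thm: criterio_Kodaira_adelic_rational}. Your extra checks are fine: descent of $\mathcal{A}$ and of the foliation, regularity via Proposition \ref{prop: saturation}, $\mathcal{C}\cong\mathcal{A}^{\otimes p}$, and the easy inclusion. You are also right that the separation part of Proposition \ref{prop: bpf_lin_system} needs $C$ non-hyperelliptic. That hypothesis is missing from the statement, and the paper uses it tacitly.

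\textbf{Your first plan for this step fails.} You cannot derive non-hyperellipticity from the other hypotheses. Take the odd-genus hyperelliptic curve $D$ at the start of the proof of Proposition \ref{prop: examples of good tango structures}. The same bijection of square roots via $p$-th powers, applied on $D$ itself, gives it an index-$2$ Tango structure whose theta characteristic $\mathcal{N}^{\otimes p}=\theta_D$ is globally generated. So nothing in the hypotheses excludes hyperelliptic curves.

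\textbf{For hyperelliptic $C$, separation genuinely fails.} We have $\mathcal{A}^{\otimes p}|_T\cong\mathcal{N}^{\otimes p}\otimes N_{T/X}^{\otimes p}\cong\mathcal{N}^{\otimes 2p}\cong\omega_C$. Hence every section of $\mathcal{A}^{\otimes p}$ restricts on $T\cong C$ to a canonical section, and all of these are pulled back from $\mathbb{P}^1$ along the double cover. Therefore $\psi_{\mathcal{A}^{\otimes p}}$ identifies the points of $T$ over $c$ and over $\iota(c)$, where $\iota$ is the hyperelliptic involution. Item (ii) of Theorem \ref{thm: criterio_Kodaira_adelic_rational} then cannot be invoked. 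Your ``direct'' argument for points in different fibres also relies on injectivity of the canonical map $\psi_C$, so it does not avoid the issue.

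The only viable fix is your second option: add non-hyperellipticity of $C$ to the hypotheses. The sentence preceding Proposition \ref{prop: examples of good tango structures} indicates this is intended. With that hypothesis, your argument coincides with the paper's.
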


\begin{proof}
    By Proposition \ref{prop: bpf_lin_system}, the line bundle $\mathcal{A}^{\otimes p}$ is globally generated and $\psi_{\mathcal{A}^{\otimes p}}$ separates points.
    The result  follows from Theorem \ref{thm: criterio_Kodaira_adelic_rational} to conclude. 
\end{proof}

\bibliographystyle{amsalpha}
\bibliography{refs}

\end{document}